\newtheorem{theorem}{Theorem}[section]
\newtheorem{corollary}[theorem]{Corollary}
\newtheorem{lemma}[theorem]{Lemma}
\newtheorem{proposition}[theorem]{Proposition}
\newtheorem{claim}[theorem]{Claim}
\newtheorem{vthm}{Theorem}
\theoremstyle{definition}
\newtheorem{definition}[theorem]{Definition}
\newtheorem*{remark}{Remark}
\newtheorem{refrem}[theorem]{Remark}
\renewcommand{\limsup}{\varlimsup}
\renewcommand{\emptyset}{\varnothing}
\newcommand{\restrict}[2]{{#1}{\restriction_{ #2}}}
\newcommand{\restrictThm}[2]{{#1}\! \!  \restriction_{\! #2}}
\newcommand{\R}{\mathbb {R}}
\newcommand{\Z}{\mathbb {Z}}
\newcommand{\N}{\mathbb {N}}
\newcommand{\T}{\mathbb {T}}
\newcommand{\inv}{^{-1}}
\newcommand{\orbit}{\mathcal {O}}
\newcommand{\Rec}{\mathcal{R}}
\newcommand{\Mat}{\mathrm{Mat}}
\newcommand{\Fols}{\mathcal{W}^s}
\newcommand{\stab}[1]{W^s({#1})}
\newcommand{\unst}[1]{W^u({#1})}
\newcommand{\locStab}[2][\epsilon]{W^s_{#1}(#2)}
\newcommand{\locUnst}[2][\epsilon]{W^u_{#1}(#2)}
\renewcommand{\int}{\mathrm{int}}
\newcommand{\diam}{\mathrm{diam}}
\newcommand{\NW}{\mathrm{NW}}
\newcommand{\supp}{\mathrm{supp}}
\newcommand{\td}{\tilde}
\newcommand{\wtd}{\widetilde}
\newcommand{\sol}{\mathcal S}
\title[Nonexpanding Attractors]{Nonexpanding Attractors:  Conjugacy to Algebraic Models and Classification in 3-Manifolds}
\subjclass{Primary: 37C70, 37C15; Secondary: 37D20.}
 \keywords{Hyperbolic attractors, solenoids, conjugacy, classification}
\author[A. W. Brown]{Aaron W. Brown}
\address{Department of Mathematics, Tufts University, Medford, MA 02155}
 \email{aaron.brown@tufts.edu}
\begin{document}




\begin{abstract}
We prove a result motivated  by Williams's classification of expanding attractors  and the Franks-Newhouse Theorem on co\-dimen\-sion-1  Anosov diffeomorphisms: If $\Lambda$ is a topologically mixing hyperbolic attractor with $\dim \restrict{E^u}{\Lambda} = 1$ then either $\Lambda$ is expanding or is homeomorphic to a compact abelian group (a \emph{toral solen\-oid}); in the latter case $\restrict f \Lambda$ is conjugate to a group automorphism.  
As a corollary we obtain a classification of all 2-dimensional basic sets in 3-man\-ifolds.  Furthermore we classify all topologically mixing hyperbolic attractors in 3-man\-ifolds in terms of the classically studied examples, answering a question of Bonatti in \cite{BonattiEmail}.  
\end{abstract}

\maketitle

\section{Introduction}
In the study of hyperbolic dynamics, a major theme is that strong dynamical hypotheses impose a conjugacy between an abstract  dynamical system and an algebraic, or at least highly structured, model.  For instance, results of Franks and Manning established that every Anosov diffeomorphism of an infranil-man\-ifold is conjugate to a hyperbolic infranil-automorphism  \cite[Theorem C ]{Manning:1973p11377}.  Among of the oldest conjectures in modern dynamics is the hypothesis that every Anosov diffeomorphism is conjugate to a hyperbolic infranil-automorphism.  A partial result towards this conjecture was obtained by Franks and Newhouse for co\-dimen\-sion-1 Anosov systems.  Recall an Anosov diffeomorphism is called \emph{co\-dimen\-sion-1} if $\dim (E^\sigma) = 1$ for some $\sigma \in \{s, u\}$.
\begin{vthm}[Franks-Newhouse \cite{MR0271990}, \cite{MR0277004}; see also \cite{MR1836432}]\label{thm:fn}
Let $f\colon M\to M$ be a co\-dimen\-sion-1 Anosov diffeomorphism.  Then $M$ is homeomorphic to a torus, and $f$ is conjugate to a hyperbolic toral automorphism.  
\end{vthm}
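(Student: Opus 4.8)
The plan is to follow the route of Franks and Newhouse: promote the dynamical hypothesis to rigid topological information — in the end, $M \cong \T^n$ with $n = \dim M$ — and then use the rigidity of Anosov diffeomorphisms on the torus to produce the conjugacy. After replacing $f$ by $f\inv$ if necessary we may assume $\dim E^u = 1$, so the unstable foliation $\mathcal W^u$ is a $1$-dimensional foliation by lines and the stable foliation $\Fols$ is a codimension-one foliation of $M$; the case $n \le 2$ is classical, so assume $n \ge 3$. The first point to establish is $\NW(f) = M$, which is Newhouse's contribution: otherwise the spectral decomposition of $\NW(f)$ contains a proper attractor $\Lambda$, which as a piece of a codimension-one Anosov diffeomorphism is a $1$-dimensional expanding attractor in $M$, and a winding/intersection-number argument inside $\Fols$ shows the stable set of such a $\Lambda$ would have to exhaust $M$ — so $\Lambda$ is not proper, a contradiction. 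One then checks $f$ is topologically transitive, indeed topologically mixing, so that every leaf of $\mathcal W^u$ and every leaf of $\Fols$ is dense in $M$.

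Next is the topological core. Stable and unstable manifolds of an Anosov diffeomorphism are injectively immersed Euclidean spaces, so every leaf of the codimension-one foliation $\Fols$ is simply connected, diffeomorphic to $\R^{n-1}$; lifting to the universal cover $p\colon \td M \to M$, every leaf of the lifted foliation $\wtd{\mathcal W}^s$ is again a plane. By Palmeira's theorem on open manifolds foliated by planes (valid for $n \ge 3$), $\td M$ is diffeomorphic to $\R^n$; hence $M$ is closed and aspherical and $\Gamma := \pi_1(M)$ acts freely and cocompactly on $\R^n$. Density of the lines of $\mathcal W^u$ forces $\Fols$ to have no Reeb components and no compact leaves, and makes the leaf space $L$ of $\wtd{\mathcal W}^s$ Hausdorff; being a simply connected Hausdorff $1$-manifold, $L \cong \R$. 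Each lifted unstable line is everywhere transverse to $\wtd{\mathcal W}^s$, hence injects monotonically into $L$, and density makes it surject, so $\td M \cong L \times \R^{n-1} \cong \R^n$ carries a global product structure in which $\wtd{\mathcal W}^s$ is the family of plane fibers.

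The group $\Gamma$ then acts on $L \cong \R$ by homeomorphisms, and a chosen lift $\td f$ acts on $L$ as well, as a topological expansion (the transverse direction $E^u$ is expanded) with $\td f \circ \gamma = f_*(\gamma) \circ \td f$; combining this with the cocompact free $\Gamma$-action on $\R^n = L \times \R^{n-1}$ and the absence of Reeb components one extracts $\Gamma \cong \Z^n$, and — tracking orientations, so that $b_1(M) = n$ — that $M$ is homeomorphic to $\T^n$, not merely to a flat manifold. The induced matrix $A := f_* \in \mathrm{GL}(n,\Z)$ has no eigenvalue on the unit circle, since a neutral direction of $A$ would be incompatible with the uniform hyperbolicity of the lift $\td f\colon \R^n \to \R^n$ compared against its linearization $A$. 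Finally, by Franks's theorem — the toral case of Manning's Theorem~C quoted above — an Anosov diffeomorphism of $\T^n$ inducing the hyperbolic matrix $A$ on $\pi_1$ is topologically conjugate to $A$: the conjugacy $h$ is the unique continuous map homotopic to the identity with $h \circ f = A \circ h$, obtained by a limiting argument on lifted orbits, injectivity following from $\NW(f) = M$ together with expansiveness. This proves the theorem.

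I expect the real obstacle to be the topological middle step, turning ``codimension-one Anosov'' into ``$M \cong \T^n$'': it breaks into Newhouse's exclusion of a proper codimension-one attractor (hence $\NW(f) = M$ and transitivity), Palmeira's planes-foliation theorem (or an equivalent direct argument that $\td M \cong \R^n$), and — the subtlest part — the analysis of the $\Gamma$-action on the \emph{a priori} non-Hausdorff leaf space of the lifted codimension-one foliation, where one must use density of the unstable lines to force Hausdorffness and rule out Reeb components, and then upgrade ``$\Gamma$ virtually $\Z^n$'' to ``$\Gamma \cong \Z^n$'' and ``$M \cong \T^n$''. Once $M \cong \T^n$ is known, building the conjugacy $h$ is the classical Franks construction and is comparatively routine.
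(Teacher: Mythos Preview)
The paper does not prove this statement. Theorem~I is quoted in the introduction as a classical background result, with citations to Franks, Newhouse, and Hiraide; it is invoked once, in the proof of Theorem~\ref{thm:HA3D}, but never argued. So there is no ``paper's own proof'' to compare your attempt against.

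Your outline is a reasonable account of the classical route (Newhouse's $\NW(f)=M$, then a foliation-theoretic identification of $\wtd M$ with $\R^n$ and of $\pi_1(M)$ with $\Z^n$, then the Franks--Manning conjugacy). If anything, it is worth noting that the Hiraide reference the paper points to takes a rather different tack, closer in spirit to the present paper: Hiraide builds the transverse measure via Ruelle--Sullivan currents and runs a global shadowing argument directly, bypassing much of the foliation-topology machinery (Palmeira, Reeb components, leaf-space analysis) that you invoke. Your sketch is closer to the original Franks--Newhouse line than to Hiraide's simplification, and the invocation of Palmeira is anachronistic relative to the 1970 papers, though it is a standard modern shortcut.
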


Outside the realm of global hyperbolicity, that is, when dealing with  proper hyperbolic subsets $\Lambda\subset M$,  
one often sees dynamics which is not conjugate to any algebraic system.  
However, in the case of expanding attractors, Williams showed in \cite{MR0348794} that the restricted dynamics $\restrict f \Lambda$ is conjugate to the shift map on a generalized solen\-oid.  Recall that by an \emph{expanding attractor} we mean a hyperbolic attractor $\Lambda$ such that $\dim(\Lambda) = \dim (\restrict {E^u} \Lambda)$. 
Also  by a \emph{generalized solen\-oid} (or $n$-solen\-oid) we mean a topological space $N$ (which Williams takes to be a  \emph
{branched $n$-man\-ifold}), and a surjective map $g\colon N \to N$, and define the generalized solen\-oid to be the inverse limit 
\[\varprojlim(N,g) :=  \varprojlim\{N \xleftarrow{g} N \xleftarrow{g} N \xleftarrow{g} \dots \}\]
with the natural shift map $\sigma$.  (See Section \ref{sec:AS} for the construction of the inverse limit in a more specific setting adapted to our problem.)   
\begin{vthm}[{\cite[Theorem A]{MR0348794}}]\label{thm:Williams}

Assume $\Lambda$ is an $n$-dimensional expanding attractor for $f\in \mathrm{Diff}(M)$. 
Then $\restrict f \Lambda$ is conjugate to the shift map of an $n$-solen\-oid.\end{vthm}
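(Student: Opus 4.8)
The plan is to realize $\restrict f \Lambda$ directly as an inverse limit, building the branched manifold $N$ as the quotient of $\Lambda$ obtained by collapsing stable sets. I would start from the hyperbolic structure: as an attractor, $\Lambda$ is a union of full unstable manifolds, and since $\dim \Lambda = \dim \restrict{E^u}{\Lambda}$ the slice $\Lambda \cap \locStab{x}$ is totally disconnected for each $x\in\Lambda$. Combined with the local product structure on the basic set, this shows that $\Lambda$ is locally homeomorphic to (open $n$-disk) $\times$ (Cantor set), so that $\Lambda$ is, in each Markov rectangle, a lamination by $n$-dimensional unstable plaques over a Cantor transversal.

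Next I would fix a Markov partition $\{R_1,\dots,R_m\}$ of $\Lambda$ (classical, via Bowen) and introduce the equivalence relation $x\sim y \iff d(f^kx,f^ky)\to 0$ as $k\to\infty$, i.e.\ $y\in\stab x$. Set $N := \Lambda/{\sim}$ with quotient map $\pi\colon\Lambda\to N$. One checks that $\pi$ is injective on each local unstable disk and that the images $\pi(R_i)$ assemble into charts making $N$ a branched $n$-manifold: away from a lower-dimensional branch set, where unstable sheets of distinct $R_i$ get identified, $N$ is a topological $n$-manifold, and along the branch set finitely many sheets meet. Because $f$ carries stable sets into stable sets it descends to a continuous surjection $g\colon N\to N$, and because $f$ expands the unstable direction $g$ is a flattening local homeomorphism satisfying Williams's axioms; thus $\varprojlim(N,g)$ with its shift $\sigma$ is an $n$-solen\-oid.

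Then I would define $h\colon\Lambda\to\varprojlim(N,g)$ by $h(x) = (\pi(x),\pi(f\inv x),\pi(f^{-2}x),\dots)$. Equivariance $h\circ f = \sigma\circ h$ and continuity are immediate from the definitions. Injectivity is where expansivity enters: if $h(x)=h(y)$ then $f^{-k}x$ and $f^{-k}y$ share a stable set for every $k\ge 0$, which — by expansivity of $\restrict f \Lambda$, or equivalently by tracking the magnification of unstable separation under backward iteration — forces $x=y$. For surjectivity I would use the Markov property to show that an admissible backward sequence in $N$ pulls back to a nested sequence of nonempty compact rectangles in $\Lambda$ whose intersection is a single point mapping onto it. Since $\Lambda$ is compact and $\varprojlim(N,g)$ is Hausdorff, the continuous bijection $h$ is a homeomorphism, and hence a topological conjugacy.

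The technical heart — and the main obstacle — is verifying that the quotient $N=\Lambda/{\sim}$ genuinely carries a branched $n$-manifold structure for which $g$ is a flattening immersion: one must glue the finitely many unstable sheets coming from the Markov partition compatibly and control their branching, and check that the descended map is smooth and expanding in the appropriate sense. This is precisely the delicate part of Williams's original construction, and it is where essentially all of the work lies; by contrast, once $N$ and $g$ are in hand, the inverse-limit identification is a soft argument resting only on compactness of $\Lambda$ and expansivity of $f$.
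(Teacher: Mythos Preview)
The paper does not prove this statement at all: Theorem~\ref{thm:Williams} is quoted from Williams \cite[Theorem~A]{MR0348794} as background, and the only commentary the paper adds is the remark that the original $C^1$ hypothesis on the stable foliation was later removed (citing \cite{MR1179171}). So there is no ``paper's own proof'' to compare against.

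That said, your sketch is a faithful outline of Williams's original argument: collapse $\Lambda$ along local stable sets to obtain a branched $n$-manifold $N$, descend $f$ to an expanding map $g$ on $N$, and identify $\restrict f \Lambda$ with the shift on $\varprojlim(N,g)$ via $x\mapsto(\pi(f^{-k}x))_{k\ge 0}$. You correctly identify the hard step --- establishing the branched-manifold structure on the quotient and verifying Williams's axioms for $g$ --- and you are right that the inverse-limit identification afterward is soft. One point worth flagging: your proposal implicitly assumes the stable foliation is well-behaved enough that the quotient is a \emph{smooth} branched manifold on which $g$ is a smooth expanding immersion; this is exactly where Williams needed the $C^1$ stable foliation hypothesis, and removing it (as in \cite{MR1179171}) requires additional work that your sketch does not address.
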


Note that Theorem \ref{thm:Williams}, as originally stated in \cite{MR0348794}, required the additional hypothesis that the foliation $\{\locStab x \mid x\in \Lambda\}$ was $C^1$ on some neighborhood of $\Lambda$.  This was latter seen to be unnecessary (see for example \cite{MR1179171}).  
While not algebraic, the conjugacy in Theorem \ref{thm:Williams} provides a significant insight into the topology of $\Lambda$ and the dynamics of $\restrict f \Lambda$.    

In this article we present a result inspired in part by the Franks-Newhouse Theorem on co\-dimen\-sion-1 Anosov diffeomorphisms, and somewhat dual to the conjugacy between the dynamics of 1-dimensional expanding attractors and shift maps on generalized solenoids established in \cite{MR0217808} and \cite{MR0266227}.
In particular, we study \emph{nonexpanding} hyperbolic attractors $\Lambda$ for an embedding $f$, under the assumption that  $\restrict{\dim E^u}{\Lambda} =1$, and show that the dynamics $\restrict f \Lambda$ is  conjugate to an automorphism of a compact abelian group.  We take our dynamics to be generated by $C^r$ embeddings for $r\ge 1$.  
\begin{theorem}\label{thm:main}
Let $\Lambda\subset U\subset M$ be a compact topologically mixing hyperbolic attractor for a $C^r$ embedding  $f\colon U \to M$ such that $\restrict{\dim E^u} \Lambda =1$.  Then either $\Lambda$ is expanding, or is an embedded  toral solenoid (see Section \ref{sec:AS}).  In the latter case,  $\restrict {f} \Lambda$ is conjugate to a leaf-wise hyperbolic solen\-oidal automorphism.  In particular, if $\Lambda$ is locally connected then $\Lambda$ is homeomorphic to a torus and $\restrict f \Lambda$ is conjugate to a hyperbolic toral automorphism.  
\end{theorem}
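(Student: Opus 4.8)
The plan is to split Theorem~\ref{thm:main} into a topological part — identifying $\Lambda$, when it is not expanding, with a toral solenoid — and a dynamical part — linearizing $\restrict f\Lambda$ on it. Two cases are immediate: if $\dim\Lambda=1$ then $\Lambda$ is expanding and there is nothing to prove; and if some local stable slice $\locStab x\cap\Lambda$ has nonempty interior in $\locStab x$, then $\Lambda$ has nonempty interior in $M$ and hence, being a transitive attractor, equals the component of $M$ it lies in, so that $f$ is a codimension-one Anosov diffeomorphism to which Theorem~\ref{thm:fn} applies. So assume henceforth $\dim\Lambda\ge 2$ and that every $\locStab x\cap\Lambda$ has empty interior. (After replacing $f$ by $f^2$ if necessary I also assume $\restrict{E^u}\Lambda$ is oriented and orientation-preserved, purely to streamline the structure theory.)

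\textbf{Step 1: structure theorem.} I would first analyze $C_x:=\locStab x\cap\Lambda$. Since $\Lambda$ is an attractor, $\unst x\subset\Lambda$, and local product structure identifies a neighborhood of $x$ in $\Lambda$ with $(\mathrm{arc})\times C_x$, the arc factor being expanded by $f$. The aim is to show that, for every $x$, the set $C_x$ is a disjoint union of a Cantor family of $C^1$-embedded $(k-1)$-disks in $\locStab x$ (with $k=\dim\Lambda$) varying continuously with $x$; equivalently, that $\Lambda$ is a $k$-dimensional lamination — partitioned into dense leaves, each a connected immersed $k$-dimensional submanifold, with Cantor transversal — on which $f$ acts leaf-preservingly as a hyperbolic $C^r$ diffeomorphism with one-dimensional unstable direction. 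The engine is exactly the hypothesis that the unstable foliation is one-dimensional: holonomy of $\Lambda$ along unstable leaves generates a pseudogroup acting on a transversal to this foliation, density of unstable leaves (topological mixing) makes the action minimal, and minimality of such a ``codimension-one'' holonomy — together with using mixing to propagate the local picture uniformly over $\Lambda$ — forces the transverse structure to be a disk times a Cantor set rather than something pathological, and in particular forces the lamination to be \emph{unbranched}.

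\textbf{Step 2: identifying the solenoid.} Next I would collapse the Cantor transverse direction: building an inverse-limit presentation $\Lambda\cong\varprojlim(N,g)$ from a Markov partition and iteration of $f$, in the spirit of Williams's proof of Theorem~\ref{thm:Williams} but now retaining the $k-1$ ``manifold'' stable directions, and identifying $N$ as a closed $k$-manifold — genuine, not branched, precisely because of the unbranchedness from Step~1, which is where $\dim\restrict{E^u}\Lambda=1$ is essential, in contrast to the genuinely branched models of Williams — carrying an Anosov endomorphism $g$ with one-dimensional unstable direction. A codimension-one argument along the lines of Franks and Newhouse then gives $N\cong\T^k$. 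Since $g$ is homotopic to the linear endomorphism $A$ determined by its action on $\pi_1(\T^k)=\Z^k$, and since $\varprojlim(\T^k,\cdot)$ depends up to homeomorphism only on the homotopy class of the bonding map, $\Lambda\cong\varprojlim(\T^k,A)$ — a compact abelian group, our toral solenoid; leaf-wise hyperbolicity makes $A$ hyperbolic as a linear map with exactly one eigenvalue of modulus greater than $1$. If $\Lambda$ is locally connected, the Cantor transversal must be a single point, which forces $\det A=\pm1$ and hence $\Lambda\cong\T^k$.

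\textbf{Step 3: linearization, and the main obstacle.} On $\Lambda\cong\varprojlim(\T^k,A)$ I would produce a conjugacy from $\restrict f\Lambda$ to the algebraic shift $\hat A$ by a Franks--Manning-type argument: $f$ acts on the \v{C}ech cohomology $\check H^1(\Lambda)=\varinjlim(\Z^k\xrightarrow{A^\top}\Z^k\to\cdots)$, and once this action is checked to agree with that of $\hat A$ one lifts $f$ to the $\R^k$ leaf-covering of $\Lambda$, shows the lift stays within bounded $C^0$-distance of the linear model, and — using leaf-wise hyperbolicity in the standard shadowing/averaging construction — produces a continuous $h$ with $h\circ f=\hat A\circ h$ descending to a conjugacy on $\Lambda$; in the locally connected case $\hat A$ is a hyperbolic toral automorphism, recovering the $\Lambda=M$ content of Theorem~\ref{thm:fn}. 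I expect Step~1 to be by far the hard part: upgrading the merely set-theoretic splitting $\Lambda\cong(\mathrm{arc})\times C_x$ to a genuine $C^1$ lamination, and ruling out pathological transverse structure — positive-dimensional but not locally a disk, or ``branched'' — using only $\dim\restrict{E^u}\Lambda=1$ and minimality of the unstable foliation. Given the structure theorem, the identification $N\cong\T^k$ and the cohomological linearization are comparatively routine adaptations of the Franks--Newhouse and Franks--Manning methods to the solenoidal setting.
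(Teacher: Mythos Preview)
Your outline is structurally sensible, but Step~1 --- which you rightly flag as the crux --- has no workable mechanism. You propose that minimality of the unstable holonomy ``forces the transverse structure to be a disk times a Cantor set''. Minimality tells you only that the holonomy carries each $C_x$ homeomorphically onto a dense family of other $C_{x'}$'s; it says nothing about the internal topology of a single $C_x$. A minimal pseudogroup can act on a Sierpi\'nski carpet or any self-similar compactum just as easily as on a Cantor family of disks, and ruling out precisely this kind of pathology is the main new content of the theorem --- the paper makes this point explicitly in the discussion after Corollary~\ref{cor:dim3}. Your Step~2 then inherits the gap (no unbranched lamination, no Williams-style collapse to an honest manifold $N$) and additionally leans on a codimension-one Franks--Newhouse statement for Anosov \emph{endomorphisms}, which is not available off the shelf and would itself require an argument.

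The paper sidesteps the lamination question entirely. It passes to the universal cover $\wtd B$ of the basin, uses the Ruelle--Sullivan disintegration of the measure of maximal entropy to build a signed transverse length $l^u$ --- this is exactly where $\dim E^u=1$ enters, since $l^u$ then integrates to an exact $1$-cochain and supplies a global coordinate on the leaf space of the stable foliation --- and introduces an equivalence relation on $\wtd\Lambda$ whose classes record which points of a stable leaf share a global product structure with a fixed unstable leaf. The dichotomy is whether the deck group $G=\pi_1(B)$ acts properly discontinuously on the metrized quotient $\wtd\Omega$: if so, an upper-semicontinuity argument on the diameter function of stable-slice components forces those components to be points (the expanding case); if not, one shows $\wtd\Lambda$ has \emph{global} product structure, whence $G$ embeds in the orientation-preserving isometries of $(\R,d^u)$ and is torsion-free abelian, after which a Hiraide-style global shadowing argument constructs the conjugacy to the solenoidal automorphism directly. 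The local structure of $C_x$ is then read off from the conjugacy a posteriori, never established beforehand.
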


Using the primary result in \cite{MR2415057} we conclude that the only 2-dimensional toral solenoids that may be embedded in a $3$-man\-ifold are homeomorphic to $\T^2$.  
In particular, we obtain the following.

\begin{corollary}\label{cor:dim3}
Let $M$ be a 3-man\-ifold, and let $\Lambda\subset M$ be a basic set with $\dim (\Lambda )=2$.  Then either $\Lambda$ is a co\-dimen\-sion-1 expanding attractor (or contracting repeller), or $\Lambda$ decomposes as a disjoint union $$\Lambda = \Omega_1 \cup \Omega_2 \cup \dots \cup \Omega_k$$ where each $\Omega_j$ is 
homeomorphic to $\T^2$ and $\restrictThm {f^k} {\Omega_j}$ is conjugate to a hyperbolic automorphism of $\T^2$.  
\end{corollary}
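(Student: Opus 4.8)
The plan is to deduce Corollary~\ref{cor:dim3} from Theorem~\ref{thm:main}, using the spectral decomposition to pass to the topologically mixing case and invoking \cite{MR2415057} only at the last step. Let $f$ denote the $C^r$ embedding for which $\Lambda$ is a basic set. I would first record two standard structural facts. Since $\Lambda$ is a hyperbolic set, $T_xM = E^s_x\oplus E^u_x$ for every $x\in\Lambda$, so $\dim E^s+\dim E^u = \dim M = 3$; and since $\Lambda$ is a basic set it has local product structure, whence $\dim\Lambda = d^s+d^u$, where $d^s = \dim\bigl(W^s_{\mathrm{loc}}(x)\cap\Lambda\bigr)$ and $d^u = \dim\bigl(W^u_{\mathrm{loc}}(x)\cap\Lambda\bigr)$ are independent of $x\in\Lambda$, with $0\le d^s\le\dim E^s$ and $0\le d^u\le\dim E^u$. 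Recall that a basic set $\Lambda$ is an attractor if and only if $W^u_{\mathrm{loc}}(x)\subseteq\Lambda$ for $x\in\Lambda$, and that, by the Hurewicz--Wallman fact that a subset of an $m$-manifold of covering dimension $m$ has nonempty interior, this holds precisely when $d^u = \dim E^u$; symmetrically $\Lambda$ is a repeller iff $d^s=\dim E^s$. Hence if $\Lambda$ were neither an attractor nor a repeller we would get $\dim\Lambda = d^s+d^u\le(\dim E^s-1)+(\dim E^u-1) = \dim M-2 = 1$, contradicting $\dim\Lambda = 2$. So $\Lambda$ is an attractor or a repeller; replacing $f$ by $f\inv$ if necessary --- this exchanges $E^s$ with $E^u$ and attractors with repellers, and does not affect the statement to be proved, since the inverse of a hyperbolic toral automorphism is again one --- I may assume $\Lambda$ is an attractor, so $d^u = \dim E^u$.

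Next I split on the value of $\dim E^u$. If $\dim E^u = 2$ then $\dim\Lambda = 2 = \dim E^u$, so by definition $\Lambda$ is an expanding attractor, and it is codimension-1 because $\dim E^s = 1$; this yields the first alternative of the corollary (a codimension-1 expanding attractor, or, after the possible time-reversal, a codimension-1 contracting repeller). If instead $\dim E^u = 1$ then $\dim\Lambda = 2\ne 1 = \dim E^u$, so $\Lambda$ is a nonexpanding attractor. Apply the spectral decomposition to write $\Lambda = \Omega_1\cup\dots\cup\Omega_k$ with $f$ cyclically permuting the pairwise disjoint closed pieces $\Omega_j$ and $\restrictThm{f^k}{\Omega_j}$ topologically mixing; a standard argument shows each $\Omega_j$ is a locally maximal hyperbolic attractor for $f^k$, still with one-dimensional unstable bundle, and $\dim\Omega_j = \dim\Lambda = 2$ since the $\Omega_j$ are mutually homeomorphic via iterates of $f$. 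Theorem~\ref{thm:main} then applies to $\restrictThm{f^k}{\Omega_j}$: as $\Omega_j$ cannot be expanding (its dimension is $2$, not $1$), it must be an embedded toral solenoid, necessarily two-dimensional and embedded in the $3$-manifold $M$. By the main result of \cite{MR2415057} such a solenoid is homeomorphic to $\T^2$; in particular $\Omega_j$ is locally connected, so the last sentence of Theorem~\ref{thm:main} gives that $\restrictThm{f^k}{\Omega_j}$ is conjugate to a hyperbolic automorphism of $\T^2$. This is the second alternative, and the proof is complete.

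Essentially all of the substance is imported: the classification in the mixing case is Theorem~\ref{thm:main}, and the topological rigidity of codimension-1 solenoids in $3$-manifolds is \cite{MR2415057}. The one genuinely new point is the dichotomy that a codimension-1 basic set is an attractor or a repeller, and I expect the only care needed there to be in correctly invoking the dimension-theoretic ingredient (a closed subset of an $m$-manifold of covering dimension $m$ has interior) together with the local-product formula $\dim\Lambda = d^s+d^u$ and the characterization of attractors via $d^u = \dim E^u$ --- rather than in any delicate new estimate.
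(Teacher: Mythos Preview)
Your argument follows the paper's route: reduce to the attractor case, split on $\dim E^u$, pass to mixing pieces, apply Theorem~\ref{thm:main}, then invoke \cite{MR2415057}. Where the paper simply cites Plykin \cite[Theorem~3]{MR0286134} for the attractor/repeller dichotomy, you sketch a direct dimension argument; this is essentially Plykin's idea, but note that the implication ``$d^u=\dim E^u\Rightarrow\Lambda$ is an attractor'' needs one more step beyond the Hurewicz--Wallman fact you quote: nonempty interior of $W^u_{\mathrm{loc}}(x)\cap\Lambda$ in $W^u_{\mathrm{loc}}(x)$ does not by itself give $W^u_{\mathrm{loc}}(x)\subset\Lambda$---one finishes with a standard transitivity-and-expansion argument along unstable manifolds. (Also, you only need the inequality $\dim\Lambda\le d^s+d^u$, which is the Hurewicz product theorem; equality can fail for general products.) Two small omissions to patch: you should explicitly exclude $\dim E^u\in\{0,3\}$ (immediate from Lemma~\ref{lem:trivialsplitting}, since $\dim\Lambda=2$ forces $\Lambda$ infinite), and \cite{MR2415057} is stated for closed orientable $3$-manifolds, so as the paper does you should first pass to the orientation double cover and double along the boundary if necessary before applying it.
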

We note that the above corollary is a significantly stronger version of the main result in \cite{MR2249032}.  Indeed, in  \cite{MR2249032} the result  corresponding  to the second case in Corollary \ref{cor:dim3} requires the additional hypothesis that $\Lambda$ is embedded as a subset of a closed surface in $M$.  Our result, on the other hand,  rules out the possibility that $\dim (\restrict {E^u}{\Lambda} ) = 1$ and $\stab x \cap \Lambda$ is a connected 1-dimensional set that is not a manifold, for example, a  Sierpinski carpet.  

It should also be noted that in the conclusion of Corollary \ref{cor:dim3}, the $\T^2$ need not be smoothly embedded.  Indeed in \cite{MR766105} a hyperbolic attractor is constructed as a nowhere differentiable  torus embedded in a 3-man\-ifold.  

The motivation for this work was initially to  answer a question by  Bonatti \cite{BonattiEmail} which can be paraphrased as follows: Do there exist examples of hyperbolic attractors in 3-man\-ifolds besides the classical examples?  We answer this question in the negative. 
\begin{theorem}\label{thm:HA3D}
Let $M$ be a $3$-man\-ifold, and let $\Lambda\subset U\subset M$ be a topologically mixing, hyperbolic attractor for a $C^r$ embedding $f\colon U \to M$.  If 
\begin{itemize} \item[] \begin{description}
\item[$\dim \Lambda =0$]  then $\Lambda$ is an attracting fixed point  for $f$;
\item[$\dim \Lambda = 1$] then $\dim \restrict {E^u}\Lambda = 1 $ and $\Lambda$ is conjugate to the shift map on a generalized $1$-solen\-oid as classified by Williams (\cite {MR0266227});
\item[$\dim \Lambda = 2$] then we have $1\le \dim \restrict {E^u}\Lambda \le 2 $ and if
\begin{description}
\item[$\dim \restrict {E^u}\Lambda = 1$] then
$\Lambda$ is homeomorphic to $\T^2$ and $\restrict f \Lambda$ is conjugate to a hyperbolic toral automorphism;
\item[$\dim \restrict {E^u}\Lambda = 2 $] then $\Lambda$ is a co\-dimen\-sion-1 expanding attractor studied by Plykin (\cite{MR580625}, \cite{MR586207});
\end{description}
\item[$\dim \Lambda = 3$] then $\Lambda= M\cong \T^3$ and $f$ is conjugate to a hyperbolic toral automorphism. 
\end{description}
\end{itemize}
\end{theorem}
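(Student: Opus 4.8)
The plan is to run a case analysis on the pair $(d,u):=(\dim\Lambda,\dim\restrict{E^u}\Lambda)$, reducing each branch either to a short dimension count, to a classical classification, or to Theorem~\ref{thm:main}. First I would record the structural constraints. A hyperbolic attractor is the union of the unstable manifolds of its points, so $\unst{x}\subseteq\Lambda$ for every $x\in\Lambda$ and hence $u\le d$; moreover $d\le\dim M=3$ and $\dim\restrict{E^s}\Lambda+u=3$. Next I would establish the dichotomy $u=0\iff d=0$: if $u=0$ then, in an adapted metric, $f$ is a uniform contraction on a neighborhood of $\Lambda$, so the contraction mapping principle forces $\Lambda$ to be the single (attracting) fixed point in that neighborhood; conversely $d=0$ gives $u=0$. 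This disposes of the line $\dim\Lambda=0$ and shows $u\ge1$ once $d\ge1$, which together with $u\le d$ yields the asserted ranges: $u=1$ when $d=1$, and $1\le u\le 2$ when $d=2$.

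For the expanding branches $d=u$: when $d=u=1$, $\Lambda$ is a $1$-dimensional expanding attractor, so $\restrict f\Lambda$ is conjugate to the shift on a generalized $1$-solen\-oid by Williams's classification \cite{MR0266227} (the $n=1$ case of Theorem~\ref{thm:Williams}); when $d=u=2$, $\Lambda$ is a codimension-$1$ expanding attractor in a $3$-man\-ifold, and these are exactly the attractors classified by Plykin in \cite{MR580625}, \cite{MR586207}. For $d=3$: a compact subset of the connected manifold $M$ of topological dimension $3$ has nonempty interior, and a hyperbolic attractor with nonempty interior must be the whole connected manifold, so $\Lambda=M$ and $f$ is Anosov; here $u=0$ is excluded (it would give $\dim\Lambda=0$) and $u=3$ is excluded (it would force $\dim E^s=0$, impossible for a diffeomorphism of a compact manifold), so $\dim E^s=1$ or $\dim E^u=1$, i.e.\ $f$ is a codimension-$1$ Anosov diffeomorphism, and Theorem~\ref{thm:fn} gives $M\cong\T^3$ with $f$ conjugate to a hyperbolic toral automorphism.

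The essential branch is $d=2$, $u=1$ (so $\dim\restrict{E^s}\Lambda=2$), which I would feed directly into Theorem~\ref{thm:main}. Since $d=2\neq1=u$, the attractor $\Lambda$ is not expanding, so Theorem~\ref{thm:main} makes $\Lambda$ an embedded toral solenoid with $\restrict f\Lambda$ conjugate to a leaf-wise hyperbolic solen\-oidal automorphism. To reach the stated conclusion I would then invoke the primary result of \cite{MR2415057}: a $2$-dimensional toral solenoid that embeds in a $3$-man\-ifold is homeomorphic to $\T^2$, hence locally connected, so the final clause of Theorem~\ref{thm:main} upgrades the statement to $\Lambda\cong\T^2$ with $\restrict f\Lambda$ conjugate to a hyperbolic toral automorphism.

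I expect essentially all the genuine difficulty to be concentrated in Theorem~\ref{thm:main} (assumed here): every other branch is a dimension count or a citation, and the only gluing subtlety is checking that \emph{non-expanding} is the correct hypothesis with which to invoke Theorem~\ref{thm:main} in the $d=2$, $u=1$ case, which is immediate from $d=2>1=u$. A minor point worth verifying carefully is the folklore fact used for $d=3$, that a hyperbolic attractor with nonempty interior in a connected manifold equals that manifold; this follows from local product structure, since interior points force the local stable ``Cantor'' direction of $\Lambda$ to be full, making $\Lambda$ open as well as closed.
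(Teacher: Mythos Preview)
Your proposal is correct and follows essentially the same case analysis as the paper, invoking the same key inputs (Williams, Plykin, Franks--Newhouse, Theorem~\ref{thm:main}, and \cite{MR2415057}) in the same places. The only point the paper handles more carefully is that \cite{MR2415057} is stated for \emph{closed orientable} 3-manifolds, so in the $d=2$, $u=1$ branch one first passes to an orientable double cover and, if $M$ is open, embeds a compact neighborhood of $\Lambda$ into a closed 3-manifold by doubling along the boundary; you should add this reduction before invoking that reference.
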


We remark that in the case of 1-dimensional topologically mixing attractors (which are necessarily expanding), the proof of Theorem \ref{thm:main} provides a mechanism to determine if the attractor is algebraic, that is,  if $\restrict f \Lambda$  is conjugate to a solen\-oidal automorphism.  In particular the presence or absence of a \emph{global product structure} as described in Section \ref{section:GPS} determines whether or not a 1-dimensional attractor is algebraic.  See Proposition \ref{prop:MainAlg}.

\section{Hyperbolic dynamics}

We begin with background material in hyperbolic dynamics and attractors.
Let $M$ be a smooth manifold endowed with a Riemannian metric. Given $U\subset M$ and a $C^r$ embedding  $f\colon U\to M$, $r\ge 1$, we say a subset $\Lambda\subset U$ is  \emph{invariant} if $f(\Lambda) = \Lambda$.    A compact invariant set $\Lambda$ is said to be \emph{hyperbolic} if  there exist a Riemannian metric on $M$ (called the \emph{adapted} metric), a constant $\kappa<1$,  and a continuous $Df$-invariant splitting of the tangent bundle $T_x M = E^s(x)\oplus E^u(x)$ over $\Lambda$   
so that for every  $x\in \Lambda$ and $n \in \N$
\begin{align*}
 \|Df^n_x v\|\le \kappa^n \|v\|,  \quad &\mathrm{ for } \ v\in E^s(x)\\
\|Df^{-n}_x v\|\le \kappa^{n}\|v\|,  \quad &\mathrm{ for }\   v\in E^u(x).
\end{align*}

We set \[V^\pm = \bigcap _{n\in \N} f^{\pm n} (U).\]  
When $\Lambda$ is hyperbolic, there exists an $\epsilon>0$ such that the sets 
\begin{align*}
W^s_\epsilon (x) &:= \{y\in V^- \mid d(f^n(x),f^n(y))< \epsilon, \  \mathrm{for\ all} \  n\ge 0\} \\ W^u_\epsilon (x) &:= \{y\in V^+\mid d(f^{-n}(x),f^{-n}(y))< \epsilon, \ \mathrm{for\ all} \  n\ge 0\}
\end{align*}
are $C^r$ embedded open disks, called the \emph{local stable} and \emph{unstable} manifolds. 
Furthermore, if $d$ is the distance on $M$ induced by  the adapted metric, there are $\lambda<1<\mu$ so that for $x\in \Lambda, y\in \locStab[\epsilon]x, z\in \locUnst[\epsilon]x$ and $n\ge 0$ we have
\begin{align}
d(f^n(x), f^n(y))& \le \lambda^n d(x,y)\label{eqn:asymS}\\
d(f^{-n}(x), f^{-n}(z))& \le \mu^{-n} d(x,z).\label{eqn:asymU}
\end{align}
Note that \eqref{eqn:asymS} and \eqref{eqn:asymU}  imply  $ f(W_{\epsilon}^s(f\inv(x)))\subset W_{\epsilon}^s(x)$ and $W_{\epsilon}^u(x) \subset f(W_{\epsilon}^u(f\inv(x)))$. 

For $x\in \Lambda$ we also have the sets 
\[\stab x := \{y\in V^-\mid d(f^n(x), f^n(y) )\to 0 \mathrm{ \ as \ } n\to \infty\} \]
and 
\[\unst x := \{y\in V^+\mid d(f^{-n}(x), f^{-n}(y) )\to 0 \mathrm{ \ as \ } n\to \infty\} \]
called the \emph{global} stable and unstable manifolds.  
Both $\unst x$ and $\stab x$ are $C^r$ injectively immersed submanifolds.  
Note that in the case that $f$ is invertible (that is, when $f(U) = U$),  we have $\unst x \cong \R^{\dim E^u(x)}$ and $\stab x \cong \R^{\dim E^s(x)}$. 

An invariant set $\Lambda$ is said to be \emph{topologically transitive} under $f$ if it contains a dense orbit.  Alternatively, a compact invariant subset $\Lambda\subset M$ is topologically transitive if for all pairs of nonempty open sets $U, V\subset \Lambda$, there is some $n$ such that $f^n(U)\cap V \neq \emptyset$. An invariant set $\Lambda$ is called \emph{topologically  mixing} if for all pairs of nonempty  open sets $U, V \subset \Lambda$, there is some $N$ such that $f^n(U)\cap V \neq \emptyset$ for all $n\ge N$.

A hyperbolic set $\Lambda$ is a \emph{hyperbolic  attractor} if there is some open neighborhood $ \Lambda\subset V$ such that $\bigcap_{n\in \N}f^n(V) = \Lambda$.  Alternatively, if $\Lambda$ is a hyperbolic set, then it is an attractor if and only if $\unst x \subset \Lambda$ for all $x\in \Lambda$. When $\Lambda$ is a hyperbolic attractor,  the set $\bigcup_{y\in \Lambda}\stab y$ is called the \emph{basin} of $\Lambda$. Note that if $\Lambda$ is a topologically  mixing hyperbolic attractor, then for each $x\in \Lambda$, $\unst x$ is dense in $\Lambda$.  

We recall from the introduction that a hyperbolic attractor $\Lambda$ is called \emph{expanding} if the topological dimension of $\Lambda$ equals the dimension of the unstable manifolds.  (For an introduction to topological dimension, see \cite{MR0006493}.)  Alternatively, $\Lambda$ is expanding if for every $x\in \Lambda$ the set $\locStab x \cap \Lambda$ is totally disconnected.

\subsection{Local product structure and Markov partitions}

Recall that given a compact hyperbolic set, we may find  $0<\delta<\eta$   so that $d(x,y)<\delta $ implies the intersection $ \locUnst[\eta] x \cap \locStab [\eta] y $ is a singleton.  We say that a hyperbolic  set $\Lambda$ has \emph{local product structure} if for $\eta, \delta$ above,  $d(x,y)<\delta $ implies  $ \locUnst[\eta] x \cap \locStab [\eta] y \subset \Lambda$.  
A compact hyperbolic set $\Lambda$ is called \emph{locally maximal} if there exists an open set $\Lambda\subset V $ such that $\Lambda = \bigcap_{n\in \Z} f^n(V).$  For compact  hyperbolic sets, local maximality is equivalent to the existence of a 
{local product structure} \cite{MR1326374}; in particular, hyperbolic attractors have local product structure.  

\begin{definition}\label{def:LPC} Given  a set $\Lambda$ with local product structure and $\delta$ and $ \eta$ as above, we say a closed set $R\subset \Lambda$ is a \emph{rectangle} or a \emph{local product chart} if 
\begin{enumerate}
\item $\sup\{d(x,y)\mid x,y\in R\} <\delta$;
\item $R$ is proper, that is, $R $ is equal to the closure of its interior (in $\Lambda$);
\item $x,y\in R $ implies $\locUnst[\eta] x \cap \locStab[\eta] y \subset R$.  
\end{enumerate}
If $R$ is a rectangle, we write $W^\sigma_R(x) :=W^\sigma_\eta(x) \cap R$.  

If $\Lambda$ is an attractor, we say an ambiently open set $V\subset M$ is a \emph{local $s$-product neighborhood} if the closure of $V\cap \Lambda$ is a rectangle and for each $x\in V\cap \Lambda$ $$V\subset \bigcup_{y\in \locUnst[\delta]x} \locStab[\delta] y.$$  For $x\in V\cap \Lambda$ we notate $$\locStab[V] x:= \locStab[\delta] x \cap V.$$

\end{definition} 

\begin{definition}\label{def:markov}  Given a hyperbolic set $\Lambda$ with local product structure we say a collection of rectangles $\mathcal R = \{R_j\}$  is a \emph{Markov partition} if 
\begin{enumerate}
	\item $\Lambda = \bigcup_j R_j$;
	\item for $i\neq j$, $R_j \cap R_i\subset \partial R_j$ where $\partial $ denotes the topological boundary;
	\item $x\in R_i \in \mathcal R$ and $f(x) \in \int(R_j)$ implies  \[f(\locStab[R_i] x) \subset R_j\] and $f\inv(x) \in \int(R_j)$ implies  \[f\inv(\locUnst[R_i] x) \subset R_j.\]
\end{enumerate}
\end{definition}
We note that every locally maximal hyperbolic set admits a Markov partition; in particular, hyperbolic attractors admit Markov partitions.  Also note that if $R$ is a rectangle and $\mathcal R$ is a Markov partition, then $f(R)$ is a rectangle and $f(\mathcal R):= \{f(R_j)\}$ is a Markov partition.  In particular, we have the following.
\begin{claim}\label{clm:compactInside}
If $\Lambda$ is locally maximal, then given any  set $K\subset W^\sigma(x) \cap \Lambda$, compact in the internal topology of $W^\sigma(x)$, there is a rectangle containing $K$.  
\end{claim}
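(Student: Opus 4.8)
The plan is to push $K$ forward under an iterate of $f$ until it sits inside a single local $\sigma$-manifold, and then to inflate that small piece into a genuine rectangle with the help of the local product structure; pulling back by the iterate then gives the statement. I treat $\sigma = s$ (for $\sigma = u$ one runs the argument with $f\inv$). First, observe that the sets $U_n := \{y \in \stab{x} : f^n(y) \in \locStab{f^n(x)}\}$ increase with $n$ and exhaust $\stab{x}$, and each is open in the internal topology of $\stab{x}$, since $f^n \colon \stab{x} \to \stab{f^n(x)}$ is an internal homeomorphism and $\locStab{f^n(x)}$ is an embedded open disk. By internal compactness, $K \subset U_N$ for some $N$; so, writing $y_0 := f^N(x)$ and $K_0 := f^N(K)$, the set $K_0$ is an internally compact subset of $\locStab{y_0} \cap \Lambda$ of ambient diameter at most $2\epsilon$. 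Since $\epsilon$ may be shrunk at the outset, I assume it is small relative to the local-product constants $\delta, \eta$.

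Next I would choose a compact set $A$ with $\{y_0\} \cup K_0 \subset A \subset \locStab{y_0} \cap \Lambda$ that is \emph{proper} in $\stab{y_0} \cap \Lambda$: cover $\{y_0\} \cup K_0$ by finitely many small sets relatively open in $\stab{y_0} \cap \Lambda$ whose closures lie in $\locStab{y_0}\cap\Lambda$, and let $A$ be the closure of their union, which is regular closed. Likewise fix a compact $B \ni y_0$ that is proper in $\unst{y_0} \cap \Lambda$ with $B \subset \locUnst{y_0}$. Then $d(s,u) < 2\epsilon < \delta$ for all $s \in A$, $u \in B$, so the local product structure makes $[s,u] := \locUnst[\eta]{s} \cap \locStab[\eta]{u}$ a well-defined point of $\Lambda$, and I set
\[ R := \{\, [s,u] : s \in A,\ u \in B \,\}. \]
Since $a \in \locUnst[\eta]{a} \cap \locStab[\eta]{y_0}$ for $a \in A$ (the second membership because $a \in \locStab{y_0}$ and $\epsilon \le \eta$), we get $[a, y_0] = a$; as $y_0 \in B$, this gives $A \subset R$, hence $K_0 \subset R$.

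Then I would verify that $R$ is a rectangle. The bracket map $A \times B \to R$, $(s,u)\mapsto[s,u]$, is a continuous bijection of compacta with continuous inverse $[s,u] \mapsto \big([[s,u], y_0],\, [y_0, [s,u]]\big)$ — injectivity and continuity of the inverse following from the uniqueness in the local product structure — hence a homeomorphism carrying the honest product $A \times B$ onto $R$ through the local product charts of $\Lambda$; properness of $A$ and $B$ then yields condition (2) of Definition~\ref{def:LPC}. For condition (1), continuity of the bracket together with $[s,s] = s$ gives $d([s,u], s) \le \omega(2\epsilon)$ for a modulus $\omega$ vanishing at $0$, so $\diam R \le 2\epsilon + 2\omega(2\epsilon) < \delta$ for $\epsilon$ small. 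For condition (3), if $p = [s_p, u_p]$ and $q = [s_q, u_q]$ lie in $R$, then $\unst{p} = \unst{s_p}$ and $\stab{q} = \stab{u_q}$, so the singleton $\locUnst[\eta]{p} \cap \locStab[\eta]{q}$ equals $[s_p, u_q] \in R$. Thus $R$ is a rectangle with $f^N(K) \subset R$, i.e. $K \subset f^{-N}(R)$.

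The main difficulty will be the inflation step: one must check that $A$ and $B$ can simultaneously be taken proper, internally compact, and of controlled diameter, and that the product parametrization genuinely transports their properness into condition (2) via the local product charts. A secondary point requiring care is the bookkeeping of scales — $R$ is a rectangle in the strict sense (diameter $<\delta$), but for large $N$ the preimage $f^{-N}(R)$ that literally contains $K$ need not be small; it is the image $\restrict{f^{-N}}{\Lambda}(R)$ of a rectangle, hence still a product set along the stable and unstable foliations, and an honest rectangle precisely when $K$ already lies inside a single local stable manifold (so that $N = 0$ may be taken).
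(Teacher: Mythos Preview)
Your approach is essentially the paper's: the paper's entire argument is the sentence immediately preceding the claim, namely that ``if $R$ is a rectangle \dots\ then $f(R)$ is a rectangle,'' from which the claim is meant to follow by iterating $f$ (or $f^{-1}$) until the image of $K$ fits inside a single small rectangle and then pulling back. You carry out the ``fit inside a small rectangle'' step more carefully than the paper does, building $R$ explicitly from proper stable and unstable slices rather than invoking a Markov partition element.

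The scale concern you flag at the end is exactly the soft point in both arguments, and your resolution is too pessimistic. The paper does not mean that $f(R)$ satisfies Definition~\ref{def:LPC} with the \emph{same} constants $\delta,\eta$; rather, the constants are implicitly allowed to grow with the iterate, so $f^{-N}(R)$ is a rectangle in the sense of conditions~(2) and~(3) with $\eta$ replaced by some $\eta_N$ large enough to accommodate the stretched stable plaques. The way the claim is actually used later (for instance in the proof of Claim~\ref{prop:classProperties}\ref{class1}) only needs this: a closed proper product set in $\Lambda$ whose lift to the universal cover gives a product chart containing the given compact set. So you should not retreat to ``$N=0$''; instead, simply observe that properness~(2) and the bracket closure~(3) are preserved under $f^{\pm1}$ (with a suitably enlarged $\eta$), and declare $f^{-N}(R)$ the desired rectangle in that sense. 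Your construction of the small $R$ is fine and your verification of (1)--(3) for it is correct; the only adjustment needed is to drop the insistence on the fixed diameter bound for the final object.
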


\subsection{Disintegration of the measure of maximal entropy}
For a hyperbolic set with local product structure we define a \emph{canonical isomorphism} between subsets of the stable and unstable  manifolds.  

\begin{definition}[Canonical Isomorphism]\label{def:CI}  Let $\Lambda$ be a locally maximal hyperbolic set, $R$  a rectangle, and $x\in R$.  Let $x'\in \locUnst  [R] x$, and let $D\subset \locStab  [R] x$, $D'\subset \locStab[R] {x'}$.  Then $D$ and $D'$ are said to be \emph{canonically isomorphic} 
if $y\in D\cap \Lambda$  implies $D'\cap \locUnst[R] y \neq \emptyset$ and $y'\in D'\cap \Lambda$ implies $D \cap \locUnst [R] {y'} \neq \emptyset$. 

Similarly, we may define a canonical isomorphism between subsets of local unstable manifolds.  
\end{definition}

Recall that a point $x\in M $ is said to be \emph{nonwandering} if for every neighborhood $ U$of $x$, there is an $n$ so that $f^n(U) \cap U \neq \emptyset$.  Let $\NW(f)$ denote the nonwandering points of $f$.  
Recall that given an Axiom-A diffeomorphism, (respectively a locally maximal hyperbolic set 
$\Lambda = \bigcap _{n\in \Z} f^n(V)$) we have a partition, called the \emph{spectral decomposition}, of the nonwandering points $\NW(f)= \Omega_1 \cup \dots \cup \Omega_k$ (respectively $\NW(\restrict f \Lambda)=  \Omega_1 \cup \dots \cup \Omega_k$) where each $\Omega_j$ is a transitive hyperbolic set for $f$ (see  \cite{MR1326374}, \cite{MR0228014}).  Given a spectral decomposition, we call  the partition elements $\Omega_j$ above \emph{basic sets}.  That is, a compact hyperbolic set $\Omega\subset \NW(f)$ is a \emph{basic set}, if $\Omega$ is open in $\NW(f)$ and $f$ is topologically transitive on $\Omega$.  Clearly  topologically mixing hyperbolic attractors are  basic sets.  

Given a basic set, there is a canonical disintegration of the measure of maximal entropy as a product of measures supported on the stable and unstable manifolds.  The following is adapted from  \cite{MR0415679}.  
\begin{theorem}[Ruelle, Sullivan  \cite{MR0415679}]\label{thm:RSS}
Let $\Omega$ be a basic set for $f$.  
Let $h $ be the topological entropy of $\restrict f \Omega$. Then there is an $\epsilon>0$ so that for each $x\in \Omega$ there is a measure $\mu_x^u$ on $\locUnst x$ and a measure $\mu_x^s$ on $\locStab x$ such that: 

\begin{enumerate}[ label=\emph{\alph*)}, ref=(\alph*) ]

\item $\supp  (\mu_x^u)= \locUnst x\cap \Omega$ and $\supp ( \mu_x^s)= \locStab x\cap \Omega$,
\item \label{RSS:CI} $\mu^u_x $ and $\mu^s_x$ are invariant under canonical isomorphism (see Definition \ref{def:CI}); that is, if $x'\in \locStab [\eta] x$ and $D\subset \locUnst [\eta]  x, D'\subset \locUnst  [\eta] {x'}$ are canonically isomorphic then $\mu^u_x(D )= \mu^u_{x'} (D')$,  and if  $x'\in \locUnst [\eta] x$ and $D\subset \locStab  [\eta] x, D'\subset \locStab  [\eta]{ x'}$ are canonically isomorphic then $\mu^s_x (D )= \mu^s_{x'} (D')$,
\item $f_* \mu ^u_x = e^{-h} \mu ^u_{f(x)} $  and  $f\inv _* \mu ^s_x = e^{-h}  \mu ^s_{f\inv(x)} $,
\item the product measure $\mu^u_x \times \mu^s_x$ is locally equal to Bowen's measure of maximal entropy.  
\end{enumerate}
\end{theorem}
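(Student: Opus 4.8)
The plan is to reduce to symbolic dynamics via a Markov partition and transport the product structure of the maximal-entropy measure of the associated subshift of finite type. Fix a Markov partition $\mathcal R=\{R_1,\dots,R_m\}$ of $\Omega$ with $\diam R_j<\delta$, where $\delta$ is as in the local product structure, and choose $\epsilon>0$ small relative to $\mathcal R$, so that each of the local manifolds $\locUnst x$, $\locStab x$ is covered by a uniformly bounded number of rectangles. Let $A$ be the transition matrix, $A_{ij}=1$ iff $f(\int R_i)\cap\int R_j\ne\emptyset$, let $(\Sigma_A,\sigma)$ be the corresponding two-sided subshift, and let $\pi\colon\Sigma_A\to\Omega$ be the standard surjective, boundedly finite-to-one semiconjugacy with $\pi\circ\sigma=f\circ\pi$. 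Since $\Omega$ is a basic set the matrix $A$ may be taken irreducible (Bowen); then $h=h_{\mathrm{top}}(\restrict f\Omega)=\log\lambda_A$, where $\lambda_A$ is the Perron--Frobenius eigenvalue of $A$, the subshift $\Sigma_A$ has a unique measure of maximal entropy $\nu$ (the Parry measure), and, since a finite-to-one factor preserves entropy of invariant measures and the maximal-entropy measure on $\Omega$ is unique, Bowen's measure of maximal entropy on $\Omega$ equals $\mu=\pi_*\nu$.

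Next I would introduce explicit measures on cylinders. Let $\ell,r>0$ be left and right Perron eigenvectors of $A$. For each symbol $i$ define a finite measure $\nu^+_i$ on the forward-admissible sequences starting at $i$ by assigning the cylinder $[\eta_1\cdots\eta_n]$ the mass $\lambda_A^{-n}\,r_{\eta_n}\,A_{i\eta_1}A_{\eta_1\eta_2}\cdots A_{\eta_{n-1}\eta_n}$ (its total mass is $r_i$), and symmetrically a finite measure $\nu^-_i$ on the backward-admissible sequences ending at $i$, with $\ell$ in place of $r$. A telescoping computation from $Ar=\lambda_A r$ and $\ell A=\lambda_A\ell$ yields: (i) the restriction of $\nu$ to the cylinder $[i]$ equals $\nu^-_i\times\nu^+_i$ under the identification of $[i]$ with (pasts ending at $i$)$\,\times\,$(futures starting at $i$); and (ii) under $\sigma$ the family $\{\nu^+_i\}$ is carried to itself up to the scalar $\lambda_A^{-1}=e^{-h}$ along admissible transitions, and dually $\{\nu^-_i\}$ under $\sigma\inv$. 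Then for $x\in\Omega$ I would choose $\omega\in\pi\inv(x)$ with $\omega_0=i$; off the ($\nu$-null) set where $\pi$ fails to be injective, $\locUnst x\cap\Omega$ is identified with the set of futures of $\omega$ — which, by the Markov property, is the set of all admissible futures from $i$ — so set $\mu^u_x:=\pi_*\nu^+_i$, transported and glued across the rectangles met by $\locUnst x$ using the Markov property, and dually $\mu^s_x:=\pi_*\nu^-_i$; one checks this is independent of the choice of $\omega$ and of $\mathcal R$.

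Properties (a)--(d) would then be verified as follows. (a): $\nu$ has full support on the irreducible $\Sigma_A$, so $\supp\mu^u_x=\locUnst x\cap\Omega$ and $\supp\mu^s_x=\locStab x\cap\Omega$. (b): $\nu^+_i$ depends only on the symbol $i$ and the forward word, while a canonical isomorphism between unstable slices lying in a common rectangle $R_i$ is precisely a matching of forward words with $\omega_0=i$ held fixed; the general case reduces to this by subdividing, and the stable statement is symmetric. (c): apply the $e^{-h}$-rule (ii) through $\pi$ using $\pi\circ\sigma=f\circ\pi$ — the Markov property identifies $f$ restricted to $\locUnst[R_i]x\cap f\inv(R_j)$ (where $f(x)\in\int R_j$) with the map on itineraries that drops the leading symbol, carrying the forward words from $i$ that begin with $j$ onto the forward words from $j$, and on this map $\nu^+_i$ pushes forward to $e^{-h}\nu^+_j$; the stable identity is symmetric, with $f\inv$, $\sigma\inv$ in place of $f$, $\sigma$. (d): push forward (i): since $\mu=\pi_*\nu$ is Bowen's measure and on $R_i$ the bracket chart $(y,z)\mapsto\locUnst[\eta]y\cap\locStab[\eta]z$ corresponds to the past--future splitting of $[i]$, the product $\mu^u_x\times\mu^s_x$ is locally equal to $\mu$.

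I expect the main obstacle to be the bookkeeping forced by the non-injectivity of $\pi$. The $\pi$-critical set meets the local stable and unstable manifolds, and although it is $\nu$-null (it maps into $\bigcup_j\partial R_j$, a $\mu$-null set), one must still check that the pushed-forward, glued measures are well defined, agree on rectangle overlaps, are independent of the Markov partition, and correctly describe the $\epsilon$-local manifolds appearing in the statement rather than the $R_i$-local ones — the last reconciliation costing a uniformly bounded number of iterations of $f$. A more intrinsic route, avoiding symbolic dynamics, would be to start from Bowen's measure $\mu$, disintegrate it along the measurable partition into local unstable manifolds, and deduce (b)--(d) from the Gibbs (local-product) property of $\mu$ together with $f_*\mu=\mu$; this trades the combinatorial bookkeeping for the technicalities of Rokhlin disintegration and monotone limits of partitions.
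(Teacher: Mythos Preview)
The paper does not prove this theorem; it is quoted from Ruelle--Sullivan \cite{MR0415679} and used as a black box (the sentence preceding the statement reads ``The following is adapted from \cite{MR0415679}''), so there is no in-paper proof to compare against.

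That said, your approach is correct and is essentially the standard way this result is established. The reduction to an irreducible subshift via a Markov partition, the explicit Parry measure and its one-sided conditionals $\nu^\pm_i$, the scaling $\sigma_*\nu^+_i=e^{-h}\nu^+_j$ on admissible transitions, and the identification of Bowen's measure as $\pi_*\nu$ are all right, and your verifications of (a)--(d) are sound. The bookkeeping issues you flag---well-definedness under the boundedly-finite-to-one $\pi$, gluing across rectangle overlaps, and reconciling $\epsilon$-local manifolds with $R_i$-local ones---are real but routine: the key point is that the $\pi$-failure set sits in $\bigcup_j\partial R_j$, which is $\mu$-null and moreover null for each conditional $\nu^\pm_i$, so the pushed-forward measures agree on overlaps. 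Independence of the Markov partition follows a posteriori from the characterization in (d), since Bowen's measure is unique. The original Ruelle--Sullivan argument is closer to the ``intrinsic route'' you sketch at the end: they work directly with the Gibbs (local-product) property of the equilibrium state for the zero potential and read off the transverse measures from that, rather than passing through symbolic dynamics; the two approaches are equivalent and each is a short step from the other.
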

By \ref{thm:RSS}\ref{RSS:CI} we drop the subscript and simply write $\mu^\sigma$.  By additivity, we may extend the definition of $\mu^\sigma$ to any set $K\subset W^\sigma (x)$ for $\sigma\in \{s,u \}$.  The following properties of $\mu^\sigma$ are corollaries to the proof of Theorem \ref{thm:RSS} in \cite{MR0415679}.  
\begin{corollary}\label{cor:toRSS}
Let $\Omega$ be a basic set with an infinite number of points.   Then for $\sigma\in \{s,u\}$
\begin{itemize}
\item[---] $\mu^\sigma$ is non-atomic and positive on non-empty open sets in $W^\sigma( x)\cap \Lambda$;
\item[---] $\mu^\sigma(K)$ is finite for  sets $K\subset W^\sigma( x)$ compact in the internal topology of $W^\sigma(x)$.
\end{itemize}
\end{corollary}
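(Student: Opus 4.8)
The plan is to read off all three properties from the local description of $\mu^\sigma$ in Theorem~\ref{thm:RSS}: the support identity \ref{thm:RSS}(a), invariance under canonical isomorphism \ref{thm:RSS}\ref{RSS:CI}, the scaling relations of \ref{thm:RSS}(c) (which, with subscripts dropped, read $f_*\mu^u=e^{-h}\mu^u$ and $f\inv_*\mu^s=e^{-h}\mu^s$), and the fact \ref{thm:RSS}(d) that $\mu^u\times\mu^s$ is locally Bowen's (probability) measure. Throughout I would use that $h:=h_{\mathrm{top}}(\restrict f\Omega)>0$: coding $\restrict f\Omega$ by a Markov partition gives a finite-to-one factor map onto a transitive subshift of finite type $\Sigma$, so $h=h_{\mathrm{top}}(\Sigma)$; since $\Omega$ is infinite, $\Sigma$ is not a single periodic orbit, and a transitive subshift of finite type which is not a single periodic orbit has positive entropy.

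I would first establish the local finiteness statement that $\mu^\sigma(\locStab[\epsilon]q\cap\Omega)<\infty$ and $\mu^\sigma(\locUnst[\epsilon]q\cap\Omega)<\infty$ for every $q\in\Omega$. Cover the compact set $W^\sigma_\epsilon(q)\cap\Omega$ by finitely many $\sigma$-slices $W^\sigma_{R_i}(q_i)$ of small rectangles $R_i$; by \ref{thm:RSS}(d) and (a) one has $\mu^u(W^u_{R_i}(q_i))\,\mu^s(W^s_{R_i}(q_i))=\mu(R_i)\le 1$ with both factors strictly positive, hence $\mu^\sigma(W^\sigma_{R_i}(q_i))<\infty$, and \ref{thm:RSS}\ref{RSS:CI} lets one add these finitely many pieces. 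The general finiteness assertion then follows from Claim~\ref{clm:compactInside}: a set $K\subset W^\sigma(x)$ compact in the internal topology lies in a rectangle $R$, so $K\subset R\cap W^\sigma(p)=W^\sigma_R(p)\subset W^\sigma_\epsilon(p)$ for any $p\in K$ and therefore has finite $\mu^\sigma$-measure.

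For positivity I would write $W^u(x)=\bigcup_{n\ge 0}f^n\bigl(W^u_\epsilon(f^{-n}(x))\bigr)$ as an increasing union, and symmetrically $W^s(x)=\bigcup_{n\ge 0}f^{-n}\bigl(W^s_\epsilon(f^n(x))\bigr)$. Given a non-empty relatively open $O\subset W^u(x)\cap\Omega$, pick $y\in O$; then $f^{-n}(y)\in W^u_\epsilon(f^{-n}(x))$ for all large $n$, so $f^{-n}(O)\cap W^u_\epsilon(f^{-n}(x))$ is a non-empty relatively open subset of $\supp\restrict{\mu^u}{W^u_\epsilon(f^{-n}(x))}=W^u_\epsilon(f^{-n}(x))\cap\Omega$ by \ref{thm:RSS}(a), hence has positive $\mu^u$-measure; by the scaling relation \ref{thm:RSS}(c) this forces $\mu^u(O)>0$ as well. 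The case $\sigma=s$ is identical with $f$ replaced by $f\inv$.

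The last and, to my mind, most delicate property is non-atomicity, which I would prove by contradiction. By \ref{thm:RSS}(a) an atom of $\mu^u$ can only sit at a point of $\Omega$, so suppose $\mu^u(\{p\})=c>0$ with $p\in\Omega$. The scaling relation gives $\mu^u(\{f^n(p)\})=e^{nh}c$ for all $n\in\Z$, and since $h>0$ these values are pairwise distinct; hence the points $f^n(p)$ are pairwise distinct, so $\{f^n(p):n\ge 0\}$ is an infinite subset of the compact set $\Omega$ and accumulates at some $q\in\Omega$. Let $S$ be the infinite set of $n\ge 0$ with $d(f^n(p),q)<\delta$, and for $n\in S$ put $a_n:=\locUnst[\eta]q\cap\locStab[\eta]{f^n(p)}\in\Omega$. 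Since $a_n\in\locStab[\eta]{f^n(p)}$ and $f^n(p)\in\locStab[\eta]{a_n}$, the singletons $\{a_n\}\subset\locUnst[\eta]{a_n}$ and $\{f^n(p)\}\subset\locUnst[\eta]{f^n(p)}$ are canonically isomorphic (Definition~\ref{def:CI}), so \ref{thm:RSS}\ref{RSS:CI} gives $\mu^u(\{a_n\})=\mu^u(\{f^n(p)\})=e^{nh}c$; in particular the $a_n$ with $n\in S$ are pairwise distinct. As $f^n(p)\to q$ along $S$ and the local product holonomy is continuous, $a_n\to q$, so $\{a_n:n\in S\}\cup\{q\}$ is compact in the internal topology of $W^u(q)$ and hence carries finite $\mu^u$-mass by the finiteness statement --- contradicting $\sum_{n\in S}\mu^u(\{a_n\})=\sum_{n\in S}e^{nh}c=\infty$. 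Running the same argument for $f\inv$ rules out atoms of $\mu^s$. The step I expect to cost the most effort is exactly this passage from a single atom of $\mu^u$ to infinitely many atoms of infinite total mass inside one internally compact arc of an unstable leaf --- that is, correctly marrying the recurrence of the orbit of $p$ to the local product structure and to the canonical-isomorphism invariance of $\mu^u$; the remaining steps are routine bookkeeping with Theorem~\ref{thm:RSS} and Claim~\ref{clm:compactInside}.
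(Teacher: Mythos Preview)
Your argument is correct. The paper itself does not give a proof of this corollary at all: it simply declares that these properties ``are corollaries to the proof of Theorem~\ref{thm:RSS} in \cite{MR0415679}'' and moves on. What you have written is therefore not a reproduction of the paper's proof but a genuine self-contained derivation from the \emph{statement} of Theorem~\ref{thm:RSS}, which is more than the paper supplies. Your use of (d) to bound $\mu^u(W^u_R)\,\mu^s(W^s_R)\le 1$ with both factors positive is the clean way to get local finiteness, and the pull-back argument for positivity is standard. The non-atomicity argument is the interesting part, and your combination of the scaling relation (c), recurrence of the forward orbit, and holonomy invariance (b) to pile up infinitely many atoms of total mass $\sum e^{nh}c=\infty$ inside a single local unstable plaque of finite mass is exactly right. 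One cosmetic point: in the finiteness step you invoke Claim~\ref{clm:compactInside}, which is stated for $K\subset W^\sigma(x)\cap\Lambda$; since $\mu^\sigma$ is supported on $\Lambda$ you may freely replace $K$ by $K\cap\Lambda$, so this is harmless.
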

Furthermore, in the case $\restrict{\dim E^u} {\Lambda} =1$, we have the following.
\begin{corollary}\label{cor:toRSS2}
Let $\Lambda$ be a hyperbolic attractor such that $\restrict{\dim E^u} {\Lambda} =1$.  Then for any  connected set  $K\subset \unst x$ (that is, an interval) we have  $\mu^u(K)<\infty$
if and only if its closure $\overline {K}$ in $\unst x$ is compact in the internal topology of $\unst x$.
\end{corollary}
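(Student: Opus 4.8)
The plan is to prove the two implications separately; one is immediate and the other will come down to a quantitative version of the positivity of $\mu^u$ together with compactness. If $\overline{K}$, the closure of $K$ in the internal topology of $\unst x$, is compact, then by the second item of Corollary \ref{cor:toRSS} we have $\mu^u(\overline{K})<\infty$, hence $\mu^u(K)<\infty$. For the converse I will argue the contrapositive: assuming $\overline K$ is not compact, I will show $\mu^u(K)=\infty$. Since $\restrict{\dim E^u}{\Lambda}=1$ and $\Lambda$ is an attractor, $\unst x\subset\Lambda$ is a one-dimensional, connected, injectively immersed complete line, so I identify $\unst x$ isometrically with $\R$ via arc length in the adapted metric (writing $t\mapsto p_t$, $p_0=x$); the internal topology is then the usual topology on $\R$, $K$ is an interval, and $\overline K$ fails to be compact exactly when $K$ is unbounded, i.e.\ $K$ contains a closed ray $\{p_t:t\ge a\}$ or $\{p_t:t\le a\}$. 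Because $\mu^u$ is finite on compact intervals (Corollary \ref{cor:toRSS}), it suffices to show that \emph{each of the two ends of $\unst x$ carries infinite $\mu^u$-mass}, and for this it is enough to exhibit, for each end, a single ray towards it of infinite measure.

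The heart of the argument is a uniform lower bound: there is $c_0>0$ with $\mu^u(I_\epsilon(q))\ge c_0$ for every $q\in\Lambda$, where $I_\epsilon(q)\subset\unst q$ is the open arc-length ball of radius $\epsilon$ about $q$. First I will check $I_\epsilon(q)\subset\locUnst[\epsilon]{q}$: for $y\in I_\epsilon(q)$ and $n\ge 0$ the differential $Df^{-n}$ contracts $E^u$ by a factor at most $\kappa^n$, so the arc-length from $f^{-n}(q)$ to $f^{-n}(y)$ along $\unst{f^{-n}(q)}$ is $<\kappa^n\epsilon\le\epsilon$, whence $d(f^{-n}(q),f^{-n}(y))<\epsilon$ and $y\in\locUnst[\epsilon]{q}$. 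As $\Lambda$ is an attractor, $\locUnst[\epsilon]{q}\subset\Lambda$, so $\mu^u$ restricts to $I_\epsilon(q)$ and, by positivity of $\mu^u$ on nonempty open subsets of $\unst q\cap\Lambda=\unst q$ (Corollary \ref{cor:toRSS}), $\mu^u(I_\epsilon(q))>0$. To promote this to a uniform bound I will invoke the continuous dependence, in the weak-$*$ topology, of the local unstable manifolds and of the conditional measures $\mu^u_q$ on the base point $q$ --- a property that can be read off from the construction carried out in the proof of Theorem \ref{thm:RSS} in \cite{MR0415679} --- which makes $q\mapsto\mu^u(I_\epsilon(q))$ lower semicontinuous on the compact set $\Lambda$, so that $c_0:=\inf_{q\in\Lambda}\mu^u(I_\epsilon(q))>0$.

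Granting the bound, the conclusion follows by a packing argument: the intervals $I_\epsilon(p_{3j\epsilon})$ for $j=0,1,2,\dots$ are pairwise disjoint subsets of $\{p_t:t\ge-\epsilon\}\subset\unst x$, each of $\mu^u$-measure at least $c_0$, so $\mu^u(\{p_t:t\ge-\epsilon\})\ge\sum_{j\ge0}c_0=\infty$; symmetrically, the intervals $I_\epsilon(p_{-3j\epsilon})$ show $\mu^u(\{p_t:t\le\epsilon\})=\infty$. Hence both ends of $\unst x$ carry infinite mass, so any unbounded interval $K$ does too. The step I expect to demand the most care is precisely the uniform lower bound $c_0>0$: one must extract from \cite{MR0415679} the weak-$*$ continuity in $q$ of the Ruelle--Sullivan conditionals, turning the qualitative positivity of $\mu^u$ on local unstable disks into a statement uniform over $\Lambda$. (An alternative route to $c_0>0$, if one prefers to avoid invoking continuity directly, is to use the spreading of local unstable manifolds under forward iteration together with the canonical-isomorphism invariance in Theorem \ref{thm:RSS}\ref{RSS:CI} and the scaling relation $f_*\mu^u=e^{-h}\mu^u$ to compare $\mu^u(\locUnst[\epsilon]{q})$ with the $\mu^u$-mass of a fixed reference disk.) Everything else is either a one-line consequence of Corollary \ref{cor:toRSS} or elementary one-dimensional bookkeeping.
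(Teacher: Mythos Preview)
Your argument is correct, but it is considerably more elaborate than the paper's, which dispatches the nontrivial direction in one line. The paper simply observes that if $\overline K$ is not compact in $\unst x$ then the (unbounded) interval $K$ must pass through some fixed rectangle $R$ of a finite Markov partition countably many times; by canonical-isomorphism invariance (Theorem~\ref{thm:RSS}\ref{RSS:CI}) each such crossing contributes exactly $\mu^u\big(W^u_R(z)\big)>0$ for any fixed $z\in R$, so $\mu^u(K)=\infty$. This sidesteps entirely the need for a uniform lower bound on $\mu^u(I_\epsilon(q))$: the rectangle structure together with holonomy invariance already packages both the positivity and the uniformity.

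Your route instead extracts the uniform bound $c_0>0$ from weak-$*$ continuity of the Ruelle--Sullivan conditionals (or, as you note parenthetically, from the scaling relation and canonical isomorphism), and then runs a disjoint-interval packing. This is a perfectly legitimate strategy, and it has the minor advantage of not invoking the Markov partition explicitly; but the continuity of $q\mapsto\mu^u_q$ is an external fact not stated in the paper, and your alternative derivation via canonical isomorphism and $f_*\mu^u=e^{-h}\mu^u$ is, once unwound, essentially the paper's rectangle argument in disguise. In short: both approaches work, but the paper's is the more economical one here, trading your analytic continuity input for the combinatorics of rectangles already in hand.
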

\begin{proof}
If $\overline K$ is not compact in $\unst x$, then $K$ passes through some  rectangle  a countable number of times which implies that $\mu^u(K)=\infty$. 
\end{proof}

\section{Limits of directed and inverse systems}
We review basic constructions and properties of the direct and inverse limit objects in algebra and topology. 
\subsection{Direct limits}\label{sec:dir}
Given a topological space $X$ and an injective continuous map $f\colon X\to X$ we construct the \emph{direct  limit} 
\[\varinjlim(X, f):=\varinjlim\{X \xrightarrow{f}X \xrightarrow{f}X \xrightarrow{f} \dots \}\]
as follows.  Endow $\N$ with the discrete topology and introduce the equivalence relation on $X \times \N$ generated by the relation $(x, k) \sim (f(x), k+1)$.  Then we define 
\[\varinjlim(X,f):=(X \times \N)/{\sim}.\]  
The map $f\colon X\to X$ naturally induces a homeomorphism $\tau_f\colon \varinjlim(X,f) \to\varinjlim(X,f)$ by \[\tau _f\colon [(x, m)]\mapsto [(f(x), m)].\]
Note, that for $m\ge 1$ we have $\tau_f ([(x, m)] ) = [(x, m-1)]$, whence it is natural to refer to $\tau_f$ as the \emph{left shift} on $\varinjlim(X, f)$.  

We present an alternate, more explicit, construction of the set $\varinjlim(X, f)$. 
For every $j\in \N$ define a homeomorphism \[h_j\colon X_j \to X\] and consider the inclusion $i_j\colon X_j \hookrightarrow X_{j+1}$ given by $ i_j = h_{j+1} \inv \circ f\circ h_j$. 
We then have $X_0\subset X_1\subset X_2\subset \dots$ whence we define \begin{align}\label{eq:DL} \varinjlim\{X  \xrightarrow{f}X \xrightarrow{f}\dots \} = \bigcup_{n\in \N} X_n.\end{align} 
When the map $f\colon X\to X$ is open, the inclusions  $i_j$ induce a nested inclusion of topologies, the union of which correctly reconstructs the topology of the direct limit.  
 Given $\xi\in\varinjlim(X,f)$, we have that $\xi \in X_j$ for some $j$ whence we may define \[\tau_f (\xi) = h_j\inv \circ f \circ h_j(\xi).\]  One verifies this definition of $\tau_f$ coincides with that above.  

By the second construction, we see that if $X$ is a $C^r$ manifold and $f$ a $C^r$ embedding, then $\varinjlim(X,f)$ can be endowed with a $C^r$ differential structure under which $\tau_f$ is a $C^r$ diffeomorphism.  

Given a group $G$ and a homomorphism $h\colon G \to G$ we define 
\[\varinjlim(G, h) :=\varinjlim\{G \xrightarrow{h}G \xrightarrow{h}G \xrightarrow{h} \dots \}\]
as follows.  Let $i_j\colon G_j\to G$ be a group isomorphism and define $h_j \colon G_j \to G_{j+1}$ by $h_j\colon g\mapsto i_{j+1} \inv (h(i_j(g)))$.  
Let $N$ be the normal subgroup of $\bigoplus_{k \in \N} G_k$ generated by the elements $\{ g_j\inv h_j(g_j)\}$ for $g_j \in G_j$.   
Then 
\[\varinjlim(G, h)  = \Big(\bigoplus_{k \in \N} G_k\Big) / N \]
with canonical left shift automorphism $\tau_h$ given by $\tau_h\colon [g_j] \mapsto  [i_j \inv \circ h\circ i_j(g_j)]$.  We notate $[(g,m)] := g_m+ N$ for $g_m\in G_m$.  

The following proposition is straightforward from the Van Kampen theorem.  
\begin{proposition}
Let $X$ be a connected manifold, $f\colon X\to X$ an embedding, and $G= \pi_1(X)$.  Then \begin{itemize}
\item[---] $\pi_1(\varinjlim(X,f)) = \varinjlim(G, f_*)$
\item[---] $(\tau_f)_*$ is the map given by \((\tau_f)_*([(g,m)]) = [(f_*(g),m) ].\)
\end{itemize}
\end{proposition}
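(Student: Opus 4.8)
The plan is to run everything through the explicit model $\varinjlim(X,f)=\bigcup_{n\in\N}X_n$ from \eqref{eq:DL}, in which $h_n\colon X_n\to X$ are the chosen homeomorphisms, $X_0\subset X_1\subset\cdots$, and $i_n=h_{n+1}\inv\circ f\circ h_n$ is the inclusion. The first thing to record is that $f$ is an open map: being a topological embedding of a manifold into a manifold of equal dimension, it is open by invariance of domain, so (as noted after \eqref{eq:DL}) this model really does carry the direct limit topology, and the $X_n$ form an increasing open cover of $\varinjlim(X,f)$. I would then fix a basepoint $y_0\in X_0$; it lies in every $X_n$ and is preserved by all the inclusions $i_n$, and setting $p_n:=h_n(y_0)\in X$ the relation $h_{n+1}\circ i_n=f\circ h_n$ gives $p_{n+1}=f(p_n)$, so the basepoints run along a forward $f$-orbit.

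For the first assertion the crucial input is that $\pi_1$ commutes with this directed colimit: a loop $S^1\to\varinjlim(X,f)$ and a null-homotopy $D^2\to\varinjlim(X,f)$ each have compact image, hence (as the $X_n$ are an increasing open cover) factor through some $X_n$, so $\pi_1\big(\varinjlim(X,f),y_0\big)=\varinjlim_n\pi_1(X_n,y_0)$ with the inclusion-induced transition maps. This is precisely the van Kampen theorem applied to the exhaustion $\{X_n\}$. Transporting along the homeomorphisms $h_n$ and using $h_{n+1}\circ i_n=f\circ h_n$ together with $f(p_n)=p_{n+1}$, one gets $(h_{n+1})_*\circ(i_n)_*=f_*\circ(h_n)_*$, so under the isomorphisms $(h_n)_*\colon\pi_1(X_n,y_0)\xrightarrow{\sim}\pi_1(X,p_n)$ the direct system becomes $\{\pi_1(X,p_n)\xrightarrow{f_*}\pi_1(X,p_{n+1})\}$. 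Finally I would fix a path $\delta$ from $p_0$ to $p_1$ in $X$ and propagate it by $f$ — using $f^n(\delta)$ to pass from $p_n$ to $p_{n+1}$ — to identify every $\pi_1(X,p_n)$ with $G=\pi_1(X,p_0)$; by naturality of the change-of-basepoint isomorphisms under $f$, each transition map then becomes the single endomorphism $f_*\colon G\to G$, so $\varinjlim_n\pi_1(X_n,y_0)$ is exactly $\varinjlim(G,f_*)$, and tracing the identifications a loop lying in $X_m$ whose class corresponds to $g\in G$ represents $[(g,m)]$.

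For the second assertion I would use that the restriction of $\tau_f$ to $X_m$ is $h_m\inv\circ f\circ h_m$, which carries $X_m$ into $X_m$ and is conjugate to $f$ by $h_m$. Hence for a loop $\gamma$ in $X_m$ representing $[(g,m)]$, the loop $\tau_f\circ\gamma$ again lies in $X_m$ and satisfies $(h_m)_*[\tau_f\circ\gamma]=f_*\,(h_m)_*[\gamma]$; feeding this through the same basepoint identifications as above (this is where their $f$-equivariance is used) yields $[\tau_f\circ\gamma]=[(f_*(g),m)]$, i.e.\ $(\tau_f)_*[(g,m)]=[(f_*(g),m)]$, which is exactly the shift automorphism $\tau_{f_*}$ on $\varinjlim(G,f_*)$ from Section~\ref{sec:dir}. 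The one point requiring care — and the main obstacle to a fully rigorous write-up — is the basepoint bookkeeping: since neither $f$ nor $\tau_f$ fixes a basepoint in general, one must propagate a single connecting path forward by $f$ in order to realize the colimit of fundamental groups literally as the system $\{G\xrightarrow{f_*}G\xrightarrow{f_*}\cdots\}$ rather than as a system twisted by conjugations, and then check that $\tau_f$ respects those choices; openness of $f$, the colimit computation for $\pi_1$, and the functoriality needed for $\tau_f$ are all routine.
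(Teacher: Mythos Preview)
Your argument is correct and is exactly the route the paper has in mind: the paper gives no proof at all, merely asserting that the proposition ``is straightforward from the Van Kampen theorem,'' and your write-up supplies precisely those straightforward details (openness of $f$ via invariance of domain, $\pi_1$ commuting with the directed union by compactness of $S^1$ and $D^2$, and the basepoint bookkeeping needed to identify the transition maps with a single $f_*$). Nothing further is needed.
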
 
The construction above allows us to embed every hyperbolic attractor as an attractor for an ambient diffeomorphism.  
\begin{claim}\label{rem:Intertible} 
Let $\Lambda\subset U\subset M$ be a hyperbolic attractor for a $C^r$ embedding $f\colon U\to M$.  Then there is a  $C^r$ diffeomorphism $f'\colon M'\to M'$, a hyperbolic attractor $\Lambda'\subset M'$ for $f'$, neighborhoods $N$ and $N'$ of $\Lambda$ and $\Lambda'$ respectively, and a $C^r$ diffeomorphism $h\colon N\to N'$ so that $h(\Lambda) = \Lambda'$ and $h\circ \restrict f N = \restrict {f'}{N'} \circ h$.
\end{claim}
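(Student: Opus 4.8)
The plan is to replace the non-invertible map $f \colon U \to M$ by the shift map on a finite-dimensional inverse limit, using the direct-limit machinery of Section \ref{sec:dir} applied to the unstable foliation. First I would observe that, since $\Lambda$ is a hyperbolic attractor, $f$ is already a local diffeomorphism onto its image in a neighborhood of $\Lambda$; the obstruction to invertibility is only that $f$ need not be injective on $U$, and that the codimension of $f(U)$ in $M$ may be positive. The idea is to build $M'$ as a thickening of a neighborhood of $\Lambda$ in the mapping-torus-like inverse limit $\varprojlim(N, \restrict f N)$, where $N$ is chosen as an ``isolating'' neighborhood of $\Lambda$ with $f(N) \subset N$ (possible since $\Lambda$ is an attractor with $\bigcap_{n} f^n(V) = \Lambda$).

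The key steps, in order, would be: (1) choose a compact neighborhood $N$ of $\Lambda$ with $f(\overline N) \subset \int N$ and $\bigcap_{n \ge 0} f^n(N) = \Lambda$; (2) by shrinking $N$ if necessary and using a tubular neighborhood, arrange that $N$ embeds as the zero-section thickening of the pulled-back normal bundle so that $f$ extends to a map that is a fiberwise contraction in the ``missing'' normal directions — concretely, one enlarges $M$ near $\Lambda$ by a bundle $E \to N$ of rank $\dim M - \dim f(U)$ with a contraction, producing a local diffeomorphism $\hat f$ of a manifold $\hat N \supset N$; (3) pass to the direct limit $M' := \varinjlim(\hat N, \hat f)$ with its $C^r$ structure and the diffeomorphism $f' := \tau_{\hat f}$, as justified in Section \ref{sec:dir} (the second, explicit construction shows $M'$ carries a $C^r$ structure making $\tau_{\hat f}$ a $C^r$ diffeomorphism, since $\hat f$ is an open $C^r$ embedding); (4) identify $\Lambda$ with its image $\Lambda' := [(\Lambda, 0)]$ under the canonical inclusion $\hat N = \hat N_0 \hookrightarrow M'$, take $h$ to be this inclusion restricted to a small neighborhood $N_0$ of $\Lambda$ in $\hat N$, and check $h \circ \restrict f {N_0} = \restrict{f'}{h(N_0)} \circ h$ directly from the formula for $\tau_{\hat f}$; (5) verify that $\Lambda'$ is a hyperbolic attractor for $f'$: hyperbolicity is inherited since $f'$ agrees with $f$ on $\Lambda$ in the unstable and original stable directions and is a contraction in the added normal directions, which simply augments $E^s$; and the attractor property follows because a neighborhood of $\Lambda'$ in $M'$ is, up to the direct-limit identifications, a neighborhood of $\Lambda$ in $\hat N$, and $\hat f$ still contracts it toward $\Lambda$.

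The main obstacle I expect is step (2): making precise the passage from the possibly high-codimension, non-injective map $f$ to a genuine open $C^r$ embedding $\hat f$ of a manifold $\hat N$ of the right dimension, while preserving both the hyperbolic splitting over $\Lambda$ and the nesting $\hat f(\overline{\hat N}) \subset \int \hat N$ needed for the attractor property. One clean way to organize this is to note that $f$ restricted to a neighborhood of $\Lambda$ factors as an open embedding into its image composed with the inclusion of a submanifold; one then takes $\hat N$ to be a tubular neighborhood of that submanifold inside $M$ (if the codimension is already zero, $\hat N = N$ and there is nothing to add), and extends $f$ to $\hat N$ by precomposing with the linear contraction along the tube fibers. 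After this, steps (3)--(5) are formal consequences of the direct-limit construction and its functoriality, together with the definition of hyperbolic attractor given in Section 2; the verification that $\mu^\sigma$-type structures or Markov partitions transfer is not needed here, so the remaining checks are routine.
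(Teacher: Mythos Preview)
Your core approach --- form $M' := \varinjlim(N, f)$ for a suitable neighborhood $N$ of $\Lambda$ with $f(N)\subset N$ and take $f' = \tau_f$ --- is exactly the paper's proof; the paper simply chooses $N = \bigcup_{y\in\Lambda} W^s_\epsilon(y)$, for which $f(N)\subset N$ follows at once from $f(W^s_\epsilon(f^{-1}(x)))\subset W^s_\epsilon(x)$, and then invokes the direct-limit construction of Section~\ref{sec:dir} in one line. However, your diagnosis of the obstruction is off: since $U\subset M$ is open and $f\colon U\to M$ is a $C^r$ embedding, $f$ is already injective and $f(U)$ is open in $M$ (codimension zero), so your step (2) and the normal-bundle thickening address a non-problem and collapse to $\hat N = N$; the only defect of $f$ is non-surjectivity onto $U$, which is precisely what the direct limit repairs. (Your opening reference to $\varprojlim$ is likewise a slip --- the construction you actually describe and carry out is the direct limit.)
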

\begin{proof}
Let $N= \bigcup_{y\in \Lambda} \locStab y$.  Then  $f\colon N\to N$ is a $C^r$ embedding.  Take $X = N$ and $M':= \varinjlim (X, f)$.  Then we have a canonical inclusion $\Lambda \subset X_0\cong N$, where $X_0$ is as in \eqref{eq:DL}.  But then $\Lambda \subset X_0 \subset M' $ is a hyperbolic attractor for the $C^r$ diffeomorphism $\tau_f\colon M'\to M'$.  
\end{proof}

Note that in constructing the direct limit, we assumed the map  $f\colon X\to X$  was injective to avoid pathological topological properties in the limiting object.  

\subsection{Inverse limits}
Let $f\colon X\to X$ be a continuous map (which we typically take to be surjective).  We then define the \emph{inverse limit}   \[\varprojlim(X,f):= \varprojlim\{X \xleftarrow{f}X \xleftarrow{f}X \xleftarrow{f} \dots \}\] to be the subset of $X^\N:=\prod_{i\in \N} X$
satisfying \[(x_0, x_1, x_2, \dots) \in \varprojlim(X,f) \mathrm{\  if\ } x_j = f(x_{j+1})\] for all $j\in \N$.  We then have an induced homeomorphism $\sigma_f\colon \varprojlim(X,f)\to \varprojlim(X,f)$ given by \[\sigma_f\colon(x_0, x_1, x_2, \dots) \mapsto(f(x_0), f(x_1), f(x_2), \dots) =(f(x_0), x_0, x_1, x_2, \dots)\] hence it is natural to call $\sigma_f$ a \emph{right shift} map.  
We will call the topological object $\varprojlim(X,f)$ a \emph{generalized solen\-oid}.  

Note that even in the case that $X$ is a manifold and $f$ is a smooth map, we do not expect $\varprojlim(X,f)$ to have a manifold structure.  Indeed in the case that $f$ is a $C^\infty$ covering with degree  greater than $1$, the limit $\varprojlim(X,f)$ will locally be the product of a Cantor set and a manifold.  

If $G$ is a group, and $h$ a 
homomorphism we define 
\[\varprojlim(G,h):= \varprojlim\{G\xleftarrow{h}G \xleftarrow{h}G \xleftarrow{h} \dots \}\]
to be the subgroup of $\prod_{n\in \N} G$ satisfying  $(g_0, g_1, g_2, \dots ) \in \varprojlim(G,h) $ if $g_j = h(g_{j+1})$, with the induced right shift automorphism \[\sigma_h\colon (g_0, g_1, g_2, \dots )\mapsto (h(g_0), g_0, g_1, g_2, \dots ).\]


\section{Toral solen\-oids}\label{sec:AS}
We give a brief introduction to \emph{toral solen\-oids}, the compact abelian groups obtained as the algebraic models in the conclusion of Theorem \ref{thm:main}.  For more detailed exposition, see, for example, \cite{MR1407604}. 
For an explicit construction of toral solen\-oids embedded as hyperbolic attractors for differentiable dynamics, see \cite{MR1254137}.

Let $A\in \Mat(k, \Z)$ have non-zero determinant.  Then considering the standard torus $\T^k :=\R^k/\Z^k$ as a compact abelian group, the map $A\colon \R^k\to \R^k$ induces an  endomorphism $A\colon \T^k\to \T^k$.  We define a \emph{toral solen\-oid} $\sol_A$ to be the topological group obtained via the inverse limit 
\[\sol_A :=\varprojlim(\T^k,{A}) = \varprojlim\{\T^k \xleftarrow{A} \T^k\xleftarrow{A} \dots \}. \]
Note the above inverse limit is taken both as a limit of topological and algebraic objects, and that $\sol_A$ inherits the right shift automorphism $\sigma_A\colon \sol_A \to \sol_A$ \[\sigma_A\colon(x_0, x_1, x_2, \dots) \mapsto (Ax_0, x_0, x_1, \dots).\]

$\sol_A$ will fail to be a manifold in the case when $|\det (A)|>1$, in which case we will call $\sol_A$ \emph{proper}.  Let $\mathcal C_\xi$ denote the path component of $\xi$ in $\sol_A$.  Then, even in the case $|\det (A)|>1$, we can endow the  path components $\{\mathcal {C}_\xi\} $ with the smooth Euclidean structure pulled back from the projection to the zeroth coordinate $\sol_A\to\T^k$.  With respect to this Euclidean structure the map $\sigma_A\colon \mathcal C_\xi \to \mathcal C_{\sigma_A(\xi)}$ is smooth.  
Furthermore, in the case when $A$ has no eigenvalues of modulus 1, the map $\sigma_A\colon \mathcal C_\xi \to \mathcal C_{\sigma_A(\xi)}$ is  hyperbolic with respect to the pull-back metric, whence we say $\sigma_A$ is \emph{leaf-wise hyperbolic}.

\subsection{$\R^k$ and $\Z^k$ actions}
We now define an $\R^k$ action  and an induced $\Z^k$ action on $\sol_A$.  (Compare to the immersions  of 
$\R^k$ and $\Z^k$ into $\sol_A$ constructed in \cite{MR1407604}).

\begin{definition}\label{def:action}

We define the $\R^k$ action $\theta\colon \R^k\times \sol_A\to \sol_A$ by the rule
\begin{align}\label{eq:theta}
\theta_v \colon  ([y_0], [y_1], [y_2], \dots) \mapsto ([y_0+v], [ y_1+ A^{-1} v], [y_2+ A^{-2} v],  \dots)
\end{align}
for $v\in \R^k$ and $ ([y_0], [y_1], [y_2], \dots) \in \sol_A$ where $[y]$ denotes the class of $y\in \R^k$ in the quotient $\T^k = \R^k/\Z^k$.

We then define the  $\Z^k$ action $\vartheta\colon \Z^k\times \sol_A\to \sol_A$ to be the restriction of $\theta$ to the subgroup $\Z^k\subset\R^k$.  
\end{definition}
Let $p_0\colon \sol_A \to \T^k$ denote the projection in the zeroth coordinate.  
\begin{claim}\label{clm:Theta}
The action  $\theta$ has the following properties.
\begin{enumerate}[ label=\emph{\alph*)},  ref=\ref{clm:Theta}(\alph*)]
\item \label{Thetadense} For each $\xi\in \sol_A$, the $\theta$-orbit of $\xi$ 
is dense.
\item \label{ThetaEqui} The homomorphism $\sigma_A$ is $\theta$-equivariant; that is, \[\sigma_A (\theta_v(\xi)) = \theta_{Av}( \sigma_A (\xi))\] and   
\[\sigma_A \inv (\theta_v(\xi)) = \theta_{A\inv v}( \sigma_A\inv  (\xi)).\]  
\item  \label{ThetaProj} For all $\xi \in \sol_A$ and $v\in \R^k$ we have  \[  p_0(\theta_v( \xi))= p_0(\xi) + v .\]  
Here $[x] +v := [x+v]$ is the standard $\R^k$ action on $\T^k$.  
\item \label{commuteGroup} $\theta$ commutes with the group operation; that is, 
\[\theta_v(\xi+ \eta) = \theta_v(\xi) + \eta = \xi+ \theta_v(\eta)\]
\end{enumerate}
\end{claim}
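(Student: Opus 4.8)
The plan is to verify the four properties of $\theta$ directly from the defining formula \eqref{eq:theta}, since each is essentially a bookkeeping computation with the inverse-limit coordinates; the only point requiring genuine argument is the density claim \ref{Thetadense}.

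First I would dispose of the easy algebraic identities. For \ref{ThetaEqui}, I would simply apply $\sigma_A$ to $\theta_v(\xi)$ coordinate-by-coordinate: the zeroth coordinate of $\sigma_A(\theta_v(\xi))$ is $A[y_0+v] = [Ay_0 + Av]$, while the zeroth coordinate of $\theta_{Av}(\sigma_A(\xi))$ is $[Ay_0] + Av$, and for $j\ge 1$ the $j$-th coordinate of $\sigma_A(\theta_v(\xi))$ is the $(j-1)$-st coordinate of $\theta_v(\xi)$, namely $[y_{j-1}+A^{-(j-1)}v]$, which matches the $j$-th coordinate of $\theta_{Av}(\sigma_A(\xi))$ since $A^{-1}(A^{-(j-1)}v) = A^{-j}(Av)$; the identity for $\sigma_A^{-1}$ follows by applying this with $v$ replaced by $A^{-1}v$ and $\xi$ by $\sigma_A^{-1}(\xi)$. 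For \ref{ThetaProj}, the zeroth coordinate of $\theta_v(\xi)$ is by definition $[y_0+v] = p_0(\xi)+v$. For \ref{commuteGroup}, since $\sol_A$ is an abelian group under coordinate-wise addition and $\theta_v$ acts on the $j$-th coordinate by translation by the fixed element $[A^{-j}v]$, translation commutes with addition in each coordinate, giving $\theta_v(\xi+\eta) = \theta_v(\xi)+\eta = \xi+\theta_v(\eta)$; one also should note in passing that $\theta$ is indeed a well-defined $\R^k$-action, i.e.\ $\theta_0 = \mathrm{id}$ and $\theta_{v+w} = \theta_v\circ\theta_w$, which is immediate.

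The substantive step is \ref{Thetadense}. Here the plan is to show that the orbit closure $\overline{\theta_{\R^k}(\xi)}$ is all of $\sol_A$ by combining two observations. First, by \ref{commuteGroup} it suffices to prove density of the $\theta$-orbit of the identity $0\in\sol_A$: if $\overline{\theta_{\R^k}(0)} = \sol_A$, then for any $\xi$, $\overline{\theta_{\R^k}(\xi)} \supseteq \overline{\theta_{\R^k}(0)} + \xi = \sol_A$ (a translate of a dense set in a topological group is dense). Second, $\theta_{\R^k}(0)$ is precisely the "canonical copy of $\R^k$" sitting inside $\sol_A$, namely $\{([v],[A^{-1}v],[A^{-2}v],\dots) : v\in\R^k\}$, which is a subgroup of $\sol_A$; to see it is dense, I would use the standard description of $\sol_A$ as a quotient and the fact that a closed subgroup $H\le\sol_A$ containing this copy of $\R^k$ must project onto all of $\T^k$ under $p_0$ (since $p_0(\theta_v(0)) = v$ covers $\T^k$) and, being $\sigma_A$-invariant (by \ref{ThetaEqui}, $\sigma_A$ maps this $\R^k$-copy into itself hence preserves its closure), must also contain the kernel of $p_0$ in the limit — concretely, given a target point and a basic open set determined by finitely many coordinates $0,1,\dots,n$, one chooses $v\in\R^k$ so that $[A^{-j}v]$ is within $\epsilon$ of the prescribed $j$-th torus coordinate for each $j\le n$, which is possible because the finitely many maps $v\mapsto([v],[A^{-1}v],\dots,[A^{-n}v])$ assemble into a map $\R^k\to(\T^k)^{n+1}$ whose image is a dense subgroup (its closure is a closed subgroup of $(\T^k)^{n+1}$ surjecting onto the last factor $\T^k$ via an injective-on-the-relevant-subtorus linear map, forcing it to be everything). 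I expect this last density-on-finite-stages argument to be the main obstacle, as it requires care with the relation $x_j = A x_{j+1}$ defining the inverse limit; the cleanest route is probably to note that $\sol_A \cong (\R^k\times \prod_{j\ge 1}\T^k)/\Gamma$ for an appropriate lattice and read off density from that, or alternatively invoke that $p_0$ is surjective with compact totally disconnected kernel and that the $\R^k$-orbit already surjects under $p_0$ while its closure is $\sigma_A$-invariant and hence a full-measure closed subgroup, thus all of $\sol_A$.

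Once \ref{Thetadense}--\ref{commuteGroup} are in hand, the proof is complete; I would present \ref{ThetaEqui}, \ref{ThetaProj}, \ref{commuteGroup} as one-line verifications and devote the bulk of the write-up to \ref{Thetadense}.
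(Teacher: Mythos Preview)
Your treatment of \ref{ThetaEqui}, \ref{ThetaProj}, and \ref{commuteGroup} is exactly what the paper does: it simply says these ``follow from \eqref{eq:theta}.'' So there is nothing to compare there.

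For \ref{Thetadense} the paper does not argue at all: it just cites \cite[Proposition 2.4]{MR1407604}. You instead sketch a direct proof, which is more self-contained and in fact makes the claim elementary. Your reduction to the orbit of $0$ via \ref{commuteGroup} is fine. The one place your write-up is slightly off is the assertion that the map $v\mapsto([v],[A^{-1}v],\dots,[A^{-n}v])$ has image a \emph{dense} subgroup of $(\T^k)^{n+1}$: that is false, and also not what you need. The image is exactly the truncated inverse-limit subset $\{(z_0,\dots,z_n):z_j=Az_{j+1}\}$, and it is hit \emph{surjectively}: given such a tuple, lift $z_n$ to $y\in\R^k$ and take $v=A^n y$, so that $[A^{-j}v]=A^{n-j}[y]=z_j$ for each $j\le n$. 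Since the product topology on $\sol_A$ is generated by such finite-stage cylinders, surjectivity onto every finite stage gives density immediately. Once you make that correction, your argument for \ref{Thetadense} is clean and complete, and arguably preferable to deferring to an outside reference. Your alternative suggestions (closure is a $\sigma_A$-invariant closed subgroup, full measure, etc.) would also work but are more roundabout than the finite-stage surjectivity, so I would drop them.
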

\begin{proof}
\ref{Thetadense} is essentially  \cite[Proposition 2.4]{MR1407604}.  \ref{ThetaEqui}, \ref{ThetaProj}, \ref{commuteGroup} follow from \eqref{eq:theta}.  
\end{proof}

Define  $\Sigma$ to be the 0-dimensional compact group $\Sigma :=p_0 \inv\big( [0]\big)$.  Note $\Sigma$ is either the trivial group $\{1\}$ in the case that $\det (A)=\pm 1$,  or homeomorphic to a Cantor set in the case $|\det( A) |>1$.  By \cite[Corollary 2.3]{MR1407604}  the map $p_0\colon \sol_A \to \T^k$ defines a principle $\Sigma$-bundle.  

Let $p\colon \R^k\to \T^k$ denote the canonical projection.  Given an $m\in \Z^k$, we may find some curve $\gamma\colon[0,1]\to \R^k$ with $\gamma(0) = 0$ and $\gamma(1) = m$.  Then $p(\gamma)$ corresponds to a closed curve at $[0]$, hence determines an element of $\pi_1(\T^k, [0])$.  For $\eta\in \Sigma$ we have that $\gamma'(t) := \theta_{\gamma(t)} (\eta)$ is the unique lift of $p(\gamma(t))$ to $\sol_A$ starting at $\eta$.  Then we have that $\gamma'(1) = \vartheta_m(\eta).$  This motivates the construction in the next section.

\subsection{A covering space for $(\sol_A, \sigma_A)$} \label{sec:Cover}

Define the  topological group $\overline \sol $ to be the product $ \Sigma \times \R^k$. 
The group action $\vartheta$ of $ \Z^k$ on $\sol_A$ induces a group action  $\vartheta$ of $\Z^k$ on $ \Sigma$. 
We define an embedding $\alpha $ of  $\Z^k$ as a subgroup of $\overline \sol$ by 
\[\alpha(n) :=  \left(\vartheta_{-n}(e),  n \right)\]
where $e$ is the identity element of $\Sigma$.  Then $\alpha$ naturally defines a $\Z^k$ action on $\overline \sol$ by 
\[n\cdot (\xi, v) := (\xi, v) + \alpha(n) = \left(\vartheta_{-n}(\xi), v+ n \right)\]
for $n\in \Z^k, \xi\in \Sigma$, and $v\in \R^k$.

We also define  maps  $\overline\sigma\colon \overline \sol\to \overline \sol$ given by \[\overline \sigma\colon (\xi,v) \mapsto (\sigma_A (\xi), A (v))\] and  $\overline q \colon \overline \sol \to \sol_A$ given by 
\[ \overline q\colon (\xi, v) \mapsto \theta_{v}(\xi). \]
We check that  $\overline\sigma$ is an injective endomorphism.  
Furthermore,  $\overline q$ is seen to be a homomorphism by Claim \ref{commuteGroup}.

We have the following properties of the above construction.

\begin{claim} \label{claim:OVERSOL}$\overline \sol, \overline \sigma, \overline q$, and $\alpha$ satisfy  
\begin{enumerate}[ label=\emph{\alph*)}, ref=\ref{claim:OVERSOL}(\alph*)]
\item\label{PDD} $N:=\alpha(\Z^k)$  is a discrete subgroup isomorphic to $\Z^k$; 
\item\label{KER} $\ker(\overline q) = N$, whence we have the canonical identification  $\overline \sol/N\cong \sol_A$ as topological groups;
\item\label{1LIFT} $\overline q \circ \overline \sigma= \sigma_A\circ \overline q$;
\item\label{STAR} $\overline \sigma (  x+ \alpha (n) ) =  \overline \sigma(x) + \alpha(A(n))$.  
\end{enumerate}
\end{claim}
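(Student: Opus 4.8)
All four assertions follow by unwinding the formulas in Definition \ref{def:action} together with the properties of $\theta$ collected in Claim \ref{clm:Theta}, so the plan is to reduce each part to those. First I would record two preliminary facts. (i) $\Sigma$ is invariant under $\vartheta_n$ for $n \in \Z^k$ and under $\sigma_A$: the first holds because $p_0(\theta_n(\xi)) = p_0(\xi) + n = [0]$ when $\xi \in \Sigma$, by \ref{ThetaProj}, and the second because $p_0 \circ \sigma_A = A \circ p_0$ directly from the definition of $\sigma_A$, and $A[0] = [0]$. In particular $\overline\sigma$ maps $\overline\sol$ into itself and $\alpha(n) \in \overline\sol$. (ii) Applying the identity $\theta_v(\zeta + \eta) = \zeta + \theta_v(\eta)$ of \ref{commuteGroup} with $\eta$ the identity $e$ of $\Sigma$ gives $\vartheta_{-n}(\zeta) = \zeta + \vartheta_{-n}(e)$ for every $\zeta$; combining this with the fact that $\theta$ is an $\R^k$-action yields $\vartheta_{-n-m}(e) = \vartheta_{-n}(e) + \vartheta_{-m}(e)$, so $\alpha$ is a homomorphism, clearly injective since the second coordinate of $\alpha(n)$ is $n$. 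The same identity (ii) shows that the stated action formula $n\cdot(\xi,v) = (\xi,v)+\alpha(n)$ equals $(\vartheta_{-n}(\xi),\,v+n)$, which I will use freely below.

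For part (a): injectivity of $\alpha$ was just noted, so $N \cong \Z^k$. For discreteness, note $\overline\sol = \Sigma \times \R^k$ carries the product topology and the second coordinates of the points of $N$ form the discrete set $\Z^k \subset \R^k$; hence $\Sigma \times B(n,\tfrac12)$ is an open neighborhood of $\alpha(n)$ meeting $N$ only in $\alpha(n)$. For part (b), the inclusion $N \subseteq \ker \overline q$ is the computation $\overline q(\alpha(n)) = \theta_n(\vartheta_{-n}(e)) = \theta_n(\theta_{-n}(e)) = \theta_0(e) = e$. Conversely, if $\overline q(\xi,v) = \theta_v(\xi) = e$, then applying $p_0$ and using \ref{ThetaProj} with $\xi \in \Sigma$ gives $[0] = p_0(e) = [0] + v$, so $v \in \Z^k$; then $\xi = \theta_{-v}(\theta_v(\xi)) = \theta_{-v}(e) = \vartheta_{-v}(e)$, i.e. $(\xi,v) = \alpha(v) \in N$. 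Thus $\ker \overline q = N$.

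It remains to upgrade the induced continuous injective homomorphism $\overline\sol/N \to \sol_A$ to an isomorphism of topological groups. Surjectivity of $\overline q$ follows by lifting: given $\eta \in \sol_A$, choose $v \in \R^k$ with $p_0(\eta) = [v]$; then $p_0(\theta_{-v}(\eta)) = [0]$ by \ref{ThetaProj}, so $\theta_{-v}(\eta) \in \Sigma$ and $\overline q(\theta_{-v}(\eta),\,v) = \eta$. Next, $N$ is closed (being discrete in the Hausdorff group $\overline\sol$), so $\overline\sol/N$ is a Hausdorff topological group, and it is compact because the $\Z^k$-action translates the $\R^k$-coordinate while preserving $\Sigma$, so the compact set $\Sigma \times [0,1]^k$ surjects onto $\overline\sol/N$. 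A continuous bijective homomorphism from a compact group to a Hausdorff group is an isomorphism of topological groups, giving $\overline\sol/N \cong \sol_A$.

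Parts (c) and (d) are diagram chases via the equivariance \ref{ThetaEqui}. For (c): $\overline q(\overline\sigma(\xi,v)) = \overline q(\sigma_A(\xi),\,Av) = \theta_{Av}(\sigma_A(\xi))$, while $\sigma_A(\overline q(\xi,v)) = \sigma_A(\theta_v(\xi)) = \theta_{Av}(\sigma_A(\xi))$ by \ref{ThetaEqui}. For (d): using the action formula, $\overline\sigma((\xi,v)+\alpha(n)) = \overline\sigma(\vartheta_{-n}(\xi),\,v+n) = (\sigma_A(\vartheta_{-n}(\xi)),\,A(v+n))$, whereas $\overline\sigma(\xi,v) + \alpha(An) = (\sigma_A(\xi) + \vartheta_{-An}(e),\,Av+An) = (\vartheta_{-An}(\sigma_A(\xi)),\,Av+An)$ using preliminary (ii) with $\zeta = \sigma_A(\xi)$; the two agree since $A(v+n) = Av + An$ and, by \ref{ThetaEqui} applied with translation vector $-n$ (noting $An \in \Z^k$), $\sigma_A(\vartheta_{-n}(\xi)) = \theta_{-An}(\sigma_A(\xi)) = \vartheta_{-An}(\sigma_A(\xi))$. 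The only step demanding genuine care is part (b): the algebra is immediate, but one must check that the twisted $\Z^k$-action leaves $\Sigma$ invariant and that $\overline\sol/N$ is compact Hausdorff, so that the tautological continuous bijection onto $\sol_A$ is automatically a homeomorphism — without compactness one would instead invoke the open mapping theorem for $\sigma$-compact locally compact groups. Everything else is a direct unwinding of Definition \ref{def:action} and Claim \ref{clm:Theta}.
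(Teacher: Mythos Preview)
Your proof is correct and follows the same approach as the paper: each part is a direct unwinding of Definition~\ref{def:action} together with Claim~\ref{clm:Theta}. The paper's proof is terser---it just writes ``clear'' for (a), computes $\ker\overline q$ for (b), and for (d) only checks $\overline\sigma(\alpha(n))=\alpha(An)$, letting the homomorphism property of $\overline\sigma$ do the rest---whereas you supply additional detail, most notably the surjectivity of $\overline q$ and the compactness argument needed to upgrade the continuous bijection $\overline\sol/N\to\sol_A$ to a topological isomorphism, which the paper asserts but does not spell out.
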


\begin{proof}
\ref{PDD} is clear.
For \ref{KER}, let $\overline q (\xi, v) = e$ where $e$ is the identity in $\Sigma\subset\sol_A$.  Then we have 
\begin{align*}\label{eq:Jujitsu}\theta_{v}(\xi) = e.\end{align*}
In particular, $v \in \Z^k$.  Furthermore \[ \alpha(v)  = (\vartheta_{-v}(e), v)  = (\theta_{v}\inv (e), v)= (\xi, v)\] hence $\overline q (\xi, v) = e $ implies $ (\xi, v)\in N$.  
Similarly for any $n\in \Z^k$ we have $\overline q(\alpha(n) )  = \vartheta_n(\vartheta_{-n}(e))=e$ hence \ref{KER} holds. 

We have
\[\overline q \circ \overline \sigma (\xi, v) = \theta_{Av}(\sigma_A \xi) = \sigma_A(\theta_{v}( \xi))= \sigma_A \circ \overline q (\xi, v)\]
whence  \ref{1LIFT} follows.
Finally we have 
\begin{align*}\overline \sigma(\alpha(n))=  (\sigma_A(\vartheta_{-n} (e)), A(n)) =  (\vartheta_{-A(n)}( \sigma_A(e)), A(n)) = \alpha({A(n)})\end{align*}
from which  \ref{STAR} follows.
\end{proof}

Now $\overline \sigma\colon \overline \sol \to \overline \sol$ is an injective homomorphism, but will fail to be surjective whenever $|\det (A)|>1.$  We define the topological group $\wtd \sol$ as the direct  limit
\[\wtd \sol := \varinjlim \left\{ \overline \sol \xrightarrow{\overline \sigma} \overline \sol \xrightarrow{\overline \sigma} \overline \sol \xrightarrow{\overline \sigma}\dots \right\}\]
and  define $\td \sigma\colon \wtd \sol \to \wtd \sol$ to be the left shift automorphism ($\tau_{\overline \sigma}$ in the notation of Section \ref{sec:dir}) induced by $\overline \sigma \colon \overline \sol \to \overline \sol$ ; that is, 
\[\td \sigma ([(s, l)]) = [(\overline \sigma(s), l)]= [(s, l-1)]\]
where the second  equality holds for $l\ge 1$.  
Furthermore, we define the torsion-free abelian group
\[\Z^k[A\inv]:=\varinjlim \left\{ \Z^k \xrightarrow{A} \Z^k \xrightarrow{A} \Z^k \xrightarrow{A}\dots \right\}\]
and the (left shift) group homomorphism $\tau_A\colon \Z^k[A\inv]\to \Z^k[A\inv]$.  
Note that by Claim \ref{STAR} the diagram 
\[\xymatrix{\Z^k \ar[d]^\alpha \ar[r]^{A}&\Z^k \ar[d]^\alpha \ar[r]^{A}&\Z^k \ar[d]^\alpha \ar[r]^{A}&\Z^k \ar[d]^\alpha \ar[r]^{A}&\dots  \\
\overline \sol\ar[r]^{\overline \sigma}&\overline \sol\ar[r]^{\overline \sigma}&\overline \sol\ar[r]^{\overline \sigma}&\overline \sol\ar[r]^{\overline \sigma}&\dots}\]
commutes whence we may extend the embedding $\alpha\colon \Z^k \to \overline \sol$ to an embedding $\wtd \alpha$ of $\Z^k[A\inv]$ as a subgroup of $\wtd \sol$.  Since $\alpha(\Z^k) $ is a discrete subgroup of $\overline \sol$, the homomorphism $\td \alpha $ embeds $\Z^k[A\inv]$ as a discrete subgroup of $\wtd \sol.$  More explicitly we have 
\[\td \alpha([(n,m)]):= [(\alpha(n), m)]\]
which is seen to be well defined by  Claim \ref{ThetaEqui}. 
As above, the embedding $\td \alpha$ of $\Z^k[A\inv]$ as a subgroup of $\wtd \sol$ defines a natural $\Z^k[A\inv]$ action on $\wtd \sol$.  
We also define a group homomorphism $\td q\colon \wtd \sol \to \sol _A$ by 
\[\td q\colon \left[\big((\xi, v) , l\big)\right] \mapsto \sigma_A^{-l}(\overline q (\xi, v))  
.\]

We enumerate properties of the above constructions.  
\begin{proposition}\label{prop:PropertiesOfSolCover}  For $\wtd \sol, \td \sigma, \td \alpha$, and $\td q$ we have
\begin{enumerate}[ label=\emph{\alph*)}, ref=\ref{prop:PropertiesOfSolCover}(\alph*)]
\item \label{PDDD} $\wtd N:= \alpha(\Z^k[A\inv])$ is a discrete subgroup  isomorphic to $\Z^k[A\inv]$; 
\item \label{2KER} $\ker(\td q) = \wtd N$, whence we have the canonical identification $\wtd \sol/\wtd N\cong \sol_A$ as topological groups;
\item\label{2LIFT} $\td q \circ \td \sigma= \sigma_A\circ \td q$;
\item\label{2STAR} $\td \sigma (  x+ \td \alpha (g) ) =  \td  \sigma(x) + \td \alpha(\tau_A (g))$ for $g\in \Z^k[A\inv]$.   
\end{enumerate}
\end{proposition}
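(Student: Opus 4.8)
The plan is to deduce each of \ref{PDDD}--\ref{2STAR} from the corresponding finite-stage statements in Claim~\ref{claim:OVERSOL} by passing to the direct limit, so the first task is a little topological bookkeeping. I would observe that $\overline\sigma\colon\overline\sol\to\overline\sol$ is an \emph{open topological embedding}: on the $\R^k$-factor it is the homeomorphism $v\mapsto Av$ (recall $\det A\neq0$), while $\sigma_A$ restricts on the compact group $\Sigma$ to an injective continuous endomorphism whose image $\sigma_A(\Sigma)$ has finite index $|\det A|$ and is therefore clopen; hence $\overline\sigma(\overline\sol)=\sigma_A(\Sigma)\times\R^k$ is clopen and $\overline\sigma$ maps $\overline\sol$ homeomorphically onto it. In the concrete model of Section~\ref{sec:dir} this means $\wtd\sol=\bigcup_nX_n$ is an increasing union of clopen copies $X_n\cong\overline\sol$, so $\wtd\sol$ is a locally compact, $\sigma$-compact Hausdorff topological group; the same observation applied to the injection $A\colon\Z^k\to\Z^k$ of discrete groups shows $\Z^k[A\inv]=\bigcup_n(\Z^k)_n$ is discrete. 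Finally I would record that $\overline q$ is a regular $\Z^k$-covering: by \ref{ThetaProj} the assignment $(\xi,v)\mapsto(v,\theta_v(\xi))$ identifies $\overline\sol$ with the pullback of the principal $\Sigma$-bundle $p_0\colon\sol_A\to\T^k$ (\cite[Corollary~2.3]{MR1407604}) along the universal cover $p\colon\R^k\to\T^k$, under which $\overline q$ becomes the pullback of $p_0$ along $p$; in particular $\overline q$ is an open covering map with deck group $N=\alpha(\Z^k)$, consistently with \ref{KER}.

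Next I would treat \ref{2KER}. One checks $\td q$ is well defined on the generating relation, since $\sigma_A^{-(l+1)}\overline q(\overline\sigma(s))=\sigma_A^{-(l+1)}\sigma_A\overline q(s)=\sigma_A^{-l}\overline q(s)$ by \ref{1LIFT}, and that it is a homomorphism because $\overline q$ is (Claim~\ref{commuteGroup}) and each $\sigma_A^{-l}$ is an automorphism of $\sol_A$. It is surjective already at the zeroth stage: given $\eta\in\sol_A$, lift $p_0(\eta)$ to $v\in\R^k$ and set $\xi:=\theta_{-v}(\eta)$, which lies in $\Sigma$ by \ref{ThetaProj}, so that $\overline q(\xi,v)=\eta$. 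Since each $\sigma_A^{-l}$ is bijective, $\td q([(s,l)])=e$ iff $\overline q(s)=e$ iff $s\in N$ by \ref{KER}, and by the formula $\td\alpha([(n,m)])=[(\alpha(n),m)]$ this says precisely $[(s,l)]\in\wtd N$; hence $\ker\td q=\wtd N$. Moreover $\td q$ is open, since up to the identification $X_n\cong\overline\sol$ it agrees with the open map $\overline q$ on each of the open sets $X_n$ (alternatively, apply the open mapping theorem to the continuous surjection $\td q$ of the $\sigma$-compact locally compact group $\wtd\sol$ onto the compact group $\sol_A$), so it induces a topological group isomorphism $\wtd\sol/\wtd N\xrightarrow{\sim}\sol_A$. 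For \ref{PDDD}, the commuting diagram preceding the statement (i.e.\ \ref{STAR}) exhibits $\td\alpha$ as the direct limit of the embeddings $\alpha$, and injectivity is immediate: $\td\alpha([(n,m)])=0$ forces $\alpha(A^jn)=\overline\sigma^j(\alpha(n))=0$ for some $j$, so $A^jn=0$ by \ref{PDD} and $n=0$ since $A$ is invertible; thus $\wtd N\cong\Z^k[A\inv]$, and $\wtd N=\ker\td q$ is discrete because $\td q$ is a local homeomorphism and the $X_n$ are open.

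The remaining two identities are short computations. For \ref{2LIFT}, $\td q(\td\sigma([(s,l)]))=\sigma_A^{-l}\overline q(\overline\sigma(s))=\sigma_A^{-l}\sigma_A\overline q(s)=\sigma_A(\td q([(s,l)]))$ by \ref{1LIFT}. For \ref{2STAR}, represent $x=[(s,m)]$ and $g=[(n,m)]$ at a common stage $m$ (raising the stage sends $s\mapsto\overline\sigma(s)$ and $n\mapsto An$, which is harmless); then, using that $\overline\sigma$ is a homomorphism together with \ref{STAR} and the identity $\tau_A([(n,m)])=[(An,m)]$ from Section~\ref{sec:dir},
\[\td\sigma(x+\td\alpha(g))=[(\overline\sigma(s)+\alpha(An),m)]=\td\sigma(x)+\td\alpha([(An,m)])=\td\sigma(x)+\td\alpha(\tau_A(g)).\]

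The algebraic content — well-definedness, the kernel computation, and \ref{2LIFT}--\ref{2STAR} — is formal diagram chasing once the structures of Claim~\ref{claim:OVERSOL} and Section~\ref{sec:dir} are in place. The step I expect to require the most care is the topological groundwork of the first paragraph: verifying that $\overline\sigma$ (and $A$ on $\Z^k$) are open embeddings, so that the direct limits are locally compact groups exhausted by clopen copies of $\overline\sol$, and hence that $\td\alpha$ remains a discrete embedding and $\td q$ remains open rather than being merely a continuous surjection on each finite stage.
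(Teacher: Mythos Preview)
Your proof is correct and follows essentially the same approach as the paper: each of \ref{PDDD}--\ref{2STAR} is deduced from the corresponding item of Claim~\ref{claim:OVERSOL} by passing to the direct limit, with the kernel computation for \ref{2KER} and the short calculations for \ref{2LIFT}, \ref{2STAR} matching the paper's almost verbatim. You supply considerably more topological care than the paper does---verifying that $\overline\sigma$ is an open embedding so that the $X_n$ are clopen, that $\td q$ is surjective and open (hence the identification $\wtd\sol/\wtd N\cong\sol_A$ is genuinely topological and not merely algebraic), and that $\wtd N$ is discrete via the local-homeomorphism property of $\td q$---all of which the paper either declares ``clear'' or leaves implicit.
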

\begin{proof}
\ref{PDDD} is clear.  
To see \ref{2KER},  let $\td q\Big( \big[\big((\xi, v), l\big) \big]\Big)= e$. 
Then we have 
\[\sigma_A^{-l}(\theta_v( \xi))=e.\]
Applying $\sigma_A^{l}$  of both sides we have 
$\theta_{v}( \xi)=\sigma_A^{l}(e)= e$ 
whence $v\in  \Z^k$.  Taking  $g = [(v,l)]$ we have 
\begin{align*}
\td\alpha(g) &=[ (\alpha(v), l)] =\big [ \big( (\vartheta_{-v}(e) , v), l\big)\big] =\big [ \big( (\xi , v), l\big)\big]
\end{align*}
hence $ \big[\big((\xi, v), l\big) \big] = \td \alpha(g)$.  Similarly one verifies that $ \td q(\td \alpha(g)) = e$ for any $g\in \Z^k[A\inv]$.  Hence \ref{2KER} holds.

To see \ref{2LIFT}, note that for any $\overline s \in \overline \sol$ and $l\in \N$ we have
\begin{align*}
\td q \circ \td \sigma( [(\overline s, l)]) = \sigma_A^{-l}(\overline q (\overline\sigma(\overline s)))
= \sigma_A^{-l}(\sigma_A (\overline q(\overline s)))= \sigma_A (\sigma_A^{-l}( \overline q(\overline s)))= \sigma_A (\td q([(\overline s,l)])).
\end{align*}
Finally for $g = [(n,m)]$ we see
\begin{align*} \td \sigma (   \td \alpha (g) ) =  [(\overline \sigma(\alpha(n)),m)]=   [(\alpha(A(n)),m)] =   \td \alpha(\tau_A (g))\end{align*} 
establishing \ref{2STAR}.  
\end{proof}
We thus have that $\td q\colon \wtd \sol\to \sol_A$ is a covering map and that $\td \sigma$ lifts $\sigma_A$.

\subsection{Metrization of $\sol_A$.}\label{sec:MetrizeSol}
We conclude this section with the construction of  a canonical metric on $\sol_A$ with respect to which  $\sol_A$ behaves (metrically) like a hyperbolic set for $\sigma_A$ when $A$ is hyperbolic.  

Firstly, let $\rho$ denote the standard metric on $\R^k$.  Given a curve $\gamma \colon [0,1]\to \sol_A$, there is a unique curve $\gamma'\colon [0,1]\to \R^n$ with $\gamma'(0) = 0$ such that  $\gamma(t) = \theta_{\gamma'(t)}( \gamma(0))$.  If  $x,y\in \sol_A$ lie in the same path component, define $\Gamma(x,y)$ to be the set of all curves $\gamma\colon[0,1]\to \sol_A$ with $\gamma(0)=x$ and $\gamma(1)=y$.  Then 
define \[\rho(x,y):= \inf_{\gamma\in \Gamma(x,y)}\rho\big(\gamma'(0), \gamma'(1)\big).\]

Secondly, for any $x,y \in \sol_A$ we define \[ \mathcal J(x,y) :=\{j\in \Z \mid p_0(\sigma_A^j (x)) \neq p_0(\sigma_A^j (y))\}\] and  
\[d_\Sigma(x,y):= \sum_{j\in \mathcal J(x,y)} 2^j.\]
Note that for any $x,y\in \sol_A $ and $v\in \R^k$ we have $d_\Sigma(\theta_v (x) , \theta_v(y)) = d_\Sigma(x,y)$ and  $d_\Sigma(\sigma_A (x) , \sigma_A(y)) = \frac{1}{2} d_\Sigma(x,y)$.

Given $x,y \in \sol_A$, let $\Xi(x,y)$ be the set of all sequences $\xi=(x_0, y_0, x_1, y_1, \dots, x_l, y_l)$ such that  there is a curve $\gamma_j\subset \sol_A $ with endpoints $x_j$ and $y_j$ for $0\le j\le l$.  
Then define \[l(\xi) := \sum_{0\le j\le l} \rho(x_j, y_j) +  \sum_{0\le j\le l-1} d_\Sigma(y_j, x_{j+1} )\] 
and \[d (x,y):= \inf _{\xi \in\Xi(x,y)} \{l(\xi)\}.\]

Now, denoting \begin{align*}\wtd \Sigma :&= \varinjlim \{\Sigma \xrightarrow{\sigma_A} \Sigma \xrightarrow{\sigma_A} \Sigma \xrightarrow{\sigma_A} \dots\} \\& = \{(x_0, x_1, \dots ) \in \sol_A\mid A^j(x_0) = [0] \mathrm {\ for \ some \ } j \in \N\}\end{align*}
we have $\wtd \sol \cong \wtd \Sigma \times \R^k$.  
Hence if $A\in \Mat(k, \Z)$ is a hyperbolic matrix, $E^+$ and $E^-$ denote the expanding and contracting subspaces,
then $ \wtd \sol \cong \wtd \Sigma\times E^+ \times E^- $.  Furthermore, if $\td d$ is the lift of the metric $d$ to $\wtd \sol$ then $\td d$ is equivalent to the product metric $\restrict {d_\Sigma} {\wtd \Sigma}\times \restrict{\rho} {E^+}\times \restrict{ \rho} {E^-}$, hence with respect to $\td d$ \[\td \sigma \colon E^+ \to E^+ \] is expanding 
and   \[\td \sigma \colon \wtd \Sigma \times  E^-\to  \wtd \Sigma\times E^-  \] is contracting.

\section{Proof of Theorem \ref{thm:main}}\label{sec:main}
Since we are only concerned with the topology of $\Lambda$ and the dynamics $\restrict f \Lambda$ in Theorem \ref{thm:main},  by Claim \ref{rem:Intertible} we assume without loss of generality that $f\colon M \to M$ is a diffeomorphism.  
To prove Theorem \ref{thm:main} we first present some preliminary observations and constructions that will enable us to build the essential dichotomy.  

\subsection{Preliminaries} \label{sec:Prelim}
Let $\Lambda$ be a compact, topologically mixing, hyperbolic attractor for a diffeomorphism $f\colon M\to M$ such that $\restrict {\dim E^u}{\Lambda} = 1$.  Let $B$ denote the basin of $\Lambda$, $\wtd B$ the universal cover of $B$, $\pi\colon \wtd B\to B$  the covering projection, and $\wtd \Lambda:= \pi\inv (\Lambda)$.  Let $G:=\pi_1(B)$ denote the fundamental group of $B$, which we identify with the group  of deck transformations for the covers $\pi\colon \wtd B\to B$ and $\restrict{\pi}{\wtd \Lambda}\colon \wtd \Lambda \to \Lambda$.  Given subsets $H\subset G$ and $X\subset B$ denote by $\orbit_H(X) := \bigcup_{g\in H} g(X)$ the \emph{orbit} of $X$ under $H$.  

Let $g$ be a Riemannian metric on $M$.  Note that for any lift $\td f$ of $f$, $\wtd \Lambda$ is a hyperbolic set under the pull-back metric $\pi^*(g)$.  For $x\in \wtd \Lambda$ and $\sigma\in \{s,u\}$ denote by $W^\sigma( x)$ the $\sigma$-man\-ifold of $x$ under the dynamics $\td f$ in the metric $\pi^*(g)$.  Note also that $W^\sigma( x)$ is the connected component of $\pi\inv\big(W^\sigma( \pi(x))\big)$ containing $x.$  In addition note that for $g\in G$ different from the identity,  $W^\sigma ({g(x)}) \cap W^\sigma( x )= \emptyset$. Given a subset $X\subset \wtd \Lambda$ we will write $W^\sigma (X):= \bigcup_{x\in X} W^\sigma(x)$.

\begin{definition}\label{defn:stabMetric}
Let $x\in \wtd B$.  Define $d^s_x$ to be  the distance function on $\stab x$ induced by restricting the metric ${\pi ^* (g)}$ to ${\stab x}$.  For simplicity we shall suppress the dependence on $x$ and simply write $d^s(x,y):=d_x^s(x,y)$ whenever $y\in \stab x$.  
\end{definition}

Note that $\wtd B$ admits a co\-dimen\-sion-1 foliation $\Fols$ by the stable manifolds of $\wtd \Lambda$.  Since $\pi_1(\wtd B)= \{1\}$, the foliation  $\Fols$ is transversely orientable.  (Indeed,   one can always  make $\Fols$ and  $\wtd B$ orientable by passing to double covers.)  
Fix a transverse orientation for $\Fols$.  Note that neither $G$ nor any lift of $f$ is assumed to preserve this transverse orientation.

Given a compact, oriented, $C^1$ curve $\gamma\subset \wtd B$  that is everywhere transverse to the foliation $\Fols$, we define a signed length $l^u(\gamma)$.  Let $\{V_i\} $ be a cover of $\Lambda$ by local $s$-product neighborhoods (see Definition \ref{def:LPC}) and let $\{\wtd V_{ij}\} = \pi \inv (\{V_i\})$  where for each $j$, the set $\wtd V_{ij}$ is homeomorphic to $V_i$.  Let $\td f$ be a lift of $f$.  Then we may find some $n>0$ so that $\gamma \subset \td f^{-n} (\bigcup_{ij} \wtd V_{ij})$.

\begin{definition}\label{defn:unstLength}
Given $\gamma$ and $n$ as above, we define the \emph{signed unstable length} $l^u(\gamma)$ as follows. We first define $l^u$ on a connected component $\gamma'$ of $\left(\gamma \cap \td f^{-n}(\wtd V_{ij})\right)$.  Let \[C = \pi \left (\td f^{n}(\gamma')\right)\] and define 
\[l^u \left(\gamma'\right) = \mathrm{sgn}(\gamma)\, e^{-nh} \, \mu^u\left( \left\{y\in W^u_{V_{i}}(z)\mid W^s_{V_{i}}(y ) \cap C
\neq \emptyset \right\}\right)\] where $z $ is any point in $V_i \cap \Lambda$,   
$\mu^u$ is as in Theorem \ref{thm:RSS}, 
\[\mathrm{sgn}(\gamma) = \begin{cases}  1, & \text{if }\gamma \text{ is positively orientated with respect to } \Fols,\\-1, & \text{if } \gamma \text{ is negatively orientated with respect to } \Fols, \end{cases}\]
and $h$ is the topological entropy of $\restrict f \Lambda$.
We may then extend $l^u$  to all of $\gamma$ additively.
\end{definition}
Theorem \ref{thm:RSS} shows that $l^u$ is well defined, independent of all choices above.  
Furthermore, given any piecewise-smooth oriented curve $\gamma\subset \wtd B$ we may partition $\gamma$ into a family of curves $\{\gamma_i\}$, each of which is everywhere tangent to $\Fols$ or everywhere transverse to $\Fols$; in the former case  we define $l^u(\gamma_i)=0$ while in the latter we use Definition \ref{defn:unstLength}.  Thus we may extend the definition of $l^u$ to all piecewise-smooth oriented curve in $\wtd B$.  Note that by Corollary \ref{cor:toRSS},   $l^u(\gamma) $ is non-zero on any curve $\gamma$ transverse to $\Fols$ and $l^u(\{x\}) = 0$. 

Now piecewise-smooth curves generate the group of piecewise-smooth simplicial 1-chains.  Thus we may extend the function $l^u$ to a piecewise-smooth simplicial 1-cochain denoted by $\alpha^u$. 

\begin{claim}\label{prop:exact}
The cochain $\alpha^u$ is closed, hence exact.
\end{claim}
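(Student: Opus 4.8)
The plan is to show $\alpha^u$ is closed, from which exactness follows immediately since $\wtd B$ is simply connected (so $H^1(\wtd B) = 0$). Being closed for a simplicial $1$-cochain means that $\alpha^u$ evaluates to zero on the boundary of every piecewise-smooth $2$-simplex, i.e., $l^u(\partial \Delta) = 0$ for every singular $2$-simplex $\Delta$; equivalently, $l^u(\gamma) = 0$ for every piecewise-smooth closed loop $\gamma$ that bounds a disk. Since any such loop bounds in $\wtd B$, it suffices to prove that $l^u$ vanishes on null-homotopic loops, and in fact it is enough to check it on the boundary of a small embedded $2$-disk, after which the general case follows by subdividing the disk and using additivity of $l^u$.

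First I would reduce to a local computation. Cover $\Lambda$ by finitely many local $s$-product neighborhoods $V_i$, pull back to get the $\wtd V_{ij}$, and use the dynamics to push any given $2$-simplex $\Delta$ forward by $\td f^n$ so that $\td f^n(\Delta)$ lies inside a single $\wtd V_{ij}$; since $l^u$ scales by $e^{-nh}$ under $\td f$ (this is built into Definition \ref{defn:unstLength} via the factor $e^{-nh}$ and the equivariance in Theorem \ref{thm:RSS}(c)), it suffices to verify $l^u(\partial\Delta) = 0$ when $\Delta \subset \wtd V_{ij}$. Inside such a product neighborhood, $\wtd V_{ij}$ is homeomorphic to $V_i$, which in turn carries a product structure: it fibers over an interval's worth of local unstable plaques, the fibers being (pieces of) local stable manifolds. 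The signed unstable length $l^u$ of a transverse curve $C$ measures exactly the $\mu^u$-mass of the set of unstable-plaque coordinates that $C$ crosses, with sign given by the transverse orientation.

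The key point is then that this local measurement is, up to sign, a genuine ``$\mu^u$-transverse-coordinate'' function: working in the product chart $V_i \cong (\text{unstable interval}) \times (\text{stable fiber})$, projecting a curve to the unstable-interval factor and integrating $d\mu^u$ against the signed number of crossings is the pullback, via projection, of the signed measure $\mu^u$ on the unstable interval. A signed measure on a $1$-manifold is a closed $1$-current, so its pullback under the (continuous, transverse) projection is closed; concretely, for the boundary of a $2$-simplex $\Delta$ contained in the chart, the two ``ends'' of $\Delta$ in the unstable direction contribute cancelling terms, because $\partial \Delta$ is a cycle and the unstable-coordinate projection of a cycle is a cycle, on which the signed measure evaluates to zero. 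Care is needed because $\mu^u$ is supported on the Cantor-like transversal $\locUnst x \cap \Lambda$ rather than on a full interval, but Definition \ref{defn:unstLength} already accounts for this by measuring $\mu^u(\{y \in W^u_{V_i}(z) : W^s_{V_i}(y) \cap C \neq \emptyset\})$, and the invariance of $\mu^u$ under canonical isomorphism (Theorem \ref{thm:RSS}(b)) guarantees this is independent of which stable fiber we slice along, which is precisely what makes the crossing count well-defined and additive over the faces of $\Delta$.

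The main obstacle I expect is the bookkeeping with the transverse orientation and the non-manifold nature of $\Lambda$: one must check that when a transverse curve folds back (so it crosses the same stable leaf multiple times with alternating orientation), the signed contributions telescope correctly, and that subdividing a $2$-simplex into pieces each lying in a single chart does not introduce spurious boundary terms. This is handled by the additivity of $l^u$ over concatenation of curves (so interior edges of the subdivision are traversed twice with opposite orientation and cancel) together with the well-definedness asserted right after Definition \ref{defn:unstLength}. Once closedness is established, exactness is automatic: a closed $1$-cochain on the simply connected space $\wtd B$ is a coboundary, so there is a piecewise-smooth function on $\wtd B$ (unique up to a constant) whose coboundary is $\alpha^u$; this primitive is the object that will be used in the sequel to build the semiconjugacy to the algebraic model.
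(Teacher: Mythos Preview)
Your argument is correct and is essentially an expansion of the paper's one-line proof, which reads in full: ``A 1-cochain is closed if it is locally independent of path.  This is clear by Theorem~\ref{thm:RSS}\ref{RSS:CI}.''  The invariance of $\mu^u$ under canonical isomorphism is precisely what makes $l^u$, inside a single product chart, depend only on which stable leaves the endpoints of a transverse curve lie on; that is local path-independence, hence closedness, and exactness follows from the simple connectivity of $\wtd B$.  Your product-chart description, the subdivision with cancellation of interior edges, and the observation that the projection to the unstable coordinate turns $l^u$ into integration of a closed $1$-current are exactly the content behind that one sentence.

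One small remark on the reduction step: the phrase ``push any given $2$-simplex $\Delta$ forward by $\td f^n$ so that $\td f^n(\Delta)$ lies inside a single $\wtd V_{ij}$'' only works once you have already subdivided into small simplices (each lying in some $\td f^{-n}(\wtd V_{ij})$), since forward iteration expands in the unstable direction and will never shrink an arbitrary simplex into a chart.  You do say this earlier, so the logic is fine; just make the order of operations explicit.  Also note that $l^u(\td f(\gamma)) = e^{h}\, l^u(\gamma)$ rather than $e^{-h}$: the factor $e^{-nh}$ in Definition~\ref{defn:unstLength} is there to \emph{undo} the expansion incurred by applying $\td f^n$ before measuring with $\mu^u$, which is why the definition is independent of $n$.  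This sign does not affect your argument.
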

\begin{proof}
A 1-cochain is closed if it is locally independent of path.  This is clear  by Theorem \ref{thm:RSS}\ref{RSS:CI}.
\end{proof}
Claim \ref{prop:exact} has the following two corollaries.  
\begin{corollary}\label{cor:sameULength}
Given $x, y \in \wtd B$ and two oriented piecewise-smooth curves $\gamma_1, \gamma_2$ with end points $x$ and $y$ then $| l^u(\gamma_1) | = | l^u(\gamma_2) |.$
\end{corollary}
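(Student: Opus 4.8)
The plan is to derive Corollary \ref{cor:sameULength} directly from the exactness of $\alpha^u$ established in Claim \ref{prop:exact}, exploiting the fact that $\wtd B$ is simply connected. Since $\alpha^u$ is a closed $1$-cochain on the simply connected space $\wtd B$, it is exact, so there is a $0$-cochain (a function) $\beta^u\colon \wtd B \to \R$ with $\delta \beta^u = \alpha^u$; equivalently, for any oriented piecewise-smooth curve $\gamma$ from $x$ to $y$ we have $l^u(\gamma) = \beta^u(y) - \beta^u(x)$. Concretely one may \emph{define} $\beta^u(y) := l^u(\gamma_{x_0,y})$ for a fixed basepoint $x_0$ and any curve from $x_0$ to $y$; the value is independent of the curve because any two such curves differ by a loop, which bounds a $2$-chain (by simple connectivity, after subdivision into piecewise-smooth simplices), and $\alpha^u$ evaluates to $0$ on a boundary since it is closed. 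This is the substance of the corollary.

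From here the argument is immediate: given $\gamma_1, \gamma_2$ both running from $x$ to $y$, each equals $\beta^u(y) - \beta^u(x)$, so in fact $l^u(\gamma_1) = l^u(\gamma_2)$ as signed quantities, and a fortiori $|l^u(\gamma_1)| = |l^u(\gamma_2)|$. The only subtlety worth a sentence is orientation bookkeeping: the curves are assumed oriented with matching endpoints (both from $x$ to $y$), and $l^u$ is additive under concatenation and changes sign under reversal of orientation (which follows from Definition \ref{defn:unstLength}, since reversing a transverse curve flips $\mathrm{sgn}(\gamma)$); thus $\gamma_1$ followed by the reverse of $\gamma_2$ is a closed curve with $l^u$ equal to $l^u(\gamma_1) - l^u(\gamma_2)$, and this vanishes by closedness of $\alpha^u$ together with simple connectivity of $\wtd B$.

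I expect the main obstacle to be purely expository rather than mathematical: one must be careful that the passage from ``closed'' to ``exact'' genuinely uses $\pi_1(\wtd B) = \{1\}$ (closed $1$-cochains on a non-simply-connected space need not be exact), and that the simplicial-cochain formalism is compatible with the piecewise-smooth curves appearing in the statement --- i.e.\ that every piecewise-smooth loop can be subdivided so as to bound a piecewise-smooth $2$-chain on which $\alpha^u$ is defined. Since the paper has already set up $\alpha^u$ as a piecewise-smooth simplicial $1$-cochain and invoked the standard fact that closed implies exact on a simply connected space, the proof can be stated in a few lines: introduce the primitive $\beta^u$, observe $l^u(\gamma_i) = \beta^u(y) - \beta^u(x)$ for $i = 1,2$, and conclude. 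The absolute values in the statement are a harmless weakening that accommodates the later applications where the transverse orientation of $\Fols$ has not been pinned down globally.
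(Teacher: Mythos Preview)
Your proposal is correct and follows essentially the same approach as the paper: both use that $\alpha^u$ is exact on the simply connected $\wtd B$, so it vanishes on the closed chain obtained by concatenating $\gamma_1$ with (a suitably reoriented) $\gamma_2$. The paper's proof is simply the two-line version of your second paragraph, and one small remark: the absolute values in the statement are there not because the transverse orientation of $\Fols$ is undetermined (it has been fixed), but because ``end points $x$ and $y$'' does not specify which curve runs which way, so one may only conclude $l^u(\gamma_2)=\pm l^u(\gamma_1)$.
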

\begin{proof}Changing orientation if necessary we may assume that the concatenation  $\gamma_1\cdot \gamma_2$ is a closed 1-chain.  But then we have \[0=\alpha ^u(\gamma_1\cdot \gamma_2)= l^u(\gamma_1) + l^u(\gamma_2)\] hence $l^u(\gamma_2) = - l^u(\gamma_1)$.
\end{proof}

\begin{corollary}\label{cor:intonepoint}
For each pair $x, y \in \wtd \Lambda$, the intersection $\stab x \cap \unst y$ contains at most one point. 
\end{corollary}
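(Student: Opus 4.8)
The plan is to argue by contradiction using the signed unstable length $l^u$ and the exactness of $\alpha^u$. Suppose $z_1, z_2$ are two distinct points in $\stab x \cap \unst y$ for some $x, y \in \wtd \Lambda$. Since both $z_1$ and $z_2$ lie on the connected stable manifold $\stab x$, and also both lie on the connected unstable manifold $\unst y$, we can form two piecewise-smooth curves from $z_1$ to $z_2$: one, call it $\gamma^s$, running inside the leaf $\stab x$, and another, $\gamma^u$, running inside the leaf $\unst y$. Because $\gamma^s$ is everywhere tangent to the stable foliation $\Fols$, Definition \ref{defn:unstLength} gives $l^u(\gamma^s) = 0$. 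On the other hand, $\gamma^u$ lies in an unstable manifold, which (since $\restrict{\dim E^u}\Lambda = 1$) is everywhere transverse to $\Fols$; so by the remark following Definition \ref{defn:unstLength} (which invokes Corollary \ref{cor:toRSS}), $l^u(\gamma^u) \neq 0$, since $\gamma^u$ is a nondegenerate curve transverse to the foliation.

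Now apply Corollary \ref{cor:sameULength} to the two curves $\gamma^s$ and $\gamma^u$, both of which have endpoints $z_1$ and $z_2$: we obtain $|l^u(\gamma^s)| = |l^u(\gamma^u)|$, i.e. $0 = |l^u(\gamma^u)| \neq 0$, a contradiction. Hence $\stab x \cap \unst y$ contains at most one point.

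The one point that needs a little care — and which I expect to be the main (though minor) obstacle — is verifying that the curve $\gamma^u$ running inside $\unst y$ from $z_1$ to $z_2$ really is, up to subdividing into finitely many pieces, everywhere transverse to $\Fols$ (or at worst tangent on a degenerate subset), so that $l^u$ is defined on it and is genuinely nonzero. Since $\dim E^u = 1$, the unstable manifold is a $1$-dimensional immersed submanifold whose tangent line at each point is $E^u$, and $E^u \cap E^s = 0$, so $\unst y$ is transverse to every stable leaf it meets; thus any nondegenerate arc of $\unst y$ is transverse to $\Fols$. One should also note that $z_1 \neq z_2$ forces $\gamma^u$ to be a genuine arc (not a single point), and that an arc of $\unst y$ is compact in the internal topology of $\unst y$, so Corollary \ref{cor:toRSS} applies to guarantee the relevant $\mu^u$-measure is positive and finite, hence $l^u(\gamma^u) \neq 0$. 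With transversality and nondegeneracy in hand, the contradiction closes the argument.
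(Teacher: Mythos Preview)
Your proof is correct and is essentially the same as the paper's: the paper concatenates your $\gamma^s$ and $\gamma^u$ into a single $1$-cycle $\gamma$ and observes that $|\alpha^u(\gamma)|>0$ contradicts exactness of $\alpha^u$, whereas you invoke the equivalent Corollary~\ref{cor:sameULength} on the two paths separately. Your extra paragraph verifying transversality of $\gamma^u$ to $\Fols$ is sound and makes explicit what the paper leaves implicit.
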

\begin{proof}
If not we could find a piecewise smooth 1-cycle $\gamma$ with $|\alpha^u(\gamma)|>0$, a contradiction since $\alpha ^u$ is exact.  
\end{proof}
The above corollaries motivate the following definitions.
\begin{definition}
We say a subset $V\subset \wtd \Lambda$ is a \emph{product chart} if $x, y\in V$ implies $\unst x\cap \stab y$ is non-empty and $\unst x\cap \stab y\subset V$.  
\end{definition}
\begin{definition}
For $x\in \wtd \Lambda$ and $x'\in \unst x$ let $l^u(x,x') := l^u(\gamma_{x x'})$ where $\gamma_{x x'}$ is the unique oriented curve in $\unst x$ from $x$ to $x'$.
For $x\in \wtd \Lambda$ and $L\in \R$,  let $x+_u L$ denote the unique point $x'\in \unst x $ with $l^u(x,x') = L$.
\end{definition}
\begin{definition}\label{defn:unstMetric}
Given $x,y \in \wtd B$ we define the pseudometric $d^u (x, y):= |l^u(\gamma)|$ for any piecewise smooth curve $\gamma$ with endpoints $x$ and $y$.  Furthermore we define a metric on leaves of  the foliation  $\Fols$ by $d^u(\stab x, \stab y) := d^u(x,y)$.  
\end{definition}
Note that Corollary \ref{cor:sameULength} guarantees $d^u$ is well defined, and Corollaries  \ref{cor:toRSS} and \ref{cor:toRSS2} guarantees that  the restriction of $d^u$ to $\unst x$ defines a complete metric consistent with the topology on $\unst x$.

\begin{definition}\label{defn:MetOnUC}
For $x,y \in \wtd \Lambda$ we define $\Xi(x,y)$ to be the set of sequences $\xi =(x= x_0, y_0, \dots, x_k, y_k = y)$ where $y_j\in \unst {x_j}$ for $0\le j\le k$ and $x_{j+1} \in \stab {y_j}$ for $0\le j\le k-1$.  Then define  
\[d(x,y) := \inf _{\xi \in \Xi(x,y)} \left\{ \sum _{j = 0}^k d^u(x_j, y_j)+ \sum _{j= 0}^{k-1} d^s(x_{j+1}, y_j)\right\} \]
where $d^s$ is the  distance induced by the Riemannian metric in Definition \ref{defn:stabMetric} and $d^u$ is the pseudometric constructed from the measure $\mu^u$ in Definition \ref{defn:unstMetric}.
Clearly $d$ defines a metric on $\wtd \Lambda$ consistent with the ambient topology. 
\end{definition}

\subsubsection{Global product relation} We now define a binary relation on points in $\wtd \Lambda$.
\begin{definition}[Global Product Relation]\label{def:EQrel}
For $x,y\in \wtd\Lambda$ we say $x \sim y$ if $y\in \stab x$ and \[\unst y \cap \stab{ x'} \neq \emptyset\] for all $x'\in \unst x$.  
\end{definition}
\begin{claim}
$\sim$ is an equivalence relation.  
\end{claim}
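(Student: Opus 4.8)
The plan is to verify the three defining properties of an equivalence relation: reflexivity, symmetry, and transitivity. Reflexivity is immediate: $x \in \stab x$ trivially, and for every $x' \in \unst x$ the point $x'$ itself lies in $\unst x \cap \stab{x'}$, so $x \sim x$.

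For symmetry, suppose $x \sim y$. First observe that $x \in \stab y$: this holds because $y \in \stab x$ forces $\stab x = \stab y$ (stable manifolds of the diffeomorphism coincide or are disjoint), so membership is symmetric here. The real content is to show $\unst x \cap \stab{y'} \neq \emptyset$ for every $y' \in \unst y$. Given such a $y'$, since $x \sim y$ there is (taking $x' = x$) a point $z \in \unst y \cap \stab x$; I would want to argue that we can locate the desired intersection point by a "closing up the quadrilateral" argument: the point we seek is obtained by following $\stab{y'}$ until it meets $\unst x$. The cleanest route is to use Corollary \ref{cor:intonepoint} together with the cochain $\alpha^u$: since $y \sim x$, for each $x' \in \unst x$ there is a canonical point, and one builds a piecewise-smooth curve from $y'$ into $\unst x$ through stable and unstable legs whose unstable length $l^u$ is forced (by exactness of $\alpha^u$, Claim \ref{prop:exact}, and Corollary \ref{cor:sameULength}) to equal the prescribed value $l^u(x, \text{(foot of } y' \text{ on } \unst y \text{ pushed across)})$; completeness of $d^u$ on $\unst x$ (the remark after Definition \ref{defn:unstMetric}, invoking Corollaries \ref{cor:toRSS} and \ref{cor:toRSS2}) then produces the actual intersection point $x + _u L$ for the appropriate $L$, and one checks it lies in $\stab{y'}$.

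For transitivity, suppose $x \sim y$ and $y \sim z$. Then $y \in \stab x$ and $z \in \stab y$, so $z \in \stab x$ since stable manifolds are transitive ($\stab x = \stab y = \stab z$). It remains to show that for every $x' \in \unst x$ we have $\unst z \cap \stab{x'} \neq \emptyset$. From $x \sim y$, pick $w \in \unst y \cap \stab{x'}$; from $y \sim z$ applied to this $w \in \unst y$, pick $v \in \unst z \cap \stab{w}$. Then $v \in \unst z$, and $v \in \stab w = \stab{x'}$ (again stable manifolds through $w$ and $x'$ coincide, since $w \in \stab{x'}$), so $v$ is the desired point and $x \sim z$.

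\textbf{Main obstacle.} The symmetry step is the delicate one: reflexivity and transitivity are essentially formal once one knows stable manifolds partition $\wtd\Lambda$ and uses $\unst x \cap \stab y$ is a single point (Corollary \ref{cor:intonepoint}), but symmetry genuinely requires producing a new intersection point — for $y' \in \unst y$ one must find the matching point on $\unst x$ — and this is exactly where completeness of the metric $d^u$ on $\unst x$ and exactness of $\alpha^u$ do the work, guaranteeing that the "length" one is forced to travel along $\unst x$ is finite and hence realized by an actual point. I would isolate this as the crux and phrase it in terms of the $+_u$ notation: the point symmetric to $w = \unst x \cap \stab y$ pushed forward is $x' +_u (-l^u(y,w'))$ or similar, with the sign and value pinned down by Corollary \ref{cor:sameULength}.
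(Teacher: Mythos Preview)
Your proposal is correct and, for symmetry, follows the same route as the paper: given $y'\in\unst y$, set $L=l^u(y,y')$ and $x'=x+_uL$; since $x\sim y$ the point $z=\unst y\cap\stab{x'}$ exists, and exactness of $\alpha^u$ (Corollary~\ref{cor:sameULength}) forces $l^u(y,z)=L$, hence $z=y'$ and $x'\in\unst x\cap\stab{y'}$. Your write-up circles around this a bit (the sentence ``taking $x'=x$ there is a point $z\in\unst y\cap\stab x$'' just recovers $z=y$, and ``$w=\unst x\cap\stab y$'' is again just $x$), but the ``Main obstacle'' paragraph correctly identifies the $+_u$ mechanism as the crux, which is exactly what the paper uses.

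Your transitivity argument is in fact \emph{simpler} than what the paper indicates. The paper says ``a similar argument shows that $\sim$ is transitive,'' pointing to another use of the $l^u$ length matching; you instead argue directly: from $x\sim y$ get $w\in\unst y\cap\stab{x'}$, then from $y\sim z$ applied to $w\in\unst y$ get $v\in\unst z\cap\stab w=\unst z\cap\stab{x'}$. This needs only that stable manifolds partition $\wtd\Lambda$, not the cochain machinery, so it is a genuine (if small) streamlining.
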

\begin{proof}
Clearly $\sim$ is reflexive.  To see that $\sim$ is symmetric suppose $x\sim y$, and that there exists some $y' \in \unst y$ such that $\stab {y'} \cap \unst x = \emptyset$.  Set $L = l^u(y, y')$ and $x' = x +_u L$.  Then $$l^u(y, \unst y \cap \stab{x'}) = L$$ 
hence $y' = \unst y \cap \stab{x'}$ contradicting the assumptions on $y'$. 
Thus $\sim$ is symmetric.  A similar argument shows that $\sim$ is transitive.  
\end{proof}
We let $[x]$ denote the equivalence class of $x$ under the relation $\sim$.  
\begin{remark}
The equivalence class $[x]$ represents the maximal subset of $\wtd \Lambda \cap \stab x$ with global product structure, that is, 
admitting the canonical homeomorphism 
\[  [x] \times \unst x\cong  \unst {[x]} \]
given by \[(y, x')\mapsto \unst y \cap \stab{x'}.\]

Furthermore we have that $W^u$ saturation of $[x]$ is $\sim$-saturated whence we have the equality 
$$\unst{[x]} = [\unst{x}]$$
and $\unst{[x]}$, with the quotient topology,  is homeomorphic to $\R$.
\end{remark}
We enumerate a number of properties of the equivalence classes of $\sim$.
\begin{claim}\label{prop:classProperties} \  
\begin{enumerate}[ label=\emph{\alph*)},  ref=\ref{prop:classProperties}(\alph*)]
	\item \label{class4} The equivalence classes are preserved under $u$-holonomy; in particular, the $\R$-action  $x\mapsto x+_u L$ on $\wtd \Lambda$ descends to a well defined $\R$ action $[x] \mapsto [x +_u L ] = [x] +_u L$. 
	\item \label{class5} Equivalence classes are preserved by the covering action of $G$ and by any lift $\td f$ of $f$.
	\item \label{class1} The equivalence classes of $\sim$ are closed, both as subsets of the stable manifolds and hence as subsets of $\wtd \Lambda$.  
	\item \label{class2} Let $C^s(x)$ denote the connected component of $\wtd \Lambda \cap \stab x$ containing $x$.  Then $C^s(x) \subset [x]$.  
	\item \label{class3}  Let $y \in \stab x $ be such that $C^s(y)$ contains  points arbitrarily close to $C^s(x)$.  Then $C^s(y) \subset [x]$.  
\end{enumerate}
\end{claim}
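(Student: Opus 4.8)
The plan is to prove the five assertions of Claim~\ref{prop:classProperties} in the order \ref{class4}, \ref{class5}, \ref{class1}, \ref{class2}, \ref{class3}, since each relies on the earlier ones together with the basic properties of $l^u$, $d^u$, $d^s$ established in Section~\ref{sec:Prelim}. For \ref{class4}: suppose $x\sim y$ and let $L\in\R$; set $x_L = x+_u L$ and $y_L = \unst y\cap \stab{x_L}$, which exists since $x\sim y$. I must check $x_L\sim y_L$. First, $y_L\in\stab{x_L}$ by construction. Second, given any $x'\in\unst{x_L}$, exactness of $\alpha^u$ (Claim~\ref{prop:exact}, via Corollary~\ref{cor:intonepoint}) lets me transport $x'$ along stable leaves back to a point $x''\in\unst x$ with $\stab{x'}\cap\unst y \ne\emptyset$ iff $\stab{x''}\cap\unst y\ne\emptyset$; the latter holds by $x\sim y$. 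Concretely: the point $\unst y\cap\stab{x'}$ exists because $l^u(y_L,\,\unst{y_L}\cap\stab{x'})$ is forced to equal $l^u(x_L,x')$ by Corollary~\ref{cor:sameULength}, the same mechanism used in the proof of symmetry above. Hence the $\R$-action descends, and the equality $[x+_uL]=[x]+_uL$ is then just the statement that the descended action is well defined.

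For \ref{class5}: the covering action of $G$ and any lift $\td f$ permute the stable and unstable foliations $\Fols$ and $\unst{\cdot}$, map intersection points $\unst{\cdot}\cap\stab{\cdot}$ to intersection points, and (by the defining property $W^\sigma(g(x))\cap W^\sigma(x)=\emptyset$ for $g\ne 1$) are homeomorphisms of $\wtd\Lambda$. Since the relation $\sim$ is defined purely in terms of these incidences (``$y\in\stab x$ and $\unst y\cap\stab{x'}\ne\emptyset$ for all $x'\in\unst x$''), it is preserved by any homeomorphism of $\wtd\Lambda$ carrying $\Fols$ to $\Fols$ and $W^u$ to $W^u$; both $g\in G$ and $\td f$ qualify. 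For \ref{class1}: fix $x$ and a sequence $y_n\to y$ with $y_n\in[x]$, all in $\stab x$ (the class lies in one stable leaf). The limit $y$ lies in $\stab x$ since equivalence classes sit inside a single, closed-in-$\wtd\Lambda$ stable manifold, and $d^s(x,y_n)$ stays bounded. To see $y\sim x$, take $x'\in\unst x$ arbitrary; I want $\unst y\cap\stab{x'}\ne\emptyset$. Since $y_n\sim x$, the point $p_n := \unst{y_n}\cap\stab{x'}$ exists for each $n$; using continuity of the local product structure and the local product charts $V\subset M$ of Definition~\ref{def:LPC}, the $p_n$ converge (a subsequence suffices, using that $l^u(y_n,p_n)=l^u(x,x')$ is constant and $d^u$ is complete on unstable leaves by Corollary~\ref{cor:toRSS2}) to a point of $\unst y\cap\stab{x'}$. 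Closedness of $\wtd\Lambda$ then gives membership in $\wtd\Lambda$. Hence $y\sim x$ and the class is closed.

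For \ref{class2}: let $C^s(x)$ be the connected component of $\wtd\Lambda\cap\stab x$ through $x$; I claim the set $\{y\in C^s(x): y\sim x\}=[x]\cap C^s(x)$ is both open and closed in $C^s(x)$, hence all of it by connectedness. It is closed by \ref{class1}. For openness: if $y\sim x$, I use a local $s$-product neighborhood $V$ (Definition~\ref{def:LPC}) around $\pi(y)$; lifting, any point $y'$ in $\wtd\Lambda\cap\stab x$ close enough to $y$ has $\unst{y'}$ meeting each nearby stable leaf, and combined with $y\sim x$ and the holonomy-invariance \ref{class4}, one propagates the product relation from $y$ to $y'$ along the finitely many charts covering a compact piece of $\unst x$. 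This last propagation is the delicate point: one must check that finitely many local product charts suffice, which follows because a compact arc of $\unst x$ meets only finitely many charts (Claim~\ref{clm:compactInside}), but the arc of $\unst x$ relevant to testing $\sim$ is \emph{all} of $\unst x$, so one needs that the property ``$\unst{y'}\cap\stab{x'}\ne\emptyset$'' for one $x'$ at unstable-distance $L$ is equivalent to it for all $L$ — again Corollary~\ref{cor:sameULength}. Finally \ref{class3}: given $y\in\stab x$ with $C^s(y)$ accumulating on $C^s(x)$, pick $z_n\in C^s(y)$ with $z_n\to z\in C^s(x)$; by \ref{class2}, $z\in[x]$, and $z_n\in[z]=[x]$ for $n$ large once $z_n$ enters the (open, by the proof of \ref{class2}) neighborhood of $z$ in its class — wait, more carefully: $z_n\to z$ forces $z_n\in[x]$ by closedness \ref{class1} only in the limit, so instead observe that $z\sim x$ and $z\in C^s(y)$ (since $C^s(y)$ is closed in $\stab y=\stab x$), hence $z\sim y$, hence $[y]=[z]=[x]$, and therefore $C^s(y)\subset[y]=[x]$ by \ref{class2}. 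The main obstacle throughout is \ref{class2}/\ref{class3}: controlling the global product relation, which quantifies over the noncompact unstable leaf $\unst x$, using only the local product structure of $\Lambda$ and the completeness of $d^u$ — the key leverage is that Corollary~\ref{cor:sameULength} reduces the ``for all $x'\in\unst x$'' condition to a single test point.
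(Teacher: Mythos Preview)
Your arguments for \ref{class4}, \ref{class5}, and \ref{class1} are essentially the paper's (it declares the first two trivial and for the third uses the same large-rectangle idea you gesture at). The genuine gaps are in \ref{class2} and \ref{class3}.

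For \ref{class2}, your openness argument breaks down exactly where you flag it. You correctly observe that testing $y'\sim x$ quantifies over the \emph{entire} noncompact leaf $\unst x$, but your proposed escape---that Corollary~\ref{cor:sameULength} makes ``$\unst{y'}\cap\stab{x'}\neq\emptyset$ for one $x'$ at distance $L$'' equivalent to ``for all $L$''---is simply false. That corollary only says two curves with the same endpoints have the same $|l^u|$; it says nothing about the existence of intersections at larger unstable distances. The set $[x]\cap C^s(x)$ need not be open in $C^s(x)$ by any local argument. The paper's remedy is to \emph{stratify}: for each fixed $L>0$ set
\[
V_L=\{\,y\in\stab x\cap\wtd\Lambda \mid \stab{y+_u r}\cap\unst x\neq\emptyset \ \text{for all}\ |r|\le L\,\},
\]
show each $V_L$ is clopen (openness is now a finite check via a single large rectangle containing the arc $\{x+_u r:|r|\le L\}$, exactly as in \ref{class1}), conclude $C^s(x)\subset V_L$ by connectedness, and then intersect over all $L$.

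For \ref{class3}, your argument collapses the statement. You pick $z_n\in C^s(y)$ with $z_n\to z\in C^s(x)$ and then assert $z\in C^s(y)$ because $C^s(y)$ is closed. But $z\in C^s(x)\cap C^s(y)$ forces $C^s(x)=C^s(y)$ (they are connected components), whence $y\in C^s(x)$ and the conclusion is immediate from \ref{class2}; the entire content of \ref{class3} is the case $C^s(x)\cap C^s(y)=\emptyset$, where your $z$ cannot exist. (Moreover, even extracting a limit $z\in C^s(x)$ from the hypothesis requires compactness of $C^s(x)$, which you do not have---cf.\ Corollary~\ref{cor:largeComps}.) The paper instead argues by contradiction for a fixed target: suppose some $y'=y+_u L$ has $\stab{y'}\cap\unst x=\emptyset$; use compactness of $\Lambda$ to get a uniform $\delta>0$ such that any two points within $d^s$-distance $\delta$ lie in a common product chart of unstable width $\ge L$; then the hypothesis supplies $w\in C^s(y)$, $x'\in C^s(x)$ with $d^s(w,x')<\delta$, and the chart gives $\stab{w+_uL}\cap\unst{x'}\neq\emptyset$, which via \ref{class4} and \ref{class2} contradicts the choice of $y'$.
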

\begin{proof}
\ref{class4} and \ref{class5} are trivial.  

To see \ref{class1} let $x_j\to x$ in $\stab x$ where $x_j\sim x_k$ for all $j,k\in \N$.  Suppose there is some $x'\in \unst x$ so that $\stab {x'} \cap \unst{x_j} = \emptyset$ for some (hence all) $j$.  Let $C\subset \unst x $ be a compact connected set containing $x$ and $x'$.  Then there is some rectangle $V\subset \Lambda$ containing $\pi(C)$ by Claim \ref{clm:compactInside}. But then there is a product chart $\wtd V\subset \pi\inv (V)$ containing $C$ and $x_j$ for a sufficiently large $j$ contradicting the assumption that   $\stab {x'} \cap \unst{x_j} = \emptyset$ for all $j$.   Hence \ref{class1} holds.  

Fix an $L>0$. Clearly the set \[V= \{y\in \stab x \mid \stab{y+_u r} \cap \unst x\neq \emptyset \quad  \mathrm{ for \ all \ }  |r| \le L\}\]
is open in $\stab x$.  By a similar argument as above we see that $V$ is closed hence $C^s(x) \subset V$. Since $L$ was arbitrary \ref{class2} follows.  


For \ref{class3}, let $y$ satisfy the hypotheses and suppose $y'\in \unst y$ is such that $\stab {y'} \cap \unst x = \emptyset$ and let $L = l^u(y,y')$.  Since $\Lambda$ is compact we may find some $\delta>0$ so that for every $z\in \Lambda$ there is some rectangle $V(z, L)$ containing both the sets $\locStab[\delta] z\cap \Lambda$ and $\pi\left(\{\td z +_u r \mid r\le |L|\}\right)$ where $\td z$ is some lift of $z $ to $\wtd \Lambda$.  By assumption we may find a $w\in C^s(y)$ and $x'\in C^s(x)$  so that $d^s(w,x') < \delta$; setting $w'= w+_u L$  we may find a product chart containing $w,w'$, and $x'$, hence $\stab{w'}\cap \unst {x'} \neq \emptyset$.  
By \ref{class4} $w'\in [y'] $ and by \ref{class2} $x'\in [x]$ whence 
\[\stab {y'} \cap \unst x = \stab{w'} \cap \unst x = \stab{x'+_u L} \cap \unst x \neq \emptyset\]
a contradiction.
\end{proof}

We now define a metric on the quotient $\wtd \Lambda/{\sim}$ and study its induced topology.  
\subsubsection{Metrization of $\wtd \Lambda/{\sim}$.}
Denote by $\wtd \Omega$ the set of equivalence classes of $\wtd \Lambda$ under the equivalence relation $\sim$ 
and by $\Omega ^s([x]) $ the set of equivalence classes of $\wtd \Lambda \cap \stab x$ under $\sim$.  We introduce metric topologies on $\wtd \Omega$ and $\wtd \Omega^s$.  Note that the pseudometric $d^u $ on $\wtd \Lambda$ descends to a pseudometric on $\wtd \Omega$; that is, given two points $[x], [y]\in \wtd \Omega$  \[d^u([x],[y]):= d^u(x,y)\]  is well defined. 
We define a metric on each $\Omega^s([x])$ as follows.
\begin{definition}\label{def:StabMetric} Given $[x]\in \wtd \Omega$ and $[y] \in \Omega^s([x])$ let 
	\begin{align*}\label{eq:rs}
	r^s([x],[y]):= \sup\{r> 0 \mid \unst y \cap \stab {x\pm_u r'}\neq \emptyset \quad \forall \  0<r'< r\}\end{align*}
and 
\[d^s_\Omega([x],[y]) = \begin{cases}\dfrac{1}{r^s([x],[y])},& r^s([x],[y]) \neq \infty, \\ 0, &r^s([x],[y]) = \infty.\end{cases}\]
\end{definition}
Note that $r^s([x],[y]) = r^s([y],[x])$, $r^s([x],[y])>0$, and that $r^s([x],[y])\neq \infty$ unless $[x]= [y]$. Furthermore,
\begin{lemma}
$d^s_\Omega([x],[y]) $ is a metric on $\Omega^s([x])$.
\end{lemma}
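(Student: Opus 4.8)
The plan is to verify the three metric axioms for $d^s_\Omega$ on $\Omega^s([x])$; the only non-formal point is the triangle inequality, which I will obtain from the sharper, ultrametric-type estimate
\[ r^s([x],[z]) \;\ge\; \min\bigl(r^s([x],[y]),\, r^s([y],[z])\bigr)\qquad\text{for all }[y],[z]\in\Omega^s([x]). \]
Granting this, $d^s_\Omega([x],[z]) = 1/r^s([x],[z]) \le 1/\min(r^s([x],[y]),r^s([y],[z])) = \max\bigl(d^s_\Omega([x],[y]),\,d^s_\Omega([y],[z])\bigr) \le d^s_\Omega([x],[y]) + d^s_\Omega([y],[z])$; the remaining cases, in which some $r^s$ equals $\infty$ --- equivalently, by the observations following Definition \ref{def:StabMetric}, in which two of the three classes coincide --- are immediate. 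Symmetry of $d^s_\Omega$ follows from $r^s([x],[y]) = r^s([y],[x])$. Non-degeneracy likewise follows from the observations after Definition \ref{def:StabMetric}: $r^s>0$ gives $d^s_\Omega\ge 0$, and $d^s_\Omega([x],[y]) = 0$ forces $r^s([x],[y])=\infty$, hence $[x]=[y]$; conversely $[x]=[y]$ gives $r^s([x],[y])=\infty$ straight from Definition \ref{def:EQrel}, since $y\sim x$ makes $\unst y\cap\stab{x\pm_u r'}$ non-empty for every $r'>0$. One should also record, before speaking of a metric on $\Omega^s([x])$ at all, that $r^s$ depends only on the equivalence classes: replacing $x$ by $x'\sim x$ leaves $\stab{x\pm_u r'}$ unchanged because Claim~\ref{class4} gives $x'+_u r'\sim x+_u r'$ and hence $\stab{x'+_u r'}=\stab{x+_u r'}$, while replacing $y$ by $y'\sim y$ does not affect whether $\unst{y'}$ meets $\stab{x\pm_u r'}$, since $[y]=[y']$ forces $\unst{y'}$ to meet exactly the stable leaves that $\unst y$ meets (again by Definition \ref{def:EQrel}).

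The geometric ingredient is a short \emph{holonomy lemma}: if $x,y\in\wtd\Lambda$ with $y\in\stab x$ and $\unst y\cap\stab{x+_u t}\neq\emptyset$ for some $t\in\R$, then this intersection is a single point $w$ with $|l^u(y,w)| = |t|$; in particular $\stab w$ equals either $\stab{y+_u t}$ or $\stab{y-_u t}$. Uniqueness of $w$ is Corollary \ref{cor:intonepoint}. For the length identity I will compare two piecewise-smooth curves in $\wtd B$ joining $x$ to $w$: first, the arc in $\unst x$ from $x$ to $x+_u t$ (of signed unstable length $t$, by the definition of $+_u$) followed by the arc in $\stab{x+_u t}$ from $x+_u t$ to $w$ (of signed unstable length $0$, being tangent to $\Fols$); second, an arc in $\stab x$ from $x$ to $y$ (length $0$) followed by the arc in $\unst y$ from $y$ to $w$ (length $l^u(y,w)$). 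Corollary \ref{cor:sameULength} makes their absolute unstable lengths equal, i.e.\ $|t|=|l^u(y,w)|$, so $w=y+_u t$ or $w=y-_u t$. (Using exactness of $\alpha^u$, Claim \ref{prop:exact}, together with the fixed transverse orientation of $\Fols$ one could pin down $w=y+_u t$, but the unsigned form is all that is needed below.)

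With the lemma in hand the displayed estimate is quick. Fix representatives $y,z\in\wtd\Lambda\cap\stab x$ of $[y]$ and $[z]$ --- this is possible since $[y],[z]\in\Omega^s([x])$, and makes $\stab y=\stab x=\stab z$, so that $r^s([y],[z])$ is also defined --- put $a=r^s([x],[y])$, $b=r^s([y],[z])$, and take any $t$ with $0<t<\min(a,b)$. Because $t<a$, the sets $\unst y\cap\stab{x+_u t}$ and $\unst y\cap\stab{x-_u t}$ are non-empty, and by the holonomy lemma each of these single points lies on $\stab{y+_u t}$ or on $\stab{y-_u t}$. Because $t<b$, both $\unst z\cap\stab{y+_u t}$ and $\unst z\cap\stab{y-_u t}$ are non-empty, so $\unst z$ meets the stable leaf through each of the two points just found; that is, $\unst z\cap\stab{x+_u t}\neq\emptyset$ and $\unst z\cap\stab{x-_u t}\neq\emptyset$. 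Since $t\in(0,\min(a,b))$ was arbitrary, $r^s([x],[z])\ge\min(a,b)$, as claimed. I do not anticipate any genuine obstacle here: the only delicate-looking point is the orientation bookkeeping inside the holonomy lemma, and that is sidestepped by invoking only the unsigned conclusion $|l^u(y,w)|=|t|$, for which Corollary \ref{cor:sameULength} applies directly.
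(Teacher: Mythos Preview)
Your proof is correct and follows the same route as the paper: reduce the triangle inequality to the ultrametric estimate $r^s([x],[z]) \ge \min\{r^s([x],[y]), r^s([y],[z])\}$, which the paper asserts ``by definition of $r^s$'' while you justify it carefully via your holonomy lemma (itself an unsigned version of the computation already used in the proof that $\sim$ is an equivalence relation). The additional detail you supply---well-definedness of $r^s$ on classes and the appeal to Corollary~\ref{cor:sameULength}---is correct and simply fills in steps the paper leaves to the reader.
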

\begin{proof}
We clearly have $d^s_\Omega([x],[y]) = 0$ if and only if  $[x]=[y]$ and $d^s_\Omega([x],[y]) = d^s_\Omega([y],[x])$.  Thus we need only prove the triangle inequality
	\begin{align}\label{eqn:tri}d^s_\Omega([x],[y])\le d^s_\Omega([x],[z])+ d^s_\Omega([z],[y]).\end{align}
By definition of $r^s$ we have for $[y],[z]\in \Omega^s([x])$
\[r^s([x],[y]) \ge \min \{r^s([x],[z]), r^s([z],[y])\}. \]
Thus \begin{align}\label{eq:Lion}d^s_\Omega([x],[y])\le \max \{d^s_\Omega([x],[z]), d^s_\Omega([z],[y])\}\end{align}
and \eqref{eqn:tri} holds.  
\end{proof}

\begin{definition}\label{def:Metric}
Given two points $[x], [y]\in \wtd \Omega$ let $\Xi([x],[y])$ be the space of all sequences $([x_0] , [y_0],[x_1], [y_1] \dots,[x_k],[ y_k])$ in $ \wtd \Omega$ with 
\begin{enumerate}
\item $[x_0] = [x]$, and $[y_{k}] = [y]$
\item $y_{j} \in \unst {x_{j}}$ for $0\le j\le k$
\item $x_{j} \in \stab {y_{j-1}}$ for $1\le j\le k$.
\end{enumerate}
Given a $\xi = ([x_0], [y_0], \dots,[ y_{k}]) \in \Xi([x],[y])$ define 
\[l(\xi) := \sum _{j = 0}^{k} d^u( x_{j},y_{j})     +  \sum _{j = 1}^{k} d^s_\Omega ([x_{j}], [y_{j-1}])\]

and define $d_\Omega([x],[y])$ by 
\begin{align*} d_\Omega([x],[y])&:= \inf \left\{l(\xi)\mid \xi\in \Xi([x],[y])\right\}. \end{align*}
Clearly $d_\Omega$ defines a metric on $\Omega$.  
\end{definition}

\begin{corollary}
The group $G= \pi_1(B)$ acts via isometries on $(\wtd \Omega, d_\Omega)$.  Furthermore the dynamics on $\wtd \Omega$ induced by the dynamics $\td f \colon \Lambda \to \Lambda$, which we also denote by $\td f \colon \wtd \Omega\to \wtd \Omega$, acts conformally:  $d_\Omega (\td f ([x]),\td f([y])) = e^h d_\Omega ( [x],[y])$ for $[y]\in \unst {[x]}$ and 
$d_\Omega (\td f ([x]),\td f([y])) = e^{-h} d_\Omega ( [x],[y])$ for $[y]\in \Omega^s( [x])$.
\end{corollary}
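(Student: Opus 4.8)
The plan is to reduce the statement to two facts: that the two ``building blocks'' $d^u$ and $d^s_\Omega$ out of which $d_\Omega$ is assembled transform correctly under $G$ and under a lift $\td f$, and that $d_\Omega$ in fact coincides with $d^u$ along each leaf $\unst{[x]}$ and with $d^s_\Omega$ along each leaf $\Omega^s([x])$. First I would record that $G$ and any $\td f$ preserve the relation $\sim$ and the $\R$-action $+_u$ (Claim \ref{prop:classProperties}), so they act on $\wtd\Omega$, on each $\Omega^s([x])$, and on the sets $\Xi([x],[y])$ of admissible sequences; hence it suffices to track the effect on $d^u$ and $d^s_\Omega$. A deck transformation $g\in G$ satisfies $\pi\circ g=\pi$, so $g\gamma$ has the same $\pi$-image as $\gamma$; since by Definition \ref{defn:unstLength} the number $|l^u|$ of a curve depends only on that image and on $\mu^u$, we get $|l^u(g\gamma)|=|l^u(\gamma)|$, so $d^u$ is $G$-invariant. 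Likewise $g$ is an isometry of $\pi^*(g)$, so $d^s$ is $G$-invariant, and therefore so are $r^s$ and $d^s_\Omega$ (these are built only from $\sim$, $+_u$, $d^u$ and $d^s$). It follows that $G$ preserves $l(\xi)$ for every admissible sequence, so $d_\Omega$ is $G$-invariant, which gives the isometry claim.

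For $\td f$: stable and unstable manifolds are $\td f$-equivariant, so $\td f$ maps $\unst{[x]}$ onto $\unst{\td f([x])}$ and $\Omega^s([x])$ onto $\Omega^s(\td f([x]))$. By the transformation rule $f_*\mu^u_x=e^{-h}\mu^u_{f(x)}$ of Theorem \ref{thm:RSS}, $f$ multiplies $\mu^u$-measure by $e^{h}$; comparing Definition \ref{defn:unstLength} applied to a curve $\gamma\subset\td f^{-n}(\bigcup_{ij}\wtd V_{ij})$ and to its image $\td f(\gamma)\subset\td f^{-(n-1)}(\bigcup_{ij}\wtd V_{ij})$ — for which the downstairs set $C=\pi(\td f^{n}(\gamma'))$ is literally the same on each component — gives $|l^u(\td f(\gamma))|=e^{h}|l^u(\gamma)|$, that is, $d^u(\td f(x),\td f(y))=e^{h}d^u(x,y)$ and $\td f(x\pm_u r)=\td f(x)\pm_u e^{h}r$. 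Substituting this into Definition \ref{def:StabMetric} (the symmetric $\pm$ makes it irrelevant whether $\td f$ preserves or reverses the transverse orientation of $\Fols$) yields $r^s(\td f([x]),\td f([y]))=e^{h}r^s([x],[y])$, hence $d^s_\Omega(\td f([x]),\td f([y]))=e^{-h}d^s_\Omega([x],[y])$.

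Granting the two restriction identities — $d_\Omega=d^u$ on each $\unst{[x]}$ and $d_\Omega=d^s_\Omega$ on each $\Omega^s([x])$ — conformality is immediate: if $[y]\in\unst{[x]}$ then $\td f([y])\in\unst{\td f([x])}$, so $d_\Omega(\td f([x]),\td f([y]))=d^u(\td f(x),\td f(y))=e^{h}d^u(x,y)=e^{h}d_\Omega([x],[y])$, and symmetrically on stable leaves with $e^{-h}$. The unstable identity is easy. The trivial one-leg sequence gives ``$\le$''. For ``$\ge$'', since $\wtd B$ is simply connected Claim \ref{prop:exact} gives $\alpha^u=d\beta$ for a function $\beta\colon\wtd B\to\R$ with $l^u(\gamma)=\beta(\mathrm{end})-\beta(\mathrm{start})$; because $l^u$ vanishes on curves tangent to $\Fols$, $\beta$ is constant on stable leaves and descends to $\wtd\Omega$, with $d^u([x],[y])=|\beta(x)-\beta(y)|$; telescoping $\beta$ along an admissible sequence $\xi$ (using $\beta(x_j)=\beta(y_{j-1})$) gives $l(\xi)\ge\sum_j d^u(x_j,y_j)\ge|\beta(y)-\beta(x)|=d^u([x],[y])$, so in fact $d_\Omega\ge d^u$ throughout $\wtd\Omega$, with equality on unstable leaves.

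The stable identity is where the genuine difficulty lies, and I expect it to be the main obstacle. ``$\le$'' is again a trivial sequence, $([x],[x],[y],[y])$. For ``$\ge$'' one must show that no admissible $\xi$ from $[x]$ to $[y]$ with $[y]\in\Omega^s([x])$ has $l(\xi)<d^s_\Omega([x],[y])$. If $\xi$ has no unstable leg of positive length, then by Corollary \ref{cor:intonepoint} and strict monotonicity of $l^u$ along $\unst{\cdot}$ it collapses to a sequence lying inside $\stab x$, and the bound follows at once from the ultrametric inequality \eqref{eq:Lion}. In general I would argue that a sequence of small total length is confined to a product chart of $\stab x$ of unstable width $\gtrsim 1/l(\xi)$ — the widths being controlled by the defining property of $r^s$, the unstable displacements by the telescoping of $\beta$ — so that unstable holonomy carries $\xi$ onto a sequence in $\stab x$ of no greater length, reducing to the ultrametric case. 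Making this ``straightening'' precise, i.e.\ reconciling the unstable displacements of a general sequence with the ultrametric structure of $d^s_\Omega$, is the delicate point; everything else is bookkeeping.
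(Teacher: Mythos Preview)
Your argument for the isometry claim matches the paper's, just spelled out in full: the paper observes in one line that $d^u$ is $G$-invariant and that $d^s_\Omega$, being defined entirely through $r^s$ (which involves only $+_u$ and incidence of stable and unstable leaves, i.e.\ only $d^u$; your appeal to $d^s$ here is harmless but unnecessary), is therefore also $G$-invariant.  Since $G$ carries admissible sequences to admissible sequences, $d_\Omega$ is $G$-invariant.  Your computation that $d^u$ scales by $e^h$ and hence $d^s_\Omega$ by $e^{-h}$ under $\td f$ is exactly the paper's ``$d^u$ transforms according to Theorem~\ref{thm:RSS}.''

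Where you go beyond the paper is in insisting on the restriction identities $d_\Omega=d^u$ along $\unst{[x]}$ and $d_\Omega=d^s_\Omega$ along $\Omega^s([x])$.  The paper's three-sentence proof does not address these at all.  Your unstable identity and its proof via telescoping the primitive $\beta$ of the exact cochain $\alpha^u$ are correct and are a clean argument the paper leaves implicit (it is essentially Corollary~\ref{cor:sameULength}).  The stable identity, which you rightly flag as the delicate point, is \emph{not} established in the paper and is not needed downstream: the only later use of ``$\td f$ conformally contracting on $\Omega^s([\td q])$'' (in the proof of Lemma~\ref{top3}) is a statement about the metric $d^s_\Omega$ on $\Omega^s$, and that scaling you have already shown.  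So you are attempting to prove a strictly stronger reading of the corollary than the paper proves or requires; for the corollary as it is actually used, it suffices to record the transformation laws for $d^u$ and $d^s_\Omega$ separately, which is all the paper does.
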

\begin{proof}
The pseudometric $d^u$ is preserved under $G$.  Since $d^s_\Omega$ is defined via $d^u$, it is also preserved.  Furthermore, we have that $d^u$ transforms according to Theorem \ref{thm:RSS}.
\end{proof}
Note that since $G$ acts via invertible isometries on $(\wtd \Omega, d_\Omega)$, it acts via homeomorphisms on $(\wtd \Omega, d_\Omega)$ despite the fact that the metric topology may not coincide with the quotient topology.  

\begin{lemma}\label{lem:SomeProps}
For $\wtd \Omega$ and $\Omega^s([x])$ we have
\begin{enumerate} [ label=\emph{\alph*)}, ref=\ref{lem:SomeProps}(\alph*)] 
	\item \label{top0a} the topology on $ \Omega^s([x])$ induced by the metric $d^s_\Omega$ is weaker than the quotient topology on $\Omega^s([x])$ inherited as the quotient ${ \Omega^s([x]) = (\wtd \Lambda\cap \stab x)/{\sim}}$;
	\item \label{top0b} the topology on $\wtd \Omega$ induced by the metric $d_\Omega$ is weaker than the quotient topology on $\wtd \Omega$ inherited as the quotient ${\wtd \Omega = \wtd \Lambda/{\sim}}$.  
	\end{enumerate}
Furthermore for  $\wtd \Omega$ and $\Omega^s([x])$ endowed with their metric topologies
\begin{enumerate}[ label=\emph{\alph*)},  ref=\ref{lem:SomeProps}(\alph*)] 
\addtocounter{enumi}{2}
	\item\label{contQuo} the quotient map $\wtd \Lambda \to \wtd \Omega$ is continuous;
	\item \label{top1} $\wtd \Omega$ and $\Omega^s([x])$ are  Hausdorff;
	\item \label{top3} either $\Omega^s([x])$ is  perfect for all $[x]\in \wtd \Omega$ or is a singleton for all $[x]\in \wtd  \Omega$. 
\end{enumerate}
\end{lemma}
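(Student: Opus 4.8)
The plan is to dispatch parts (a)--(c) by a single argument, then establish (d) and finally (e). Parts (a), (b), (c) all amount to the continuity of a natural quotient map into a metric space: continuity of a quotient map $q\colon Y\to(Z,d)$ is exactly the assertion that the $d$-topology on $Z$ is weaker than the $q$-quotient topology, which gives (a) and (b), and (c) is the special case $Y=\wtd\Lambda$, $Z=\wtd\Omega$. So I would fix a convergent sequence $z_j\to z$ in $\wtd\Lambda$ (or in $\wtd\Lambda\cap\stab z$ for the stable-leaf version) and show $d_\Omega([z_j],[z])\to 0$ (resp.\ $d^s_\Omega([z_j],[z])\to 0$). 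The mechanism is exactly the local-product-structure argument from the proof of Claim \ref{prop:classProperties}: given $L>0$, compactness of $\Lambda$ and Claim \ref{clm:compactInside} produce a $\delta>0$ such that every $p\in\Lambda$ lies in a rectangle containing both $\locStab[\delta]{p}\cap\Lambda$ and the projection of a lifted unstable arc $\{\wtd p +_u r : |r|\le L\}$; lifting that rectangle to a product chart and using that $z_j$ eventually lies within $d^s$-distance $\delta$ of the leaf of $z$, one gets $\unst z\cap\stab{z_j\pm_u r}\neq\emptyset$ for all $0<r<L$, hence $r^s([z_j],[z])\ge L$ for $j$ large, so $d^s_\Omega([z_j],[z])\to 0$. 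For (b)/(c) one first slides $z_j$ into a product chart around $z$, writing it via a point $w_j\in\unst z$ with $d^u(z,w_j)\to 0$ and $z_j\in\stab{w_j}$ with $d^s(z_j,w_j)\to 0$, and then bounds $d_\Omega([z],[z_j])\le d^u(z,w_j)+d^s_\Omega([z_j],[w_j])\to 0$ using the two-step zigzag $([z],[w_j],[z_j],[z_j])$ together with the estimate just described.

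For (d): since $d^s_\Omega$ was already shown to be a metric, $(\Omega^s([x]),d^s_\Omega)$ is Hausdorff; the real content is that $d_\Omega$ is positive definite on $\wtd\Omega$, i.e.\ $d_\Omega([x],[y])=0\Rightarrow[x]=[y]$. First I would prove $d_\Omega([x],[y])\ge d^u([x],[y])$: concatenating the unstable and stable segments of a zigzag into an honest piecewise-smooth path in $\wtd B$ and using that $l^u$ vanishes on curves tangent to $\Fols$ gives $|l^u(\text{path})|\le l(\xi)$, while by Corollary \ref{cor:sameULength} this $|l^u|$ equals $d^u([x],[y])$; in particular $d_\Omega([x],[y])=0$ forces $H(x)=H(y)$, where $H\colon\wtd B\to\R$ is a primitive of the exact cochain $\alpha^u$. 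The key further step is a ``parallel zigzag'' device: if $\xi$ is a zigzag of length $L$ from $[x]$ to $[y]$ and $|r|$ is small relative to $1/L$, then the $+_u$-translate $\xi+_u r$ is again a valid zigzag of finite length — each stable step $([x_{j+1}],[y_j])$ has $r^s([x_{j+1}],[y_j])=1/d^s_\Omega([x_{j+1}],[y_j])\ge 1/L>|r|$, which by the $+_u$-characterization of $r^s$ keeps $x_{j+1}+_u r\in\stab{y_j+_u r}$, and a short computation bounds the shifted stable steps by $\le 2\,d^s_\Omega([x_{j+1}],[y_j])$. Hence $d_\Omega([x],[y])=0$ implies $d_\Omega([x]+_u r,[y]+_u r)=0$ for every $r\in\R$. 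Feeding this, together with the $\td f$-conformality of $d_\Omega$ (by the Corollary preceding the lemma $\td f$ is in fact $e^h$-bi-Lipschitz for $d_\Omega$, so the set of $d_\Omega$-null pairs is $\td f$- and $G$-invariant), into the local-product-structure estimates and Claim \ref{prop:classProperties}\ref{class3} (which upgrades ``a component of $\stab x\cap\wtd\Lambda$ accumulates on $C^s(x)$'' to ``$\sim$-equivalence''), one rules out $[x]\neq[y]$ with $d_\Omega([x],[y])=0$.

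For (e): let $Z=\{[x]\in\wtd\Omega : \Omega^s([x])\text{ is not a singleton}\}$. By Claim \ref{prop:classProperties}\ref{class5} every $g\in G$ and any lift $\td f$ permute $\sim$-classes and carry each $\Omega^s([x])$ bijectively onto another, so $Z$ is $G$- and $\td f$-invariant and $\pi(Z)\subset\Lambda$ is $f$-invariant. Since $\restrict f\Lambda$ is topologically mixing, hence transitive, once one shows $\pi(Z)$ is clopen — again via the local product structure together with Claim \ref{prop:classProperties}\ref{class1} and \ref{class3} — it must be all of $\Lambda$ or empty; in the latter case every $\Omega^s([x])$ is a singleton. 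In the former case, if some $\Omega^s([x])$ had an isolated point $[z]$, then since $\td f$ contracts $d^s_\Omega$ by $e^{-h}$ and the orbit is dense, pulling the isolation back by iteration would produce components of nearby $\sim$-classes accumulating on $C^s(z)$, contradicting Claim \ref{prop:classProperties}\ref{class3}; so every $\Omega^s([x])$ is perfect.

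I expect the main obstacle to be the positive-definiteness step of (d): because $d^u$ is only a \emph{pseudo}metric on the leaf space of $\Fols$ (two points on distinct stable leaves can have $d^u=0$), one cannot simply reduce $d_\Omega([x],[y])=0$ to ``$y\in\stab x$'' and invoke the $d^s_\Omega$ metric, so the argument must genuinely interlock the $+_u$-shift, the self-similarity under $\td f$, compactness of $\Lambda$, and the closedness statements of Claim \ref{prop:classProperties}. The clopen-ness step of (e) is comparably delicate and is where the topological-mixing hypothesis is used in an essential way.
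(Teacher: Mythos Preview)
Your treatment of (a)--(c) is correct and essentially coincides with the paper's argument (the paper shows $d^s_\Omega$-balls pull back to open sets via product charts; your sequential version encodes the same content).

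For (d) you are working far harder than necessary, and the final step is not an argument. The paper simply notes that metric spaces are Hausdorff, having already declared $d_\Omega$ a metric in its definition. If one wants to justify positive-definiteness, the direct route is a short holonomy computation: given any zigzag $\xi$ from $[x]$ to $[y]$ of length $<1/R'$ with $R'>1$, one checks inductively that each $\unst{x_j}$ meets $\stab x$ (each stable step has $r^s>R'$ while the cumulative unstable drift stays $<1/R'<R'$), so in particular $w:=\unst y\cap\stab x$ exists; moreover $d^u(w,y)=d^u(x,y)\le l(\xi)$ and, by the ultrametric inequality \eqref{eq:Lion} for $d^s_\Omega$, one gets $d^s_\Omega([x],[w])\le R'/((R')^2-1)$. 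Letting $R'\to\infty$ forces $w=y$ and $d^s_\Omega([x],[y])=0$, i.e.\ $[x]=[y]$. (This is exactly the computation that reappears later as Claim~\ref{claim:Props}.) Your parallel-zigzag/$\td f$-conformality scheme may be salvageable, but the sentence ``Feeding this \dots\ one rules out $[x]\neq[y]$'' is not a proof: you have not said which point of $\stab x$ you compare $y$ to, nor how Claim~\ref{prop:classProperties}\ref{class3} --- a statement about connected components within a single stable leaf --- is supposed to produce a contradiction when $y$ need not lie in $\stab x$ at all.

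For (e) the paper's route is different and cleaner; it does not introduce a clopen invariant set. Instead: if some $[z]$ is isolated in $\Omega^s([z])$, then by $u$-holonomy every $[z']\in\unst{[z]}$ is isolated in $\Omega^s([z'])$; choose a periodic $q\in\Lambda$ with a lift $\td q$ in $\unst{[z]}$, arranged (using density of $\stab q\cap\Lambda$) so that if \emph{any} $\Omega^s([z'])$ along $\unst{[z]}$ is nontrivial then $\Omega^s([\td q])$ is. After passing to an iterate, some lift $\td f$ fixes $[\td q]$ and contracts $d^s_\Omega$ on $\Omega^s([\td q])$ by $e^{-h}$; an isolated fixed point of a strict contraction of a space with more than one point is impossible, so $\Omega^s([\td q])=\{[\td q]\}$, hence every $\Omega^s$ along $\unst{[z]}$ is a singleton, and since $\wtd\Omega=\orbit_G(\unst{[z]})$ every $\Omega^s([x])$ is. Your plan has two soft spots: closedness of $\pi(Z)$ is not addressed (the ``other'' class in $\Omega^s([x_j])$ could escape to infinity as $x_j\to x$), and the final perfectness step conflates different stable slices --- iterating $\td f$ moves $[z]$ out of $\Omega^s([z])$, so there is no fixed ambient set in which ``pulling back isolation'' makes sense without first anchoring at a periodic point, which is precisely what the paper does.
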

\begin{proof}
To see \ref{top0a}, fix a $t>0$ and let $U:=B_{d^s_\Omega}([x], t)$.  Then 
\[U = \left\{y\in \stab x \cap \wtd \Lambda \mid \unst y \cap \stab{x\pm_u r'}\neq \emptyset \quad \forall \ 0<r'< \frac{1}{t}    \right\}.\]
Clearly $U$ is open as a subset of $\stab x\cap \wtd \Lambda$ since for any $y\in U$ we may find an open  product chart containing $y$ and $y\pm_u\frac{1}t$.  \ref{top0b} then follows from \ref{top0a}, and \ref{contQuo} follows from \ref{top0b}.
\ref{top1} follows since the topologies are metric. 

To see \ref{top3} assume $\Omega^s([x])$ is not perfect, hence  contains an isolated point $[z]$.   Then for all $z'\in \unst z$ we have $[z']$ is isolated in $\Omega^s([z'])$.  Periodic points are dense in $\Lambda$, hence we may find some periodic $q\in \Lambda$ so that $[\td q] \in \unst {[z]}$ for some lift $\td q $ of $q$ in $\wtd \Lambda$.  Furthermore, since $\stab q\cap \Lambda$ is dense in $\Lambda$ we may assume that if there is any $[z']\in \unst {[z]}$ so that $\Omega^s([z']) \neq [z']$ then $\Omega^s([\td q]) \neq [\td q]$.  Passing to an iterate of $f$ and choosing an appropriate lift $\td f\colon \wtd  \Lambda \to \wtd \Lambda$ of $f$ we may assume that $\td f ([q]) = [q]$.  Since $\td f$ is conformally contracting on $\Omega^s([\td q])$, the assumption $\Omega^s([\td q]) \neq [\td q]$ contradicts that $[\td q]$ is isolated in $\Omega^s([\td q])$.  Thus we conclude for every $[z']\in \unst {[z]}$ that $\Omega^s([z']) = [z']$.  Furthermore, we must have  $\wtd \Omega = \orbit_{G}\big( \unst{[z]}\big)$, hence we have $\Omega^s([x])$ is a singleton for every $[x]\in \wtd  \Omega$.  
\end{proof}

The proof of Theorem \ref{thm:main} will follow from considering two cases.  In the first case we will assume that  $G$ acts properly discontinuously on $\wtd \Omega$ and deduce that $\Lambda$ is expanding.    We will then show that if $G$ fails to act properly discontinuously on $\wtd \Omega$ then  $\wtd \Lambda$ has a product structure which will be used to obtain a conjugacy between $\restrict f \Lambda$ and an automorphism of a toral solen\-oid.

\subsection{Case 1: $G$ acts properly discontinuously on $\wtd \Omega$}
The goal of this section is to prove the following proposition.  
\begin{proposition}\label{prop:PDaction}
Suppose $G$ acts properly discontinuously on $\wtd \Omega$.  Then $\Lambda$ is expanding.  
\end{proposition}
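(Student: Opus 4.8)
The plan is to show that if $G$ acts properly discontinuously on $\wtd\Omega$ then the quotient $\wtd\Omega/G$ is a reasonable space on which $f$ acts, and this forces the stable slices $\stab x \cap \Lambda$ to be totally disconnected, which is precisely the definition of an expanding attractor. The key object is the metric quotient $\wtd\Omega = \wtd\Lambda/{\sim}$ together with the $G$-action by isometries and the conformal dynamics $\td f$. First I would observe that under the properly discontinuous hypothesis, $\Omega := \wtd\Omega/G$ is a Hausdorff metric space, $\restrict f \Lambda$ descends to a map $\bar f\colon \Omega\to\Omega$, and the natural map $\Lambda \to \Omega$ (collapsing each equivalence class $[x]$ inside $\stab x\cap\Lambda$) is a continuous surjection semiconjugating $f$ to $\bar f$. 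The image $\Omega$ carries the one-dimensional unstable structure: the pseudometric $d^u$ descends, and $\bar f$ expands $d^u$ by $e^h$ on unstable leaves while contracting $d^s_\Omega$ by $e^{-h}$ on stable slices $\Omega^s$.

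Next I would analyze the local structure of $\Omega$. Since $\restrict{\dim E^u}\Lambda = 1$, the unstable leaves in $\Omega$ are intervals, and $\bar f$ is conformally expanding along them, so $\Omega$ is locally modeled (via product charts) on $\Omega^s([x]) \times (\text{interval})$. The crucial point is Lemma \ref{lem:SomeProps}\ref{top3}: either every $\Omega^s([x])$ is a singleton, or every $\Omega^s([x])$ is perfect. In the first case each equivalence class $[x]$ equals all of $\stab x \cap \wtd\Lambda$, so the stable manifolds meet $\Lambda$ in single points — but that cannot happen for a hyperbolic attractor with $\dim\Lambda \ge 1$ unless $\Lambda$ is already zero-dimensional in the stable direction, i.e. $\Lambda$ is expanding (this is where I would need the topological-dimension bookkeeping: $\dim\Lambda = \dim E^u + \dim(\stab x \cap \Lambda)$ in the presence of local product structure). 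So I can assume every $\Omega^s$ is perfect.

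Then the main work is to derive a contradiction with proper discontinuity in the perfect case, or equivalently to show that properly discontinuous forces the singleton case. The strategy: if $\Omega^s([x])$ is perfect, I would build a sequence of distinct equivalence classes $[x_n]$ inside a single stable slice accumulating on $[x]$ with $d^s_\Omega([x_n],[x])\to 0$; using density of periodic points and the conformal contraction of $\td f$ on $\Omega^s$, one can arrange (after passing to an iterate and an appropriate lift fixing $[q]$) that the $G$-orbit of a small metric ball meets itself infinitely often — deck transformations coming from loops in $B$ that permute the fibers of the $\Sigma$-bundle must move points arbitrarily little in $d_\Omega$ while not fixing them, violating proper discontinuity. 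Concretely I expect to use that the holonomy of $\Fols$ around loops in $B$ acts on the transversal $\Omega^s$ and, if that transversal is perfect, this holonomy group cannot act properly discontinuously. The main obstacle, and the step I'd spend the most care on, is making this accumulation argument precise: producing the deck transformations $g_n \to \mathrm{id}$ (in the appropriate sense) with $g_n$ moving a fixed compact set of $\wtd\Omega$ a bounded amount but with no finite subcover — i.e. genuinely contradicting proper discontinuity — since $d_\Omega$ need not induce the quotient topology (Lemma \ref{lem:SomeProps}\ref{top0b}), so I must work with the metric topology throughout and cannot freely invoke compactness of $\Lambda$ in the quotient. Once that contradiction is in hand, only the singleton case survives, giving $\stab x\cap\Lambda$ totally disconnected and hence $\Lambda$ expanding.
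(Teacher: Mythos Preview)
There is a genuine gap. You have the meaning of the dichotomy in Lemma \ref{lem:SomeProps}\ref{top3} backwards. Saying that $\Omega^s([x])$ is a singleton means $\stab x \cap \wtd\Lambda$ consists of a \emph{single equivalence class}, i.e.\ $[x]=\stab x\cap\wtd\Lambda$; it does \emph{not} say that the stable slice meets $\wtd\Lambda$ in a single point. In fact the singleton case is exactly global product structure on $\wtd\Lambda$ (this is how the paper uses it in Lemma \ref{lem:notPD}), and by Proposition \ref{prop:MainAlg} that leads to the solenoidal, non-expanding conclusion. So even if your perfect-case contradiction were carried out, the surviving singleton case would not give you ``expanding'' at all --- it would give you the opposite alternative in Theorem \ref{thm:main}.

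The paper's argument is quite different and does not pass through Lemma \ref{lem:SomeProps}\ref{top3}. It forms the Hausdorff quotient $\Omega=\wtd\Omega/G$, observes that the induced map $q'\colon\Lambda\to\Omega$ is proper (compact source, Hausdorff target), and hence that each equivalence class $[x]\subset\wtd\Lambda$ is compact. The key input you are missing is the diameter function $r(x)=\diam C^s(\td x)$ on connected components of stable slices, together with the dichotomy of Corollary \ref{cor:largeComps}: either $r\equiv 0$ or $r\equiv\infty$. Since $C^s(\td x)\subset [x]$ (Claim \ref{class2}) and $[x]$ is compact, $r$ is bounded, forcing $r\equiv 0$; thus stable slices are totally disconnected and $\Lambda$ is expanding. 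The relevant dichotomy is about the size of \emph{connected components} of stable slices, not about whether $\Omega^s$ is a singleton or perfect.
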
  

For $\td x\in \wtd \Lambda$ denote by $C^s(\td x)$ the connected component of $\wtd \Lambda \cap \stab {\td x}$ containing $\td x$.  
We define $r\colon\Lambda \to \R\cup \{\infty\}$ by \begin{align*}\label{eq:r} r\colon x \mapsto \sup _{\td y\in C^s(\td x)  } \{ d^s(\td x, \td y)\}\end{align*}
where $\td x$ is any lift of $x$ to $\wtd \Lambda$.  

Given  a metric space $(X, \rho)$ and a subset $Y\subset X$ we call \[\diam (Y) := \sup\{\rho(x,y)\mid x,y\in Y\}\] the \emph{diameter} of $Y$.   For any $x\in X$ and $Y\subset X$ we write \[\rho(x, Y) := \inf \{\rho(x,y)\mid y \in Y\}.\]

\begin{definition}
Let $\{A_n\}$ be a countable sequence of subsets in a metric space  $(X, \rho)$.  We define the \emph{Kuratowski limit supremum} by
\begin{align*}\limsup _{n\to \infty} A_n := \{ x \in X \mid \liminf_{n \to \infty} \rho (x, A_n) = 0\}.\end{align*}
Clearly the Kuratowski limit supremum is a closed set for any collection $\{A_n\}$.  
\end{definition}

\begin{lemma}\label{lemma:connectedlimits}
Let $(X,\rho)$ be a proper metric space; that is, one for which any closed ball $\{y\in X\mid d(x,y)\le R\}$ is compact.  Let $\{A_n\}\subset X$ be a countable sequence of subsets such that
\begin{enumerate}
\item $\limsup_{n\to \infty}\diam (A_n)$ is finite;
\item  each $A_n$ is connected;
\item there exists a Cauchy sequence $\{x_n\}$ with $x_n \in A_n$.  
\end{enumerate}
Then $\limsup _{n\to \infty} A_n $ is connected.
\end{lemma}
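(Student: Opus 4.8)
The plan is to show $\limsup_{n\to\infty} A_n$ is connected by a standard argument by contradiction, exploiting properness to extract convergent subsequences and the hypothesis about the Cauchy sequence $\{x_n\}$ to glue the pieces. Suppose $L := \limsup_{n\to\infty} A_n$ is disconnected, so we may write $L = K_0 \sqcup K_1$ with $K_0, K_1$ nonempty, closed, and disjoint. Since $\limsup_{n\to\infty}\diam(A_n)$ is finite and $\{x_n\}$ is Cauchy (hence bounded), all but finitely many $A_n$ lie in a fixed bounded region of $X$; by properness we may pass to a closed ball $\overline{B}$, which is compact, containing all $A_n$ for $n$ large (discarding the finitely many exceptions, which does not affect the $\limsup$). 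Then $L \subset \overline{B}$ is compact, so $K_0$ and $K_1$ are disjoint compact sets and $\delta := \rho(K_0, K_1) > 0$.

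The key step is to show that for infinitely many $n$, $A_n$ must come within, say, $\delta/3$ of \emph{both} $K_0$ and $K_1$. First, let $x_\infty := \lim_n x_n$. Since each $x_n \in A_n$ and the sequence converges, one checks $x_\infty \in L$, so WLOG $x_\infty \in K_0$. I then claim there is a subsequence $A_{n_k}$ that meets the $\delta/3$-neighborhood of $K_1$: pick $y \in K_1 \subset L$; by definition of $L$ there is a subsequence and points $z_{n_k} \in A_{n_k}$ with $\rho(y, z_{n_k}) \to 0$, so eventually $z_{n_k}$ is within $\delta/3$ of $K_1$. Along this same subsequence $x_{n_k} \to x_\infty \in K_0$, so eventually $x_{n_k}$ is within $\delta/3$ of $K_0$. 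Now each $A_{n_k}$ is connected and contains a point within $\delta/3$ of $K_0$ and a point within $\delta/3$ of $K_1$; letting $U_0, U_1$ be the open $\delta/3$-neighborhoods of $K_0, K_1$ (disjoint, since $\rho(K_0,K_1) = \delta$), connectedness of $A_{n_k}$ forces a point $w_{n_k} \in A_{n_k}$ with $w_{n_k} \notin U_0 \cup U_1$, i.e. $\rho(w_{n_k}, K_0) \ge \delta/3$ and $\rho(w_{n_k}, K_1) \ge \delta/3$.

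Finally, the points $w_{n_k}$ all lie in the compact set $\overline{B}$, so by compactness some sub-subsequence converges to a point $w_\infty \in \overline{B}$ with $\rho(w_\infty, K_0) \ge \delta/3$ and $\rho(w_\infty, K_1) \ge \delta/3$. But $\rho(w_\infty, A_{n_k}) \le \rho(w_\infty, w_{n_k}) \to 0$ along the convergent sub-subsequence, so $\liminf_n \rho(w_\infty, A_n) = 0$, which means $w_\infty \in L = K_0 \sqcup K_1$ — contradicting that $w_\infty$ is bounded away from both $K_0$ and $K_1$. Hence $L$ is connected. The main obstacle is bookkeeping the passage to subsequences so that the Cauchy point $x_\infty$ and the separation witnesses $w_{n_k}$ are simultaneously controlled along one common subsequence; properness of $X$ is what makes every extraction step legitimate, and the finiteness of $\limsup \diam(A_n)$ together with boundedness of $\{x_n\}$ is what confines everything to a single compact ball.
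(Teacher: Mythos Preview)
Your proof is correct and follows essentially the same approach as the paper's: both confine all but finitely many $A_n$ to a compact ball via properness and the diameter/Cauchy hypotheses, assume a disconnection of the $\limsup$, use connectedness of each $A_{n_k}$ to produce points lying outside both separating pieces, and extract a limit point that lands in $\limsup A_n$ yet in neither piece. The only cosmetic difference is that the paper phrases the separation via arbitrary open sets $N_1,N_2$ and finds the contradictory point on $\partial N_1$, whereas you use explicit $\delta/3$-neighborhoods of the compact pieces $K_0,K_1$; the underlying mechanism is identical.
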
  Note that the result need not hold if assumption 3 is omitted.  
\begin{proof}
By assumption 3, we may fix $x := \lim _{n\to \infty}x_n \in \limsup_{n\to \infty} A_n $.  By assumption 1 we may find an $L$ and $N$ so that for all $n\ge N$, the inclusion $A_n \subset B(x, L)$ holds.  
Suppose $\limsup _{n\to \infty} A_n$ is disconnected.  Let $N_1, N_2\subset B(x, L)$ be two disjoint open sets such that  $x\in N_1$,  $\big(\limsup_{n\to \infty}  A_n\big)\subset N_1\cup N_2$ and  $\big(\limsup_{n\to \infty} A_n\big) \cap N_2\neq \emptyset$.

By assumption 3 we may find an $M$ such that for all $n\ge M$, we have $A_n \cap N_1 \neq \emptyset$.  Furthermore, we may find an infinite subsequence $\{n_j\}$ such that $A_{n_j}\cap N_2 \neq \emptyset$.  Since $A_{n_j}$ is connected we  have $A_{n_j} \cap \partial( N_1) \neq \emptyset$.   For each $j\in \N$ pick some $a_j \in A_{n_j} \cap \partial( N_1) $.  Then since $\partial( N_1 )$ is compact we may find some $y \in \partial(N_1)$ that is an accumulation point of $\{a_j\}$.  But  this implies that $y\in  \limsup_{n\to \infty} A_n$ whence $\limsup_{n\to \infty} A_n \cap \partial(N_1) \neq \emptyset$, a contradiction.  
\end{proof}

\begin{lemma}
The function $r\colon \Lambda \to \R^+ \cup \{\infty\}$  is upper semicontinuous.   
\end{lemma}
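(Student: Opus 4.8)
The plan is to show upper semicontinuity of $r$ by a limiting argument that combines the compactness of $\Lambda$ with the structural results on the equivalence classes $\sim$, in particular Claim \ref{prop:classProperties}. Recall $r(x)$ is the ``radius'' of the connected component $C^s(\td x)$ of $\wtd\Lambda\cap\stab{\td x}$ through a lift $\td x$, measured in the leaf-wise stable metric $d^s$; this is well defined independent of the choice of lift since $G$ permutes the components and acts by $d^s$-isometries. To prove upper semicontinuity I must show: if $x_n\to x$ in $\Lambda$, then $\limsup_{n\to\infty} r(x_n)\le r(x)$. Equivalently, I fix a sequence $x_n\to x$ with $r(x_n)\to L$ (passing to a subsequence so the limit exists in $[0,\infty]$) and show $r(x)\ge L$; if $L=0$ there is nothing to prove, so assume $L>0$.

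First I would lift the picture: choose lifts $\td x_n\to\td x$ in $\wtd\Lambda$ (possible since $\pi$ is a covering and $x_n\to x$; use a local product neighborhood of $\td x$). The components $A_n:=C^s(\td x_n)$ are connected subsets of the stable leaves $\stab{\td x_n}$, which vary continuously with the basepoint; using a uniform local product chart around $\td x$ one transports these into a fixed leaf (or a fixed ambient chart) so that Lemma \ref{lemma:connectedlimits} applies. The three hypotheses of that lemma are to be verified: the $A_n$ are connected by definition; the Cauchy sequence is $\{\td x_n\}$ itself with $\td x_n\in A_n$ and $\td x_n\to\td x$; and the diameters $\diam(A_n)=r(x_n)$ are bounded since $r(x_n)\to L<\infty$ — here is the one place we genuinely need $L<\infty$ as an \emph{a priori} hypothesis on the subsequence, which is harmless since we are computing a $\limsup$. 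Then Lemma \ref{lemma:connectedlimits} gives that $K:=\limsup_{n\to\infty}A_n$ is a \textbf{connected} closed subset of $\stab{\td x}$ containing $\td x$, hence $K\subset C^s(\td x)$ by definition of the connected component.

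The remaining point is to show $\diam_{d^s}(K)\ge L$, which then gives $r(x)\ge\diam(C^s(\td x))\ge\diam(K)\ge L$ and finishes the proof. For this, for each $n$ pick $\td y_n\in A_n$ with $d^s(\td x_n,\td y_n)\ge r(x_n)-1/n$. These $\td y_n$ live in a $d^s$-ball of radius roughly $L+1$ around $\td x$ (transported into the fixed leaf), which is compact, so a subsequence converges to some $\td y\in\stab{\td x}$; by construction $\td y\in K$, and $d^s(\td x,\td y)=\lim d^s(\td x_n,\td y_n)\ge L$, using continuity of $d^s$ under the transport maps. Hence $\diam(K)\ge L$ as needed.

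I expect the main obstacle to be purely technical: making precise the ``transport into a fixed leaf'' so that (i) connectedness is preserved, (ii) the leaf-wise metrics $d^s$ converge uniformly on compact sets to the limit leaf's metric, and (iii) the Kuratowski $\limsup$ computed after transport really does land inside $C^s(\td x)$ rather than some other component. This is handled by covering $\Lambda$ by finitely many local $s$-product neighborhoods as in Definition \ref{def:LPC}, noting that within such a neighborhood the stable leaves are graphs over a fixed disk with smoothly varying metrics, and that the holonomy between nearby leaves is a homeomorphism close to the identity in the $C^0$ (indeed $d^s$-Lipschitz-controlled) sense; the finitely-many-chart argument then patches local estimates into the global statement. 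Everything else is a routine application of Lemma \ref{lemma:connectedlimits} together with properness of $(\stab{\td x},d^s)$.
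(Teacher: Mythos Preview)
Your approach via Lemma \ref{lemma:connectedlimits} is the paper's, but the paper makes two simplifications that dissolve both of the difficulties you flag.

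First, the paper observes that $r$ is continuous along unstable leaves (morally because $C^s(\td x)\subset[\td x]$ by Claim \ref{class2}, so unstable holonomy carries $C^s(\td x)$ homeomorphically onto $C^s(\td x')$ for $\td x'\in\unst{\td x}$). Combined with local product structure this reduces the problem to sequences $\td x_n\to\td x$ lying \emph{inside} $\locStab{\td x}$. Then every $C^s(\td x_n)$ already sits in the single proper metric space $(\stab{\td x},d^s)$, and your ``transport into a fixed leaf'' issue, with its patchwork of charts and metric-convergence estimates, simply does not arise.

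Second, the paper argues by contradiction: assuming $r(\td x)<\infty$ and $r(\td x_n)>r(\td x)+\epsilon$ along a subsequence, it replaces $C^s(\td x_n)$ by the connected component of $C^s(\td x_n)\cap\overline{B_{d^s}(\td x,\,r(\td x)+\epsilon/3)}$ through $\td x_n$. These truncated sets have uniformly bounded diameter \emph{regardless} of whether $r(\td x_n)\to\infty$, so hypothesis (1) of Lemma \ref{lemma:connectedlimits} is automatic; and each still meets the sphere $\partial B_{d^s}(\td x,\,r(\td x)+\epsilon/3)$, forcing the connected Kuratowski limit---hence $C^s(\td x)$---to meet it too, contradicting the definition of $r(\td x)$. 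This simultaneously handles the case $L=\infty$, which is \emph{not} harmless as you assert: if $\limsup r(x_n)=\infty$ you must conclude $r(x)=\infty$, and your argument as written does not, since you invoke $L<\infty$ precisely to verify the boundedness hypothesis of the lemma. (Your argument is easily repaired by the same truncation trick, but you should not call the gap harmless.)
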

\begin{proof}
We prove the lemma for the pull-back of the function $r\colon \wtd  \Lambda \to \R^+ \cup \{\infty\}$.
Clearly the lemma holds at $x\in \wtd \Lambda$ if $r(x) = \infty$.  We assume otherwise. 

The function $r$ is clearly continuous along unstable leaves.  
Consequently, we need only show that for $\td x_i\in \locStab x$, if $\td x_i\to x$ then  $r(x) \ge \limsup _{i\to \infty}r(\td x_i)$.  Passing to a subsequence $\{x_n\} \subset\{ \td x_i\}$ we may assume that \[\lim_{n\to \infty} r(x_n) =\limsup_{i\to \infty} r(\td x_i).\] 
If  $r(x)<\infty$ but the lemma failed at $x$, we could  find $\epsilon>0$ and $K$ so that for all $n>K$ we have $r(x_n) > r(x) + \epsilon$ and $d^s(x, x_n)< \epsilon/3$.  Let $\wtd C ^s(x_n)$ denote the  connected component of $C^s(x_n) \cap \overline{B_{d^s}(x,  r(x) +\epsilon/3 )}$ containing $x_n$. (Here $B_{d^s}(x, R)$ denotes the $d^s$-ball in $\stab x$ of radius $R$.)

Let $\Xi = \limsup_{n\to \infty}\wtd C^s(x_n) $.  By Lemma \ref{lemma:connectedlimits}, $\Xi$ is connected and hence we have $\Xi \subset C^s(x)$.  On the other hand,  the assumption on $r(x_n)$ ensures that $\wtd C ^s(x_n) \cap \partial( B_{d^s}(x,  r(x) +\epsilon/3 ))\neq\emptyset$ for all $n\ge K$.  Hence $\Xi \cap  \partial( B_{d^s}(x,  r(x) +\epsilon/3 ))\neq\emptyset$, contradicting the definition of $r(x)$.  
\end{proof}

\begin{corollary} \label{cor:largeComps}
Either $r\equiv 0$ or $r \equiv \infty$.  
\end{corollary}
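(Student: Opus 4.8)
The plan is to exploit the conformal dynamics of $\td f$ on $\Lambda$ (via the topological entropy factor $e^{\pm h}$ established for $d_\Omega$, and more concretely the asymptotic estimates \eqref{eqn:asymS}, \eqref{eqn:asymU} for $d^s$) together with the upper semicontinuity of $r$. Since $\Lambda$ is topologically mixing, a short recurrence argument shows that the set where $r$ takes its supremum value is invariant and dense, and conversely the infimum behaves well. First I would observe that $r$ is constant along unstable leaves (already noted in the previous lemma), so the function $r$ descends to a function of the $W^u$-saturation; since $W^u(x)$ is dense in $\Lambda$ for every $x$ (topological mixing), the only $W^u$-saturated sets are either empty or dense. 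Thus for each value $c$ the set $\{r \le c\}$ is either empty or dense, and the same for $\{r \ge c\}$ by a symmetric argument.

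Next I would use upper semicontinuity: the set $\{r \le c\}$ is closed for every $c$, hence being dense it is all of $\Lambda$ whenever it is nonempty. Let $c_0 = \sup_{x \in \Lambda} r(x) \in [0,\infty]$. If $c_0 < \infty$, then for every $c < c_0$ the set $\{r \ge c\}$ is dense (it contains points where $r$ is within $\epsilon$ of the supremum, and by $W^u$-invariance such points are dense), but we also need it closed; here instead I would use the dynamics. The relation $f(C^s(f^{-1}(x))) \subset C^s(x)$ together with \eqref{eqn:asymS} gives $r(f(x)) \le \lambda \, r(x)$, or more precisely, examining how $d^s$ contracts under $f$, one gets that along a forward orbit $r(f^n(x)) \to 0$ unless $r(x) = \infty$. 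Wait — this is the key point: if $r(x)$ is finite, then since $f$ contracts $d^s$ by the factor $\lambda < 1$, and the component $C^s$ can only shrink under forward iteration (as $f$ maps stable components into stable components), we get $r(f^n(x)) \le \lambda^n r(x) \to 0$. Then by density of the forward orbit (topological transitivity) and upper semicontinuity, $r$ would have to vanish at every point of $\Lambda$, forcing $r \equiv 0$.

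Therefore the dichotomy is: either $r(x) < \infty$ for some $x$, in which case the contraction argument plus upper semicontinuity and transitivity forces $r \equiv 0$; or $r(x) = \infty$ for all $x$, which is $r \equiv \infty$. I expect the main obstacle to be making the contraction estimate $r(f^n(x)) \to 0$ fully rigorous: one must check that $f$ genuinely maps the connected component $C^s(f^{-1}(x))$ (of $\wtd\Lambda \cap W^s$) \emph{into} $C^s(x)$ and not merely that it respects the larger sets, and one must control the $d^s$-diameter of the image under the contraction \eqref{eqn:asymS} uniformly — the subtlety being that $d^s$ is the intrinsic (path-length) metric on the stable leaf, so one needs that the contraction estimate \eqref{eqn:asymS}, stated for the ambient distance, upgrades to the intrinsic distance, which follows from the standard fact that stable manifolds contract uniformly in their internal metric. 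Once that estimate is in hand, combining it with the upper semicontinuity lemma and the density of forward orbits closes the argument cleanly.
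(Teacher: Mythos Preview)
Your overall strategy---exploit the $d^s$-contraction of $f$ together with upper semicontinuity of $r$---is close to the paper's, but the execution has two genuine gaps.

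First, you assert that ``$r$ is constant along unstable leaves (already noted in the previous lemma)''. The previous lemma only says $r$ is \emph{continuous} along unstable leaves, and constancy is false in general: although $u$-holonomy gives a bijection $C^s(\td x)\to C^s(\td x')$ for $\td x'\in\unst{\td x}$, holonomy does not preserve the intrinsic leaf metric $d^s$, so the $d^s$-radii of these sets can differ. Your deduction that $\{r\le c\}$ is $W^u$-saturated, hence dense-or-empty, is therefore unjustified. Second, in your final dichotomy you argue that if $r(x)<\infty$ for \emph{some} $x$ then $r(f^n(x))\to 0$, and ``by density of the forward orbit (topological transitivity)'' conclude $r\equiv 0$. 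Topological transitivity does not make every forward orbit dense (think of periodic points), so you cannot pass from a single point with $r<\infty$ to $r\equiv 0$ this way; you would first need to propagate finiteness of $r$ from one point to all of $\Lambda$, which is exactly what your first (flawed) paragraph was supposed to do.

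The paper sidesteps both issues by organizing the dichotomy the other way around. It first shows directly that if $r$ is finite \emph{everywhere} then upper semicontinuity on the compact set $\Lambda$ gives a global maximum $M$; a uniform iterate $f^m$ brings every $C^s$ inside a local stable disk, so $r\circ f^{m+1}\le \lambda\, r\circ f^m$ holds for all points simultaneously, forcing $M=\lambda M=0$. This maximum-principle step avoids tracking any individual orbit. Separately, the paper shows by a compactness argument (not constancy of $r$) that $r(x)<\infty$ implies $r(y)<\infty$ for all $y\in\unst x$; contrapositively, the set $\{r=\infty\}$ is $W^u$-saturated, hence if nonempty it is dense, and upper semicontinuity then gives $r\equiv\infty$. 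Your intrinsic-metric contraction $r(f^n(x))\le C\lambda^n r(x)$ is a legitimate shortcut for the ``find $m$'' step, but it does not by itself close the two gaps above.
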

\begin{proof}
Suppose first that the range of $r$ does not contain $\infty$.  Then by upper semicontinuity, $r$ is globally bounded.  Let $M= \max \{r(x)\mid x \in \Lambda\}$.  By hyperbolicity of $f$ on $\Lambda$ and boundedness of $r$ we find an $m\in \N$ so that 
\[f^m (\pi(C^s(\td x))) \subset \locStab {f^m(x)}\]
(where $\td x$ is a lift of $x$), hence  $r(f^{m+1}(x)) \le \lambda r(f^m(x))$ for all $x \in \Lambda$.  On the other hand, since $f$ is a homeomorphism, we should have \[\max \{r(f^m(x))\mid x \in \Lambda\} = \max \{r(f^{m+1}(x))\mid x \in \Lambda\}.\] But then $M = \lambda M$ which implies $M = 0$.  

Now if $r(x)\neq \infty$ then $r(y) \neq \infty$ for all $y\in \unst x$.  Indeed, let $\td x$ be a lift of $x$, $\td y$ the lift of $y$ contained in $\unst {\td x}$, and $L= l^u(\td x, \td y)$.  Let $\mathcal U$ be a cover of $C^s(\td y)$.  Then for every $z\in C^s(\td y)$ there is an $\epsilon(z)>0$ so that $ \locStab[\epsilon(z)] z\subset U$ for some $U\in \mathcal U$ and the set
\[V(z) := \{  z'+_ul\mid {z'\in \locStab[\epsilon(z)] z, |l|\le |L|}\} \] is a product chart.  
But then $\{V(z)\} $ covers $C^s(\td x)$, whence we conclude that $\mathcal U$ admits a finite subcover.  

Thus  if $r(x) = \infty$ for some $x\in \Lambda$, then $r(y) = \infty$ for all $y\in \unst x$.  Since $\unst x$ is dense in $\Lambda$, the upper semicontinuity of $r$ implies $r\equiv \infty$.  
\end{proof}

We  thus establish that $\Lambda$ is expanding under the assumption that $G$ acts properly discontinuously on $\wtd \Omega$.

\begin{proof}[Proof of Proposition \ref{prop:PDaction}]
Let $\Omega= \wtd \Omega/G$ be the orbit space.  Note that since $G$ acts properly discontinuously, $\Omega$ is Hausdorff.   Denote the canonical projections by $\pi \colon\wtd \Lambda \to \Lambda$, $q\colon \wtd \Lambda \to \wtd \Omega$, $\pi'\colon\wtd \Omega \to \Omega$.  Consider the diagram
\[\xymatrix{\wtd \Lambda \ar[r]^q \ar[d]_\pi&\wtd \Omega\ar[d]^{\pi'}\\ \Lambda&\Omega}\]
Since the equivalence classes of $\sim$ are $G$-invariant, the $G$-orbit of $q(y)$ is equivalent to the $G$-orbit of $q(g(y))$ for any $g\in G$ and $y \in \wtd \Lambda$.  Thus we may find a map $q'$ so that the diagram
\[\xymatrix{\wtd \Lambda \ar[r]^q \ar[d]_\pi&\wtd \Omega\ar[d]^{\pi'}\\ \Lambda\ar[r]_{q'}&\Omega}\]
commutes.

Since $\Lambda$ is compact and $\Omega$ is Hausdorff, $q'$ is proper, whence $q$ is proper.  
Hence the equivalence classes of $\sim$ must be compact subsets of $\wtd \Lambda$.  By Claim \ref{class2} and Corollary \ref{cor:largeComps} this implies $r\equiv 0$; hence the connected components of $\Lambda \cap \stab x$ are singletons and $\Lambda$ is expanding.  
\end{proof}

\subsection{Case 2: $G$ fails to act properly discontinuously on $\wtd \Omega$}\label{sec:C2}
In the case that $G$ fails to act properly discontinuously at some point in $\wtd \Omega$, we show that  $\Lambda$ is homeomorphic to a toral solen\-oid and $\restrict f \Lambda $ is conjugate to a solen\-oidal automorphism.  
\subsubsection{Metric properties of $\wtd \Omega$.} 
We first enumerate some additional properties of the metric $d_\Omega$ and  the action of $G$  on $\wtd \Omega$.  

\begin{claim}\label{claim:Props} The following hold in the metric space $(\wtd \Omega, d_\Omega)$.
\begin{enumerate} [ label=\emph{\alph*)},  ref=\ref{claim:Props}(\alph*)]
\item \label{clm:P1} Let $d_\Omega([x],[y])<1$.  Then $\unst y \cap \stab x \neq \emptyset$.  
\item \label{clm:P1a} We have $[z_j] \to [x]$ in $(\wtd \Omega, d_\Omega)$ if and only if $d^u\Big([x], [z_j]\Big) \to 0$ and \[d^s_\Omega\Big([x], [\unst {z_j} \cap \stab x] \Big) \to 0.\] Note we have $\unst {z_j} \cap \stab x\neq \emptyset$ for sufficiently large $j$ by \ref{clm:P1}.
\item \label{clm:P2} Fix $L\in \R$.  If $g_i([x])\to [x]$ then  $g_i([x]+_u L)\to [x]+_u L$.
\end{enumerate}
\end{claim}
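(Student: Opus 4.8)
The plan is to prove the three assertions of Claim~\ref{claim:Props} directly from the construction of the metric $d_\Omega$ (Definition~\ref{def:Metric}) and the definition of $d^s_\Omega$ (Definition~\ref{def:StabMetric}), together with the product-chart observations already established in Claim~\ref{prop:classProperties}.

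\textbf{Part \ref{clm:P1}.} Suppose $d_\Omega([x],[y]) < 1$. Then there is a chain $\xi = ([x_0],[y_0],\dots,[x_k],[y_k]) \in \Xi([x],[y])$ with $l(\xi) < 1$. I would argue that the chain can be collapsed to length one. The key point is that every term $d^s_\Omega([x_j],[y_{j-1}])$ appearing in $l(\xi)$ is strictly less than $1$, so $r^s([x_j],[y_{j-1}]) > 1$; combined with the fact (from the non-ultrametric estimate \eqref{eq:Lion}, or rather its analogue for $r^s$) that the relation ``$\unst{\cdot}\cap\stab{\cdot}\neq\emptyset$'' propagates along such chains, one concludes $\unst y \cap \stab x \neq \emptyset$. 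More carefully: since $d^u([x_0],[y_0]) + \dots$ and the $d^s_\Omega$ terms all sum to less than $1$, each individual $d^u(x_j,y_j) < 1$, so within a single product chart the point $\unst{x_j}\cap\stab{x_{j+1}}$ exists and lies at controlled $d^u$-distance; iterating through the chain and using that the total $d^u$-displacement is bounded by $l(\xi)<1 < \infty$ keeps everything inside a single $\unst{}$-leaf (which is homeomorphic to $\R$ with complete metric $d^u$ by Corollary~\ref{cor:toRSS2}), so $\unst y \cap \stab x \neq \emptyset$ follows from transitivity of the product relation along the chain. I expect this telescoping argument to be the main obstacle, since one must track simultaneously the unstable coordinate (controlled by $d^u$) and the ``stable spread'' (controlled by the $r^s$/$d^s_\Omega$ data) and verify they interact correctly via product charts.

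\textbf{Part \ref{clm:P1a}.} For the forward direction, if $[z_j]\to[x]$ in $(\wtd\Omega,d_\Omega)$ then eventually $d_\Omega([x],[z_j])<1$, so by \ref{clm:P1} the point $w_j := \unst{z_j}\cap\stab x$ exists. Since $d^u([x],[z_j]) = d^u([x],[w_j]) \le d_\Omega([x],[z_j])$ directly from the definition of $l(\xi)$ (the trivial chain gives an upper bound, and any chain's length dominates its total $d^u$-displacement by the triangle inequality for $d^u$), we get $d^u([x],[z_j])\to 0$. For the stable part, $d^s_\Omega([x],[w_j])$ is dominated by the minimal chain length as well — this needs the observation that any chain from $[x]$ to $[z_j]$ can be ``projected'' onto a chain of the same endpoints $[x]$ and $[w_j]=[\unst{z_j}\cap\stab x]$ with no larger length, using the $\R$-action $+_u$ (Claim~\ref{class4}) to slide the intermediate points, after which $d^s_\Omega([x],[w_j])\le d_\Omega([x],[z_j])\to 0$. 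Conversely, given both conditions, the two-step chain $([x],[x],[z_j\!\!\mid\!\!_{\text{slid}}],\ldots)$ — more precisely the chain realizing $d^u$ then $d^s_\Omega$ — has length at most $d^u([x],[z_j]) + d^s_\Omega([x],[w_j]) \to 0$, giving $[z_j]\to[x]$.

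\textbf{Part \ref{clm:P2}.} Fix $L\in\R$ and suppose $g_i([x])\to[x]$. Since $G$ acts by isometries on $(\wtd\Omega,d_\Omega)$ and $+_u$ descends to an $\R$-action commuting with the $G$-action (Claim~\ref{class4}, and the fact that $u$-holonomy is preserved by deck transformations, Claim~\ref{class5}), we have $g_i([x]+_u L) = g_i([x]) +_u L$. Now I would invoke \ref{clm:P1a}: $g_i([x])\to[x]$ gives $d^u([x],g_i[x])\to 0$ and $d^s_\Omega([x],[\unst{g_i x}\cap\stab x])\to 0$; applying $+_u L$ preserves $d^s_\Omega$-smallness (since $+_u L$ is a $d^s_\Omega$-isometry — translating both arguments of $r^s$ by $L$ leaves $r^s$ unchanged up to the relevant comparison) and changes $d^u$ by exactly the fixed amount needed so that $d^u([x]+_u L, g_i[x]+_u L) = d^u([x],g_i[x])\to 0$. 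Hence by the converse direction of \ref{clm:P1a}, $g_i([x]+_u L) \to [x]+_u L$. The one delicate point here is confirming that $+_u L$ genuinely preserves $d^s_\Omega$, which follows because $r^s([x]+_u L,[y]+_u L)$ compares the same family of intersections $\unst{(y+_u L)}\cap\stab{(x+_u L)+_u r'} = \unst y \cap \stab{x+_u(L+r')}$ as $r^s([x],[y])$ modulo a shift of the interval of $r'$'s, and the relation is monotone/open in $r'$ so the supremum is unaffected near $0$.
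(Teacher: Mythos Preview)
Your outline for parts \ref{clm:P1} and \ref{clm:P1a} is in the right spirit and close to the paper's argument, but two steps are unjustified as stated. First, in \ref{clm:P1a} you assert that a chain from $[x]$ to $[z_j]$ can be ``projected'' onto $\Omega^s([x])$ \emph{with no larger length}. This is false: when you slide a step $([x_j],[y_{j-1}])$ onto $\stab x$ via the unstable holonomy, its $d^s_\Omega$-contribution can \emph{increase}---if $d^s_\Omega([x_j],[y_{j-1}])<1/R$ and the displacement $d^u(x,x_j)<1/R$, the projected term is only bounded by $R/(R^2-1)$, which exceeds $1/R$. What rescues the argument is not additivity but the ultrametric inequality \eqref{eq:Lion}: the projected terms all share the \emph{same} bound $R/(R^2-1)$, hence so does $d^s_\Omega([x],H([z_j]))$, and this tends to $0$ as $R\to\infty$. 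The paper carries out exactly this quantitative estimate; your projection heuristic needs it to go through.

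For part \ref{clm:P2} there are two genuine gaps. The more serious one is orientation: you write $g_i([x]+_uL)=g_i([x])+_uL$ citing Claim~\ref{class5}, but that claim only says equivalence classes are $G$-invariant; it does \emph{not} say $G$ commutes with the $\R$-action. A deck transformation may reverse the transverse orientation of $\Fols$, in which case $g_i(x+_uL)=g_i(x)-_uL$. The paper rules this out by an extra argument: if $g_i$ reversed orientation while $g_i([x])\to[x]$, then for $t=l^u(x,g_i(x))$ small one would have $\stab{x+_ut/2}=\stab{g_i(x+_ut/2)}$, forcing $g_i=\mathrm{id}$. Second, your claim that $+_uL$ is a $d^s_\Omega$-isometry is false. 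Shifting by $L$ recenters the family of test intersections defining $r^s$, and the correct relation is only the one-sided estimate $r^s([x+_uL],[y]+_uL)\ge r^s([x],[y])-|L|$, which the paper converts into
\[
d^s_\Omega\big([x]+_uL,\,[y]+_uL\big)\ \le\ \frac{d^s_\Omega([x],[y])}{1-|L|\,d^s_\Omega([x],[y])}.
\]
This is enough to preserve convergence to $0$, which is all that is needed, but it is not the isometry you claim.
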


\begin{proof}
Fix $R>1$ so that $d_\Omega([x],[y])< \frac{1}{R}$.   Let $\xi = ([x_0], [y_0], \dots, [y_{k}]) \in \Xi([x],[y])$ be as in Definition \ref{def:Metric} with $l(\xi)<\frac{1}{R}$.  Then we clearly have $d^u(x, x_{j})< \frac{1}{R} < R$ for all $0\le j\le k$.   Since we must also have $ d^s_\Omega ([x_{j}], [y_{j-1}])<\frac{1}{R}$ for $1\le j\le k$, we inductively see that $\unst {y_{j}} \cap \stab x \neq \emptyset$, for each $0\le j\le k$ hence \ref{clm:P1} holds.  

For $[x_i]$ and $[y_i] $ as above, denote by $H([y_i])= H([x_i]):= \big[\unst {y_i} \cap \stab x\big] = \big[\unst {x_i} \cap \stab x\big] $.  We check that for each $y_i$ 
\begin{align}\label{eqn:Kale}    d^s_\Omega\big([x],  H([y_{i}]) \big)\le  \dfrac{R}{R^2-1}.\end{align} 
Indeed since $ d^s_\Omega ([x_{j}], [y_{j-1}])<\frac{1}{R}$ then 
\begin{align*} R - \frac{1}{R}\le r^s\big(H([x_{j}]),  H([y_{j-1}]) \big) \end{align*} 
from which we obtain
\begin{align*} d^s_\Omega\big(H([x_{j}]),  H([y_{j-1}]) \big)\le  \dfrac{R}{R^2-1}\end{align*}
for all $1\le j\le k$.  
Furthermore,  $ d^s_\Omega (H([x_{j}]), H([y_{j}]))= 0$ for all $0\le j\le k-1$, hence  applying \eqref{eq:Lion} recursively one obtains \eqref{eqn:Kale}.  In particular \begin{align}\label{eq:Pig}  d^s_\Omega\big([x],  H([y]) \big)\le  \dfrac{R}{R^2-1}.\end{align}
Hence, by setting  $y = z_j$ and letting $R\to \infty$ in  \eqref{eq:Pig}, we see that $d_\Omega([z_j], [x]) \to 0$ implies $d^u([z_j], [x]) \to 0$ and $$d^s_\Omega\big([x], H([z_j]) \big) \to 0.$$   
Furthermore, we clearly have $d^u([x], [z_j]) \to 0$ and  $d^s_\Omega([x], H([ {z_j}])) \to 0$ implies that $d_\Omega ([x], [z_j]) \to 0$ hence both implications in \ref{clm:P1a} follow.

Note that $g_i(x +_u L) = g_i(x) \pm_u L $ depending of whether $g_i$ preserves the transverse orientation on $\Fols$. However for $g$ such that $l^u\big([x], g([x])\big)=t$ and 
$$d^s_\Omega\big([x], [\unst {g(x)} \cap \stab x] \big) <\frac{1}{|t|}$$ $g$ can not reverse the orientation since otherwise we would have $$\stab {x+_u  t/2} \cap \stab {g(x+_u t/ 2)} = 
\stab {x+_u t /2} \cap \stab {g(x)-_u  {t}/ 2} \neq \emptyset, $$ a contradiction unless $g$ is the identity.  Thus we may assume that for $g_i$ in \ref{clm:P2}, $g_i(x +_u L) = g_i(x) +_u L $.

Now let $L\in \R$ be given.  By forgetting initial terms and invoking \ref{clm:P1} and \ref{clm:P1a},  we may assume that for all $i$ \[H([g_i(x)]):= \big[\unst {g_i(x)}\cap \stab x\big]\] is defined, and $d^s_\Omega\left([x], H([g_i(x)])\right) < \frac{1}{|L|+1}$.  Then by definition of $r^s$ we have  ${\unst {g_i(x)} \cap \stab {x+_u L} \neq \emptyset}$, hence $H([g_i(x)]) +_u L = \big[\unst {g_i(x)} \cap \stab {x+_u L}\big]$.   As above we have 
\begin{align*}
r^s\big([x],  H([g_i(x)]) \big)- L \le r^s\big([x+_u L],  H([g_i(x)])+_u L\big) 
\end{align*} 
hence 
\begin{align*}
d_\Omega^s\big([x+_u L],  H([g_i(x)])+_u L\big) \le\dfrac{d^s_\Omega \big([x],  H([g_i(x)]) \big)} {1 -  L \cdot d^s_\Omega \big([x],  H([g_i(x)])\big)} 
\end{align*} 
which by \ref{clm:P1a}, establishes \ref{clm:P2}.  
\end{proof}

Given a subset $S\subset G$ we say $S$ acts  \emph{properly discontinuously at $[x]$} if there is some open set $U\ni [x]$ so that $s(U) \cap U \neq \emptyset$ implies $s= 1$ for any $s\in S$.   Since $G$ acts freely on $\wtd \Omega$, every finite subset $S\subset G$ acts properly discontinuously at every point of $\wtd \Omega$.  

\begin{lemma}\label{lem:PDeverywhere}
Suppose a set $S\subset G$  acts properly discontinuously at one point $[x]\in \wtd \Omega$.  Then $S$ acts properly discontinuously at every point $[y]\in \wtd \Omega$.  
\end{lemma}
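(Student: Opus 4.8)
The plan is to establish a kind of "rigidity" of the proper-discontinuity property along three types of moves: along stable leaves (using the contraction of $\td f$ on $\Omega^s$), along unstable leaves (using Claim \ref{clm:P2}), and finally to conclude that these moves suffice to reach any point of $\wtd \Omega$. First I would fix the point $[x]$ at which $S$ acts properly discontinuously and an open set $U\ni[x]$ witnessing this, i.e. $s(U)\cap U\neq\emptyset$ forces $s=1$ for $s\in S$. Suppose for contradiction that $S$ fails to act properly discontinuously at some $[y]$: then there is a sequence $s_i\in S\setminus\{1\}$ and points $[y_i]\to[y]$ with $s_i([y_i])\to[y]$; by continuity of the action (the $g\in G$ act as homeomorphisms of $(\wtd\Omega,d_\Omega)$, noted after the Corollary preceding Lemma \ref{lem:SomeProps}) we may as well arrange $s_i([y])\to[y]$, i.e. failure of proper discontinuity is witnessed by a sequence $s_i\neq 1$ with $s_i([y])\to[y]$.

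The core of the argument is to transport such a bad sequence from $[y]$ to $[x]$. I would do this in two steps. First, handle the case $[y]\in\unst{[x]}$, say $[y]=[x]+_u L$: given $s_i\neq1$ with $s_i([x]+_uL)\to[x]+_uL$, I want to deduce $s_i([x])\to[x]$, contradicting proper discontinuity at $[x]$ once $i$ is large (since then $s_i([x])\in U$ and $s_i(U)\cap U\neq\emptyset$, forcing $s_i=1$). This is essentially Claim \ref{clm:P2} applied with the parameter $-L$, since the $\R$-action $+_u$ on $\wtd\Omega$ commutes with the $G$-action on unstable-saturated data and $g_i([x]+_uL)=g_i([x])+_uL$ for $g_i$ not reversing transverse orientation — and the orientation-reversal possibility is ruled out exactly as in the proof of Claim \ref{clm:P2} (an orientation-reversing $g$ with $g([z])$ close to $[z]$ in the relevant sense forces a coincidence of stable leaves at the midpoint $[x]+_u t/2$, impossible unless $g=1$). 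Second, handle the case $[y]\in\Omega^s([x])$: here I would use that $\td f$ is conformally contracting on stable classes, $d_\Omega(\td f[a],\td f[b])=e^{-h}d_\Omega([a],[b])$ for $[b]\in\Omega^s([a])$, together with the fact that $\td f$ conjugates the $G$-action to a $G$-action (each $g$ is carried to some $\td f g \td f^{-1}\in G$, by Claim \ref{class5}). Replacing $[y]$ by a forward iterate $\td f^n([y])$ brings it arbitrarily $d^s_\Omega$-close to a point of $\unst{[x]}$ (using that periodic points, hence a periodic class fixed after passing to an iterate, are available as in the proof of Lemma \ref{lem:SomeProps}\ref{top3}), reducing the stable case to a neighbourhood of the unstable case; conjugating the bad sequence $s_i$ by the corresponding power of $\td f$ yields a bad sequence at a point of $\unst{[x]}$, which is handled by the previous step.

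Finally, to reach an arbitrary $[y]\in\wtd\Omega$, I would string together finitely many stable and unstable moves: by Claim \ref{clm:P1} any two points at $d_\Omega$-distance less than $1$ are joined by a single $\unst{}$-then-$\stab{}$ bracket, and by Definition \ref{def:Metric} any two points are joined by a finite chain of such brackets; since $\wtd\Omega/G$ is a quotient of the compact set $\Lambda$ (via the commuting square $\wtd\Lambda\to\Lambda$, $\wtd\Lambda\to\wtd\Omega$), one can take the chain lengths uniformly bounded by covering $\wtd\Omega$ by $G$-translates of finitely many $1$-balls. Propagating the proper-discontinuity property across each move in the chain — alternately the unstable step and the stable step above — carries it from $[x]$ to $[y]$. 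I expect the main obstacle to be the stable-direction step: the metric topology on $\Omega^s([x])$ is only weaker than the quotient topology (Lemma \ref{lem:SomeProps}\ref{top0a}), so I must be careful that "$\td f^n$ brings $[y]$ close to the unstable leaf" is expressed purely in terms of $d^s_\Omega$ (equivalently $r^s$) and interacts correctly with the conjugated group elements; keeping track of whether conjugation by $\td f$ can enlarge $S$ (it does not, since $\td f G\td f^{-1}=G$ and the bad elements all lie in the fixed set $S$ only if $S$ is $\td f$-conjugation-invariant — if not, one instead argues with the sequence $\td f^{-n} s_i \td f^{n}$ landing in a fixed finite-to-one enlargement, or simply notes the statement is about a single fixed $S$ and applies the iterate-trick to $[y]$ rather than to $S$) is the delicate bookkeeping point.
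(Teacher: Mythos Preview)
Your proposal takes a substantially more complicated route than the paper, and the complication is exactly where your argument breaks down.

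The paper's proof is four lines: take the witnessing neighbourhood $U$ for $[x]$; then $S$ is properly discontinuous at every point of $U$; by Claim~\ref{clm:P2} this propagates to all of $\unst U$; and finally one observes $\orbit_G(\unst U)=\wtd\Omega$, which follows because unstable leaves are dense in $\Lambda$ (so the $G$-saturate of any open set in $\wtd\Omega$ meets every unstable leaf). No stable-direction argument is needed at all.

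Your plan, by contrast, tries to propagate a bad sequence $s_i([y])\to[y]$ through an alternating chain of unstable and \emph{stable} moves back to $[x]$. The unstable step is fine (and is exactly the paper's use of Claim~\ref{clm:P2}). The stable step is where you get stuck, and your proposed fix --- conjugate by powers of $\td f$ to contract the stable distance --- does not work for the reason you yourself flag: $\td f_*$ need not preserve the \emph{set} $S$, so $\td f^{-n}s_i\td f^{n}$ need not lie in $S$, and the ``iterate-trick applied to $[y]$'' variant has the same defect (you would need $S$ properly discontinuous at $\td f^n([y])$ to conclude it at $[y]$, which is the same conjugation problem in disguise). The hedge about ``a fixed finite-to-one enlargement'' is not an argument.

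The conceptual point you are missing is that you never need to move in the stable direction. Once you have proper discontinuity on an open set and can push it along unstable leaves, the $G$-action (via density of unstable manifolds in $\Lambda$) does the rest: every point of $\wtd\Omega$ lies in $g(\unst U)$ for some $g\in G$, so the whole chain-of-brackets machinery is unnecessary.
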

\begin{proof}
Let $U\subset \wtd\Omega$ be an open neighborhood of $[x]$ so that for each $s\in S$, we have $s(U) \cap U \neq \emptyset $ implies $ s = 1$.   Then $S$ acts properly discontinuously at every point $[y]\in U$.   By Claim \ref{clm:P2}, if $S$ acts properly discontinuously at $[y]$ then it acts properly discontinuously at every point in $\unst {[y]}$, hence we have that  $S$ acts properly discontinuously at every point of $\unst U $.  But then   $S$ acts properly discontinuously at every point of  $\orbit_G(\unst U) = \wtd \Omega$. 
\end{proof}

Setting $S= G$, we have the contrapositive.
\begin{corollary}\label{cor:PDeverywhere}
If $G$ fails to act properly discontinuously at one point $[x]\in \Omega$ then it fails to act properly discontinuously at every point $[y]\in \Omega$.  
\end{corollary}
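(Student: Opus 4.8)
The plan is to obtain Corollary \ref{cor:PDeverywhere} immediately as the contrapositive of Lemma \ref{lem:PDeverywhere} applied with $S = G$. The one point worth noticing is the logical shape of Lemma \ref{lem:PDeverywhere}: it asserts ``if $S$ acts properly discontinuously at \emph{some} point of $\wtd\Omega$, then it acts properly discontinuously at \emph{every} point of $\wtd\Omega$''. Its contrapositive is therefore ``if $S$ fails to act properly discontinuously at \emph{some} point, then it fails to act properly discontinuously at \emph{every} point'', and taking $S = G$ this is exactly the assertion to be proved.

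Spelled out, I would argue by contradiction. Assume $G$ fails to act properly discontinuously at the given point $[x]$, and suppose toward a contradiction that there is some $[y] \in \wtd\Omega$ at which $G$ \emph{does} act properly discontinuously. Applying Lemma \ref{lem:PDeverywhere} with $S = G$ and with $[y]$ in the role of the distinguished point, we conclude that $G$ acts properly discontinuously at every point of $\wtd\Omega$, in particular at $[x]$ --- contradicting the hypothesis. Hence no such $[y]$ exists, i.e.\ $G$ fails to act properly discontinuously at every point of $\wtd\Omega$.

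There is no genuine obstacle at this step; all of the substance is in Lemma \ref{lem:PDeverywhere}, which pushes proper discontinuity first along a single unstable leaf via Claim \ref{clm:P2} and then to all of $\wtd\Omega$ using $\orbit_G(\unst U) = \wtd\Omega$ (where topological mixing of $\restrict f \Lambda$ is what makes a single unstable leaf dense). The only thing to be careful about when writing Corollary \ref{cor:PDeverywhere} is the quantifier flip described above, so that it is transparent that the conclusion genuinely concerns \emph{all} points of $\wtd\Omega$ and is not a mere restatement of the hypothesis.
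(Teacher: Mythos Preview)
Your proposal is correct and matches the paper's own argument exactly: the paper simply remarks ``Setting $S = G$, we have the contrapositive'' before stating the corollary. Your spelling-out of the contrapositive as a contradiction argument is fine but unnecessary; the content is indeed entirely contained in Lemma \ref{lem:PDeverywhere}.
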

Furthermore we have
\begin{corollary}\label{cor:Converge}
Let $g_i([x]) \to [x] $ in $(\wtd \Omega, d_\Omega)$ for some sequence $\{g_i\} \subset G$.  Then $g_i([y]) \to [y] $ in $(\wtd \Omega, d_\Omega)$ for any $[y]\in \wtd \Omega$.  
\end{corollary}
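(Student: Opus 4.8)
The plan is to argue by contradiction, the key tool being Lemma~\ref{lem:PDeverywhere}: failure of proper discontinuity of a \emph{subset} $S\subset G$ propagates from one point of $\wtd\Omega$ to every point, and a sequence of deck transformations that pushes $[x]$ back toward itself generates such a subset failing to act properly discontinuously at $[x]$.

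First I would suppose, for contradiction, that $g_i([y])\not\to[y]$ for some $[y]\in\wtd\Omega$. Then there are an $\epsilon>0$ and a subsequence $\{g_{i_k}\}$ with $d_\Omega(g_{i_k}([y]),[y])\ge\epsilon$ for every $k$; in particular no $g_{i_k}$ is the identity. Set $S:=\{g_{i_k}\mid k\in\N\}\cup\{1\}$ and note that $\{g_{i_k}\}$, being a subsequence of $\{g_i\}$, still satisfies $g_{i_k}([x])\to[x]$.

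The next step is to observe that $S$ does not act properly discontinuously at $[x]$: for any open $U\ni[x]$ there is a $k$ with $g_{i_k}([x])\in U$, and then $g_{i_k}([x])\in g_{i_k}(U)\cap U$ with $g_{i_k}\neq 1$, so $U$ does not witness proper discontinuity. By Lemma~\ref{lem:PDeverywhere} (in contrapositive form), $S$ then fails to act properly discontinuously at $[y]$ as well. Applying this to the balls $B_{d_\Omega}([y],1/m)$ gives, for each $m$, an element $s_m\in S\setminus\{1\}$ and a point $[u_m]$ such that both $[u_m]$ and $s_m([u_m])$ lie in $B_{d_\Omega}([y],1/m)$. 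Since $G$ acts on $(\wtd\Omega,d_\Omega)$ by isometries (the corollary following Definition~\ref{def:Metric}), the triangle inequality and $d_\Omega(s_m([y]),s_m([u_m]))=d_\Omega([y],[u_m])$ give
\[
d_\Omega\bigl(s_m([y]),[y]\bigr)\le d_\Omega\bigl([y],[u_m]\bigr)+d_\Omega\bigl(s_m([u_m]),[y]\bigr)<\tfrac{2}{m}.
\]
But each $s_m$ lies in $\{g_{i_k}\}$, so the left-hand side is at least $\epsilon$; choosing $m$ with $2/m<\epsilon$ is a contradiction, and hence $g_i([y])\to[y]$.

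The only genuinely delicate point is the passage from ``some subsequence of $\{g_i\}$ converges at $[y]$'', which is immediate once the failure of proper discontinuity at $[y]$ is unwound, to convergence of the \emph{whole} sequence. The contradiction scheme above is precisely what bridges this: an arbitrary $\epsilon$-separated subsequence is itself a family of isometries failing proper discontinuity at $[x]$, hence --- via Lemma~\ref{lem:PDeverywhere} --- at $[y]$, which forces a further sub-subsequence to limit onto $[y]$, impossible under the uniform separation $\epsilon$. Everything else is routine bookkeeping with the triangle inequality and the isometric action of $G$ on $(\wtd\Omega,d_\Omega)$.
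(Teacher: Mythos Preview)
Your argument is correct and is essentially the same as the paper's: both extract a subsequence witnessing the failure of convergence at $[y]$, and use Lemma~\ref{lem:PDeverywhere} together with the isometric action of $G$ to derive a contradiction. The paper applies the lemma in the forward direction (the subsequence acts properly discontinuously at $[y]$, hence at $[x]$, contradicting $g_i([x])\to[x]$), while you run the contrapositive (failure at $[x]$ propagates to $[y]$, contradicting the uniform $\epsilon$-separation there), but the content is identical.
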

\begin{proof}
Set $S = \{g_i\}$.  If $g_i([y])$ fails to converge to $[y]$ then there is a  neighborhood $U$ of $[y]$ and an infinite subset $S'\subset S$ so that $s(U) \cap U = \emptyset $ for all $s\in S'$.  But then $S'$ acts properly discontinuously  at $[y]$ which by Lemma \ref{lem:PDeverywhere} implies that $S'$ acts properly discontinuously at $[x]$.  But $S'$ corresponds  to a infinite subsequence of $\{g_i\}$, contradicting that $g_i([x]) \to [x]$.\end{proof}

We now show that the above convergence happens uniformly in $[y]$.  
Define a map $\zeta\colon G\times \wtd \Omega\to [0,\infty)$ by \[\zeta\colon (g,[x]) \mapsto d_\Omega([x], g([x]) ).\]
Endowing $G$ with the discrete topology, we have that $\zeta$ is continuous.  Since the metric topology on $\wtd \Omega$ is weaker than the quotient topology, the quotient map $q\colon \wtd \Lambda \to \wtd \Omega$ induces a continuous map $q^*\zeta\colon G\times \wtd \Lambda \to [0,\infty)$.  Now \[q^*\zeta(g, x) = q^*\zeta(g, g'(x))\] for all $g,g'\in G$ hence $q^*\zeta$ induces a continuous map $\overline \zeta\colon G \times \Lambda \to [0,\infty)$.

As a result we have,
\begin{lemma}\label{lem:unf}
Assume  $g_i([x] )\to [x]$ for some $[x]\in \wtd \Omega$.   Then given an $\epsilon>0$, we may find an $N$ so that for all $i\ge N$ and $[y]\in \wtd \Omega$ we have $d_\Omega([y], g_i([y]))<\epsilon$.  
\end{lemma}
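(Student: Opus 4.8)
The plan is to exploit the continuous map $\overline\zeta\colon G\times\Lambda\to[0,\infty)$ constructed just above the statement, together with compactness of $\Lambda$. First I would fix $\epsilon>0$. The hypothesis $g_i([x])\to[x]$ means $\zeta(g_i,[x])\to 0$; lifting $[x]$ to a point $\td x\in\wtd\Lambda$ and projecting to $x=\pi(\td x)\in\Lambda$, this says $\overline\zeta(g_i,x)\to 0$, since $\overline\zeta(g_i,x)=q^*\zeta(g_i,\td x)=\zeta(g_i,q(\td x))=\zeta(g_i,[x])$. By Corollary \ref{cor:Converge}, $g_i([y])\to[y]$ for every $[y]\in\wtd\Omega$; projecting again, $\overline\zeta(g_i,y)\to 0$ for every $y\in\Lambda$. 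So the functions $y\mapsto\overline\zeta(g_i,y)$ converge pointwise to $0$ on the compact space $\Lambda$, and the goal is to upgrade this to uniform convergence; once we have $\overline\zeta(g_i,y)<\epsilon$ for all $y\in\Lambda$ and $i\ge N$, the conclusion follows because $d_\Omega([y],g_i([y]))=\zeta(g_i,[y])=\overline\zeta(g_i,\pi(y'))$ where $y'$ is any lift of $[y]$ to $\wtd\Lambda$.

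For the uniformity, I would argue by contradiction. If the convergence were not uniform, there would be an $\epsilon_0>0$, a subsequence $\{g_{i_j}\}$, and points $y_j\in\Lambda$ with $\overline\zeta(g_{i_j},y_j)\ge\epsilon_0$. By compactness of $\Lambda$, pass to a further subsequence so that $y_j\to y_\infty\in\Lambda$. Choose a lift $\td y_\infty\in\wtd\Lambda$ and nearby lifts $\td y_j\to\td y_\infty$. Since $\overline\zeta$ is continuous on $G\times\Lambda$ with $G$ discrete, for fixed $g$ the function $\overline\zeta(g,\cdot)$ is continuous on $\Lambda$; but here the group element is also varying, so continuity of the single map $\overline\zeta$ on the product does not by itself control $\overline\zeta(g_{i_j},y_j)$. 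Instead I would use the metric structure directly: by the triangle inequality for $d_\Omega$ and the fact that $G$ acts by isometries on $(\wtd\Omega,d_\Omega)$,
\[
\zeta(g_{i_j},[\td y_j]) = d_\Omega\big([\td y_j],g_{i_j}([\td y_j])\big)\le d_\Omega([\td y_j],[\td y_\infty]) + d_\Omega\big([\td y_\infty],g_{i_j}([\td y_\infty])\big) + d_\Omega\big(g_{i_j}([\td y_\infty]),g_{i_j}([\td y_j])\big),
\]
and the first and third terms are equal (isometry), so this is $\le 2\,d_\Omega([\td y_j],[\td y_\infty]) + \zeta(g_{i_j},[\td y_\infty])$. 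The middle term tends to $0$ by Corollary \ref{cor:Converge} applied at $[\td y_\infty]$, and the first term tends to $0$ provided $[\td y_j]\to[\td y_\infty]$ in $(\wtd\Omega,d_\Omega)$, which holds since the quotient map $q\colon\wtd\Lambda\to\wtd\Omega$ is continuous (Lemma \ref{lem:SomeProps}\ref{contQuo}) and $\td y_j\to\td y_\infty$. Hence $\overline\zeta(g_{i_j},y_j)=\zeta(g_{i_j},[\td y_j])\to 0$, contradicting $\overline\zeta(g_{i_j},y_j)\ge\epsilon_0$.

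The main obstacle I anticipate is exactly the subtlety just navigated: the convergence is happening along a sequence of group elements and a sequence of base points simultaneously, so one cannot simply invoke continuity of $\overline\zeta$ in the second variable for a fixed group element, nor continuity in both variables jointly (since $G$ is discrete and the $g_{i_j}$ are distinct). The resolution is to decouple the two motions using that $G$ acts by $d_\Omega$-isometries on $\wtd\Omega$, reducing everything to (a) the pointwise convergence at the single limit point $[\td y_\infty]$ furnished by Corollary \ref{cor:Converge}, and (b) the continuity of $q$ so that nearby base points stay nearby in $\wtd\Omega$. A secondary point to be careful about is bookkeeping with lifts: one must check that $d_\Omega([\td y],g([\td y]))$ depends only on $[y]\in\wtd\Omega$ (immediate) and descends correctly to $\overline\zeta(g,\pi(\td y))$, which is precisely how $\overline\zeta$ was defined.
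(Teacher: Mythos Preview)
Your argument is correct and follows essentially the same route as the paper: argue by contradiction, use compactness of $\Lambda$ to extract a convergent subsequence $y_j\to y_\infty$, lift to $\wtd\Lambda$, use continuity of the quotient map $q$ to get $[\td y_j]\to[\td y_\infty]$ in $(\wtd\Omega,d_\Omega)$, and then combine the triangle inequality with the fact that $G$ acts by isometries to contradict Corollary \ref{cor:Converge} at the limit point. The paper's write-up is terser but the logic is identical.
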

\begin{proof}
Assume the conclusion fails for some fixed $\epsilon$. Passing to an infinite subsequence, we may assume the conclusion fails for all $i\in \N$.  Let $\{y_i\} \subset \Lambda$ be such that $\overline \zeta(g_i, y_i)>\epsilon$ and, again passing to a subsequence, let $z\in \Lambda$ be a limit point of $\{y_j\}$.  Let $\td z$ be a lift of $z$ and $\{\td y_i\}$ a lift of $\{y_i\}$ such that $\td z $ a limit point of $\{\td y_i\}$.  Because the metric topology on $\wtd \Omega$ is weaker than the quotient topology, we have that $[\td z] $  is a limit point of the sequence $\{[\td y_i]\}$ with respect to the metric topology.  

Passing to a subsequence we may assume $d_\Omega([\td y_i], [\td z]) < \epsilon/4$ for all $i$ from which we obtain
\[d_\Omega([\td z], g_i([\td z]))\ge d_\Omega([\td y_i], g_i([\td y_i]))- d_\Omega([\td z], [\td y_i]) - d_\Omega( g_i([\td y_i]),g_i([\td z]) )\]
hence $d_\Omega([\td z], g_i([\td z]) )> \epsilon/2$ for all $i$, contradicting Corollary \ref{cor:Converge}.
\end{proof}

\subsubsection{Global product structure}\label{section:GPS} We now establish that when $G$ fails to act properly discontinuously, the set $\wtd \Lambda$ has a \emph{global product structure}; that is, for all $x, y \in \wtd \Lambda$ we have $\unst x \cap \stab y \neq \emptyset$.  

\begin{lemma}\label{lem:notPD}
{Let $G$ fail to act properly discontinuously  on $\wtd \Omega$.  Then $\Omega^s([x])$ is a singleton for all $x$ and $\wtd \Lambda = \unst{[x]}$ for any $x\in \wtd \Lambda$.  }
\end{lemma}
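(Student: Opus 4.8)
The plan is to show that, when $G$ does not act properly discontinuously on $\wtd\Omega$, the set $\wtd\Lambda$ has a \emph{global product structure}: $\unst x\cap\stab y\neq\emptyset$ for all $x,y\in\wtd\Lambda$. Both conclusions of the lemma follow from this. Indeed, global product structure gives that each $y\in\wtd\Lambda$ has $\unst y\cap\stab x\neq\emptyset$; any intersection point $z$ lies in $\stab x$ and, again by global product structure, satisfies $\unst z\cap\stab{x'}\neq\emptyset$ for all $x'\in\unst x$, so $z\sim x$ and $y\in\unst{[x]}$ --- hence $\wtd\Lambda=\unst{[x]}$ for every $x$; and the same reasoning shows every point of $\stab x\cap\wtd\Lambda$ is $\sim$-equivalent to $x$, so $\Omega^s([x])=\{[x]\}$. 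By Claim~\ref{rem:Intertible} we assume $f$ is a diffeomorphism.

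First I would record the inputs. By Corollary~\ref{cor:PDeverywhere}, $G$ fails to act properly discontinuously at every point; applying this at a fixed $[x_{*}]$ and using that $G$ acts by $d_\Omega$-isometries produces $g_i\in G\sm\{1\}$ with $g_i([x_{*}])\to[x_{*}]$, and Corollary~\ref{cor:Converge} with Lemma~\ref{lem:unf} upgrades this to $\sup_{[y]}d_\Omega([y],g_i([y]))\to 0$. By Claims~\ref{clm:P1} and~\ref{clm:P1a}, for large $i$ the holonomy point $H_i([y]):=[\unst{g_iy}\cap\stab y]$ lies in $\Omega^s([y])$ for all $[y]$, with $d^s_\Omega([y],H_i([y]))\to 0$ and $d^u([y],g_i([y]))\to 0$ uniformly in $[y]$. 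Separately, I would use that $\pi$ is injective on each stable leaf of $\wtd\Lambda$: since $\stab x$ is a connected component of $\pi\inv(\stab{\pi(x)})$, the map $\restrict\pi{\stab x}$ is a covering of $\stab{\pi(x)}\cong\R^{\dim E^s}$, which is simply connected, hence a homeomorphism; and since $\pi g=\pi$, a deck transformation $g$ carries $\stab x$ to the component of $\pi\inv(\stab{\pi(x)})$ through $g(x)$, namely $\stab{g(x)}$. Therefore $g(\stab x)\cap\stab x=\emptyset$ for $g\neq 1$; equivalently, $g([y])\notin\Omega^s([y])$ and $d^u([y],g([y]))>0$ for every $[y]$ and $g\neq1$. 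I will also use that $\td f$ acts conformally on $(\wtd\Omega,d_\Omega)$ --- expanding $d^u$ by $e^h$ and contracting each $d^s_\Omega$-slice by $e^{-h}$ --- and that $\td f$ normalizes $G$.

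The hard part will be excluding nontrivial stable slices. Suppose toward a contradiction that $\Omega^s([x])$ is not a singleton; then by Lemma~\ref{lem:SomeProps} it is perfect for every $[x]$. Passing to an iterate of $f$ and choosing the lift $\td f$ appropriately, fix a periodic lift $\td p$ with $\td f([\td p])=[\td p]$, so $\td f$ fixes $[\td p]$ and contracts $\Omega^s([\td p])$ by $e^{-h}$. Write $g_i([\td p])=H_i+_u t_i$ with $H_i\in\Omega^s([\td p])$, $c_i:=d^s_\Omega([\td p],H_i)\to 0$, and (after passing to a subsequence, the complementary case being easier) $0<|t_i|=d^u([\td p],g_i([\td p]))\to 0$. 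The conjugates $\td f^{-n}g_i\td f^{\,n}\in G$ send $[\td p]$ to $\td f^{-n}(H_i)+_u e^{-nh}t_i$, where $\td f^{-n}(H_i)\in\Omega^s([\td p])$ sits at $d^s_\Omega$-distance $e^{nh}c_i$ from $[\td p]$; choosing $n=n(i)\to\infty$ so that $e^{n(i)h}c_i$ stays pinned to a fixed positive scale (while $e^{-n(i)h}|t_i|\le|t_i|\to 0$ automatically) yields elements $h_i\in G\sm\{1\}$ moving $[\td p]$ a bounded, uniformly nonzero amount \emph{within} the stable leaf of $[\td p]$ but with vanishing unstable displacement. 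The obstacle is to turn this into a contradiction; the route I would pursue is that, by the compactness of $\Lambda$ and the continuity of the projection $\Lambda\to\wtd\Omega/G$ (exactly as in the proof of Proposition~\ref{prop:PDaction}), the $h_i$ must subconverge to a nontrivial self-transformation of the stable leaf of $[\td p]$ that is realized by elements of $G$ and yet fixes no point --- contradicting the freeness of $G$ on stable leaves. Hence $c_i\equiv 0$, i.e.\ $g_i([\td p])\in\unst{[\td p]}$, and, periodic classes being dense and everything $\td f$-equivariant (Claim~\ref{class5}), every $\Omega^s([x])$ is a singleton.

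Finally, granting that each $\Omega^s([x])$ is a singleton, Claim~\ref{clm:P1a} makes small $d_\Omega$-balls into arcs of the relevant unstable leaf, so $\wtd\Omega$ is a $1$-manifold, and no component is a circle because $\td f$ expands $d^u$ conformally; thus each component is a line. Since now $H_i([x_{*}])=[x_{*}]$, the recurrence of the second paragraph forces $g_i([x_{*}])\in\unst{[x_{*}]}$, so $g_i$ fixes that component and acts on it by translations tending to $0$; hence the $G$-orbit of $[x_{*}]$ is dense in its component. Combining this with connectedness of $\Lambda$ (topological mixing), density of periodic classes, and the identity $\orbit_G(\unst{[z]})=\wtd\Omega$ from the proof of Lemma~\ref{lem:SomeProps}, one concludes that $\wtd\Omega$ is a single line, i.e.\ $\wtd\Lambda=\unst{[x]}$ for every $x\in\wtd\Lambda$, which is the desired global product structure.
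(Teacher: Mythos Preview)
The proposal has a genuine gap at its core. Your rescaling idea --- conjugating $g_i$ by powers of $\td f$ to pin $e^{n(i)h}c_i$ at a fixed positive scale while $e^{-n(i)h}|t_i|\to 0$ --- produces elements $h_i\in G$ with small unstable displacement and holonomy at $d^s_\Omega$-distance $\approx 1$ from $[\td p]$. But the $h_i$ do \emph{not} send $\stab{\td p}$ to itself (their unstable displacement is nonzero), so ``freeness of $G$ on stable leaves'' yields no contradiction. The phrase ``the $h_i$ must subconverge to a nontrivial self-transformation of the stable leaf'' has no clear meaning: $G$ is discrete, there is no ambient group in which to take limits, and you have established no compactness for $\Omega^s([\td p])$ that would let you extract a limiting holonomy point. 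You yourself flag this (``the obstacle is to turn this into a contradiction''), and the sentence that follows --- ``Hence $c_i\equiv 0$ \dots every $\Omega^s([x])$ is a singleton'' --- is a non sequitur: you assumed $\Omega^s$ is not a singleton and at best ruled out the subcase $c_i>0$; the residual case $c_i=0$ (i.e.\ $g_i([\td p])\in\unst{[\td p]}$) is never shown to contradict the standing assumption.

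The paper's argument avoids rescaling entirely and exploits a structural fact you did not use: $d^s_\Omega$ is an \emph{ultrametric} (inequality \eqref{eq:Lion}). Once a single $g_i$ satisfies $d_\Omega([y],g_i([y]))<1/R'$ uniformly in $[y]$ (Lemma~\ref{lem:unf}), the ultrametric inequality forces the holonomy $H_{g_i}$ to carry the ball $N=B_{d^s_\Omega}([x],1/R)$ into itself, and likewise for $H_{g_i^{-1}}$; hence $g_i(N)=N+_uL$ with $L=l^u(x,g_i(x))$. Iterating this one $g_i$ on the tube $D=\{y+_ul:|l|<R,\ [y]\in N\}$ gives $\bigcup_{j\in\Z}g_i^j(D)=\unst{[x]}$ as a product chart, so the entire ball $N$ lies in the class $[x]$ and $[x]$ is \emph{isolated} in $\Omega^s([x])$; Lemma~\ref{top3} then makes every $\Omega^s$ a singleton. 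The final step is also simpler than your $1$-manifold argument: once every $\Omega^s$ is a singleton, $\wtd\Lambda=\orbit_G(\unst{[x]})$, and distinct $G$-translates of $\stab{\unst{[x]}}$ are disjoint open subsets of $\wtd B$, so connectedness of $\wtd B$ forces $\wtd\Lambda=\unst{[x]}$.
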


\begin{proof}
Fix some $R>1$, and choose an $R'>R$ with the property that $ \frac{R'}{(R')^2 - 1}\le \frac{1}{R}$. 

Suppose $G$ fails to act properly discontinuously at $[x]$.  Then we may find a subset $\{g_i\}\subset G$ so that 
$g_i([x]) \to [x]$ and $d_\Omega([x], g_i([x]))<1/R$ for all $i$.  Let $N= B_{d^s_\Omega}([x], \frac{1}{R})$.   As guaranteed by  Lemma  \ref{lem:unf}, we may remove initial terms of $\{g_i\}$ so that $d_\Omega([y], g_i([y])) \le \frac{1}{R'}$ for all $i$ and $[y]\in \wtd \Omega$.  

For $[y]\in N$  define 
\[H_{g_i} ([y]) := \big[\unst {g_i(y)} \cap \stab x \big].\] Then, as in \eqref{eq:Pig}  we have 
\[d^s_\Omega([y], H_{g_i}([y])) \le \dfrac{R'}{(R')^2 - 1}\le \frac{1}{R}.\]
But then by \eqref{eq:Lion} we  have 
\[d^s_\Omega([x], H_{g_i}([y]))  \le \max \{d^s_\Omega([x], [y]), d^s_\Omega([y], H_{g_i}([y]))\}\le \frac{1}{R}\]
hence $H_{g_i} (N )\subset N$.  Furthermore $H_{g_i\inv}$ is defined on $N$ and by the same argument as above 
$H_{g_i\inv} (N )\subset N$.  Hence $H_{g_i}( N) = N$.  In particular, setting $L=l^u(x, g_i(x))$ we have
\[g_i(N) = N+_u L.\]

Set 
\[D = \{ y+_u l\mid {|l|< R, [y] \in N}\} .\]
Since $d^u([x], g_i([x]))<\frac {1}{R}< R$, we have ${g_i}( N) \subset D$ and ${g_i\inv}( N) \subset D$.  Inductively, we see that for any $k$ and \[[x], [y] \in \bigcup _{|j|\le k} {g_i^j}( D) \]
that $\unst x \cap \stab y \neq \emptyset$, hence $\bigcup _{|j|\le k} {g_i^j}( D)$ is a product chart.  In particular, we have equality between product charts
$\bigcup _{j\in \Z} {g_i^j}( D)=  \unst{ [x]}$, thus showing that $[x]$ is isolated in $\Omega^s([x])$.  
By Lemma \ref{top3} we see that $\Omega^s([x]) = [x]$ for all $[x]\in \wtd \Omega$.

Considering $[x]$ as a subset of $\wtd \Lambda$ we have $\wtd \Lambda = \orbit_G(\unst {[x]})$ and $$\wtd B = \orbit_G(\stab{\unst {[x]}}).$$  If $g\in G$ is such that $g(\unst {[x]}) \neq \unst {[x]} $ then $$g(\stab{\unst {[x]}}) \cap (\stab{\unst {[x]}}) = \emptyset.$$  Thus we must have $\wtd \Lambda = \unst {[x]}$ since otherwise $\wtd B$ would not be  connected.  
\end{proof}

The following is immediate from Lemma \ref{lem:notPD}.

\begin{corollary}\label{cor:GPS}
When $G$ fails to act properly discontinuously on $\wtd \Omega$ then $\wtd \Lambda$ admits a {global product structure}.
\end{corollary}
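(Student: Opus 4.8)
The plan is to read off Corollary \ref{cor:GPS} directly from Lemma \ref{lem:notPD}; no new dynamical input is needed, only an unwinding of the definitions of the global product relation $\sim$ and of the $u$-saturation $\unst{[x]}$. Recall that the global product structure we want is the bare statement that $\unst a \cap \stab b \neq \emptyset$ for all $a,b \in \wtd\Lambda$.

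First I would fix arbitrary $a, b \in \wtd\Lambda$ and produce an explicit point of $\unst a \cap \stab b$. Apply Lemma \ref{lem:notPD} with base point $b$: it gives $\wtd\Lambda = \unst{[b]}$. Since $\unst{[b]} = W^u([b]) = \bigcup_{y\in[b]} \unst y$, the containment $a \in \wtd\Lambda$ yields some $y \in [b]$ with $a \in \unst y$. Because lying on a common unstable leaf is an equivalence relation, $a \in \unst y$ forces $\unst a = \unst y$, and in particular $y \in \unst a$. On the other hand $y \in [b]$ means $y \sim b$, and by Definition \ref{def:EQrel} (together with the symmetry of $\sim$ established just before Claim \ref{prop:classProperties}) this forces $y \in \stab b$. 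Hence $y \in \unst a \cap \stab b$, which is therefore nonempty; since $a,b$ were arbitrary, $\wtd\Lambda$ has a global product structure.

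I do not expect any genuine obstacle at this stage: every step above is a one-line manipulation of definitions, and all of the difficulty has already been absorbed into Lemma \ref{lem:notPD} (and, before it, into the chain of results in Section \ref{sec:C2} passing from ``$G$ fails to act properly discontinuously on $\wtd\Omega$'' to ``$\Omega^s([x])$ is a singleton and $\wtd\Lambda = \unst{[x]}$''). The only point worth a moment's care is that Lemma \ref{lem:notPD} is stated for \emph{every} base point $x \in \wtd\Lambda$, so we are free to invoke it at the chosen target point $b$; the single identity $\wtd\Lambda = \unst{[b]}$ then handles an arbitrary source point $a$ at once.
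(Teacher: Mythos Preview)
Your proposal is correct and is precisely the unwinding the paper has in mind: the paper merely states that the corollary ``is immediate from Lemma \ref{lem:notPD},'' and your argument spells out the one-line deduction from $\wtd\Lambda = \unst{[b]}$ using the fact that $y\in[b]$ forces $y\in\stab b$ and that unstable leaves partition $\wtd\Lambda$. There is no difference in approach.
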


We now shift out attention back to $\wtd \Lambda$, under the assumption that $\wtd \Lambda$ admits a global product structure.  Our objective  is to prove the following.
\begin{proposition}\label{prop:MainAlg}
Assume $\wtd \Lambda$ has a global product structure.  Then $f\colon \Lambda \to \Lambda$ is conjugate to a leaf-wise hyperbolic automorphism  of a toral solen\-oid (see Section \ref{sec:AS}).
\end{proposition}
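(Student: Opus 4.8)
The plan is to convert the global product structure on $\wtd\Lambda$ into algebraic data on $G=\pi_1(B)$, then to build a $G$-equivariant homeomorphism of $\wtd\Lambda$ onto the covering $\wtd\sol$ of a toral solenoid constructed in Section~\ref{sec:Cover}, and finally to descend it. First I would pass to the leaf space: after replacing $f$ by $f^2$ (which keeps $\Lambda$ topologically mixing) we may assume each lift of $f$ preserves the transverse orientation of $\Fols$, and the orientation-reversing elements of $G$, which act on the leaf space by reflections rather than translations, affect the argument only cosmetically (cf.\ the proof of Claim~\ref{clm:P2}), so for readability I assume $G$ also preserves this orientation. Since $\wtd\Lambda$ has a global product structure, every $y\in\stab x\cap\wtd\Lambda$ satisfies $y\sim x$, so each $\Omega^s([x])$ is a singleton; arguing as in Lemma~\ref{lem:notPD} this gives $\wtd\Lambda=\unst{[x]}$, and $d_\Omega$ identifies $(\wtd\Omega,d_\Omega)$ isometrically with $(\R,|\cdot|)$ via $L\mapsto[x +_u L]$. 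Thus $G$ acts on $\wtd\Omega=\R$ by translations, giving a homomorphism $\phi\colon G\to\R$; every lift $\td f$ normalizes $G$, defining $\psi\in\mathrm{Aut}(G)$ by $\td f\,g\,\td f^{-1}=\psi(g)$, and since $\td f$ acts on $\wtd\Omega\cong\R$ as a similarity of ratio $e^h$ (the corollary following Definition~\ref{def:Metric}) one gets $\phi\circ\psi=e^h\phi$, whence $e^h\Gamma=\Gamma$ for $\Gamma:=\phi(G)$. Now $\phi$ is injective: if $1\ne g\in\ker\phi$ then $g$ fixes every point of $\wtd\Omega$ and so preserves each $\sim$-class as a subset of $\wtd\Lambda$; choosing a periodic point $p\in\Lambda$, a lift $\td p$, and a lift $\td F$ of a power of $f$ with $\td F(\td p)=\td p$ (so that $\td F_*\in\mathrm{Aut}(G)$ preserves $\ker\phi$), we have $g(\td p)\in[\td p]\subset\stab{\td p}$, hence $(\td F_*)^n(g)(\td p)=\td F^n(g(\td p))\to\td p$ in $\wtd B$ because $\td F$ contracts $\stab{\td p}$ toward $\td p$ (cf.\ \eqref{eqn:asymS}); since the $G$-action on $\wtd B$ is properly discontinuous and $(\td F_*)^n(g)\ne 1$, this is impossible. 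Therefore $G\cong\Gamma\subseteq\R$.

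Because $B=\bigcup_{n}f^{-n}(V)$ is an increasing union of open sets each homeomorphic, via a power of $f$, to a trapping neighborhood $V$ of $\Lambda$ with finitely generated $\pi_1$, the group $G$ is a direct limit of a finitely generated group under an endomorphism, so $G\otimes\Q$ is finite dimensional; let $k$ be its dimension (one checks $k=\dim\Lambda$, which is what Corollary~\ref{cor:dim3} uses). Also $\Gamma$ is nontrivial — otherwise $\wtd\Lambda=\Lambda$ would be compact and homeomorphic to $(\stab x\cap\wtd\Lambda)\times\R$, which is impossible — and, being a subgroup of $\R$, it is either infinite cyclic (forcing $k=1$ and the one-dimensional algebraic solenoid of \cite{MR0266227}) or dense. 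In all cases a Franks-type algebraic lemma (cf.\ \cite{MR0271990}) applied to $(\Gamma,\times e^h)$ yields a matrix $A\in\Mat(k,\Z)$ with $|\det A|\ge 1$ whose unique eigenvalue of modulus $>1$ equals $e^h$, together with an isomorphism $(G,\psi)\cong(\Z^k[A\inv],\tau_A)$ that carries $\phi$ to the embedding of $\Z^k[A\inv]$ into $\R$ along the expanding eigendirection $E^+$ of $A$.

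It remains to construct the conjugacy. I would identify $G$ with the deck group $\wtd N=\td\alpha(\Z^k[A\inv])$ of the covering $\td q\colon\wtd\sol\to\sol_A$ of Proposition~\ref{prop:PropertiesOfSolCover}, recall $\wtd\sol\cong\wtd\Sigma\times E^+\times E^-$ (Section~\ref{sec:MetrizeSol}) with $\td\sigma$ multiplying $E^+$ by $e^h$ and contracting $\wtd\Sigma\times E^-$, use the global product structure to write $\wtd\Lambda\cong(\stab x\cap\wtd\Lambda)\times\R$, and define a homeomorphism $\Psi\colon\wtd\Lambda\to\wtd\sol$ coordinate-wise: the $E^+$-coordinate is the quotient $\wtd\Lambda\to\wtd\Omega\cong\R\cong E^+$, which by the previous paragraph is $\td f\!\leftrightarrow\!\td\sigma$- and $G\!\leftrightarrow\!\wtd N$-equivariant, and the $(\wtd\Sigma\times E^-)$-coordinate is obtained by identifying the stable set $\stab x\cap\wtd\Lambda$ with the $\td\sigma$-stable leaf $\wtd\Sigma\times E^-$ compatibly with the contractions and with $G$ — coordinatizing a product chart around a fixed point $\td p$ of $\td F$, propagating outward by $\td F^{-1}$ and the $\Gamma$-translations (which on $\wtd\Lambda$ play the role the $\R^k$-action $\theta$ of Definition~\ref{def:action} plays on $\sol_A$), and matching these coordinates with those on $\wtd\sol$ by comparing the measure $\mu^u$ and the metric $d^s$ with the Euclidean and $d_\Sigma$ metrics of Section~\ref{sec:MetrizeSol}. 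That $\Psi$ is a homeomorphism should follow by comparing the metric $d$ of Definition~\ref{defn:MetOnUC} with the lifted product metric $\td d$; being $G$-equivariant, $\Psi$ then descends to a homeomorphism $\Lambda=\wtd\Lambda/G\xrightarrow{\sim}\wtd\sol/\wtd N=\sol_A$ conjugating $f|_\Lambda$ to $\sigma_A$ (Proposition~\ref{prop:PropertiesOfSolCover}\ref{2KER}), and since $\Lambda$ is hyperbolic $A$ has no eigenvalue on the unit circle, so $\sigma_A$ is leaf-wise hyperbolic. Everything up to here is either soft or, as with the injectivity of $\phi$, short; I expect the real difficulty to lie in the transverse coordinate — establishing that $\stab x\cap\wtd\Lambda$ is a trivial $\wtd\Sigma$-bundle over a Euclidean $E^-$ intertwining $\td F$ with a power of $\td\sigma$ and respecting $G$. (An alternative would be to read this structure off the Markov-partition symbolic dynamics of $f|_\Lambda$, matching its subshift and its stable--unstable boundary identifications with those of $\sigma_A$ by means of the exactness of $\alpha^u$ from Claim~\ref{prop:exact}; but the transverse rigidity remains the essential point.)
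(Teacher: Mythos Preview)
Your algebraic setup parallels the paper's: both identify $G$ with a subgroup of the orientation-preserving isometries of $\R$ via the leaf space, conclude it is torsion-free abelian, and express it as $\Z^k[A^{-1}]=\varinjlim(\Z^k,A)$. The paper extracts the finitely generated piece $H\cong\Z^k$ and the endomorphism $f_*|_H$ directly from a lifted Markov partition (Claim~\ref{clm:poo}) rather than from a trapping neighborhood and a Franks-type eigenvalue lemma, and it deduces hyperbolicity of $A$ only \emph{a posteriori}, from properness of the conjugacy once it has been built.

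The substantive divergence, and the gap in your outline, is the construction of the conjugacy itself. You propose a coordinate-wise map, with the $E^+$-coordinate given by the leaf-space quotient and the transverse $(\wtd\Sigma\times E^-)$-coordinate obtained by matching $\stab x\cap\wtd\Lambda$ with the $\td\sigma$-stable leaf. You correctly flag this transverse matching as the crux, and neither ``propagating outward by $\td F^{-1}$ and $\Gamma$-translations'' nor the symbolic alternative supplies a concrete mechanism for producing a $G$-equivariant, $\td f$-intertwining homeomorphism onto $\wtd\Sigma\times E^-$; this is exactly where the possibly wild topology of $\Lambda$ (e.g.\ a nowhere-differentiable torus or a proper solenoid) must be confronted, and your sketch does not resolve it.

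The paper bypasses this difficulty via a global shadowing argument adapted from Hiraide~\cite{MR1836432}. Lemma~\ref{lem:Shadowing} shows that on a space $\R^k\times\Upsilon$ carrying a cocompact isometric group action, a product map expanding $\R^k$ and asymptotically contracting $\Upsilon$ enjoys global shadowing and a uniform expansivity bound; both $\wtd\Lambda$ and $\wtd\sol$ fit this framework (Remark~\ref{rem:UseLemma}). The map $\Psi\colon\wtd\sol\to\wtd\Lambda$ is then \emph{defined} by shadowing: the $\td\sigma$-orbit of $\xi$ visits translates of a fundamental domain indexed by a sequence $\{\alpha_j\}\subset\Z^k[A^{-1}]$, and applying $\wtd\Phi^{-1}(\alpha_j)$ to a reference $\td f$-orbit in $\wtd\Lambda$ gives an $L$-pseudo-orbit whose unique shadowing point is $\Psi(\xi)$. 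Equivariance and semiconjugacy are immediate from the definition, and bijectivity follows by running the same construction in reverse. This replaces your unresolved transverse-coordinate problem with a soft dynamical one and is the missing ingredient in your outline.
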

To prove Proposition \ref{prop:MainAlg} we need the following technical result.  We note that the proof technique for Proposition \ref{prop:MainAlg}, including Lemma \ref{lem:Shadowing}, are adapted from \cite{MR1836432}.  

\subsubsection{Global shadowing lemma}
Let $(\Upsilon, \tau)$ be a metrizable topological space.  For a fixed $k$ let $\rho$ be the standard metric on $\R^k$. 
  Furthermore let
$\{d_x\}_{x\in \R^k}$ be a family of complete metrics on $\Upsilon$ (each inducing the topology $\tau$) 
such that
\begin{enumerate}
\item $d_x$-balls in $\Upsilon$ are precomact for all $x\in \R^k$
\item \label{cont} the induced map $\R^k\times \Upsilon\times \Upsilon\to \R$ given by \[(x, \xi, \eta) \mapsto d_x(\xi,\eta)\] is continuous.
\end{enumerate}
Let $\Omega = \R^k\times \Upsilon$  with projections $\pi_1\colon \Omega \to \R^k$ and $\pi_2 \colon \Omega\to \Upsilon$.   Given $x,y \in \Omega$ let $\Xi(x,y)$ be the set of sequences $\{x_0, y_0, \dots, x_k, y_{k}\} $ such that 
\begin{enumerate}
\item $x= x_0$ and $y = y_{k}$;
\item $\pi_1 (x_{j}) = \pi_1 (y_{j}) $ for all $0\le j\le k$;
\item $\pi_2 (x_{j}) = \pi_2 (y_{j-1}) $ for all $1\le j\le k$.
\end{enumerate}
Given an $\xi\in \Xi(x,y)$ define 

\[l(\xi) := \sum_{j= 0}^k d_{\pi_1(x_{j})}\big( \pi_2(x_{j}), \pi_2(y_{j})\big) + \sum_{j= 1}^k\rho\big( \pi_1(x_{j}), \pi_1(y_{j-1})\big)\]
and define \begin{align}\label{eq:metricdef}  d(x,y) := \inf_{\xi \in \Xi(x,y)} \{l(\xi)\}.\end{align}  Clearly  $d$ defines a metric on $\Omega$; furthermore, the  continuity of the function $(x, \xi, \eta) \mapsto d_x(\xi,\eta)$  guarantees that the metric topology is consistent with the product topology of $\R^k \times \Upsilon$.  

Given a metric space $(X,d)$,  a homeomorphism $f\colon X\to X$ is called \emph{expanding} if there is some $\mu>1$ so that for all $x,y\in X$, $d(f(x),f(y))\ge \mu d(x,y)$.  
A sequence $\{x_j\}_{j\in \Z}\subset (X, d)$ is called an \emph{$L$-pseudo orbit} for $f$ if $d(f(x_j), x_{j+1})\le L$ for all $j$. Given an $L$-pseudo orbit $\{x_j\}$ we say a point $x\in X$ \emph{shadows} $\{x_j\}$ if there is some $\delta$ so that $d(f^j(x), x_j)\le \delta$ for all $j$.

\begin{lemma}[Global Shadowing]\label{lem:Shadowing}
Let $h\colon  \Omega\to \Omega$ be a product homeomorphism $h\colon (x, \xi) \mapsto (h_1(x), h_2(\xi))$.  Assume that $h_1\colon \R^k\to \R^k$ is expanding with respect to the metric $\rho$.  Furthermore assume that $h_2\colon \Upsilon\to \Upsilon$ is \emph{asymptotically exponentially contracting on bounded sets} with respect to each metric $d_x$; that is, given an $R>0$, $\xi \in \Upsilon$, and $x\in \R^k$ there are  $c>0$ and $\lambda<1$, depending continuously on $(x,\xi)\in \Omega$, so that if $d_x(\xi, \zeta) \le R$ then 
\[d_{h_1^j(x)}(h_2^j(\xi), h_2^j(\zeta)) \le c \lambda ^j d_x(\xi, \zeta)\] 
for all $j\ge 0$.
Additionally assume that $\Omega$ admits a properly discontinuous action by a subgroup $G$ of the group of isometries of $(\Omega,d)$ such that $h$ preserves $G$-orbits, and the quotient $\Omega/G$ is compact. 
We have the following.
\begin{enumerate}[ label=\emph{\alph*)},  ref=\ref{lem:Shadowing}(\alph*)]
\item \label{unfHol} Given a $C>0$ there is a $K>0$ so that if  $d(x,y)\le C$ for any $x, y \in \Omega$, then 
\[d_{\pi_1(y)} \big(\pi_2(y), \pi_2(x) \big)\le K.\] 

\item \label{unfContract}Given an $R>0$ there are $c>0$ and $\lambda<1$ (depending only on $R$) so that for any $\xi,\zeta \in \Upsilon$  and $x\in \R^k$,   $d_{x} (\xi, \zeta) \le R$ implies \[d_{h_1^j(x)}\big(h_2^j(\xi), h_2^j(\zeta)\big) \le c \lambda ^j d_x(\xi, \zeta).\] 

\item \label{shadow} Given any $L$-pseudo orbit $\{x_j\} \subset \Omega$ in the metric $d$, there is a point $x\in \Omega$ that shadows $\{x_j\}$.  
\item \label{uniqueCont} Fix $C> 0$.  Then there is a sequence $\{ \epsilon_N\}$ with $\epsilon_N\to 0$ as $N \to \infty$ such that $$d(h^j(x), h^j(y))\le C \mathrm{\ \ for \ \ }|j|\le N$$ implies $d(x,y) \le \epsilon_N$.
\end{enumerate}
\end{lemma}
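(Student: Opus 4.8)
The plan is to first promote the two pointwise hypotheses on $h$ to the uniform estimates \ref{unfHol} and \ref{unfContract}, using only compactness of $\Omega/G$ and the assumed continuity, and then to run a hyperbolic-shadowing argument that reconstructs the shadowing point of \ref{shadow} from its two ``coordinate'' limits; the statement \ref{uniqueCont} then follows formally from \ref{unfHol}, \ref{unfContract} and the expansion/contraction rates.

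For \ref{unfHol} I would first observe that any zigzag realizing $d(x,y)$ has total horizontal length at least $\rho(\pi_1(x),\pi_1(y))$, so $d \ge \rho$ on $\R^k$-coordinates and a near-optimal zigzag realizing $d(x,y)\le C$ stays (in its $\R^k$-coordinate) inside a fixed compact ball $Q\subset\R^k$. Carrying $x$ into a fixed compact subset meeting every $G$-orbit by an element of the isometric $G$-action, and invoking the continuity hypothesis \ref{cont} together with precompactness of the $d_x$-balls, one bounds $d_{\pi_1(y)}(\pi_2(y),\pi_2(x))$ by some $K=K(C)$; as a byproduct $d$-balls in $\Omega$ are precompact, i.e.\ $(\Omega,d)$ is a proper metric space, which is needed for the limits below. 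Applying this with $p=(x,\xi)$, $q=(x',\zeta)$ and factoring $d(p,q)$ through $(x,\zeta)$ also yields a useful consequence: if $d_x(\xi,\zeta)\le R$ and $\rho(x,x')\le R'$ then $d_{x'}(\xi,\zeta)\le K(R+R')$, so the metrics $\{d_x\}$ are uniformly comparable over bounded variation of $x$. For \ref{unfContract} I would note that $d_x(\xi,\zeta)\le R$ forces $d((x,\xi),(x,\zeta))\le R$, so the triples $(x,\xi,\zeta)$ in question are carried by $G$ into a $d$-precompact subset of $\Omega\times\Omega$; the contraction constant and rate furnished by the hypothesis, being continuous and invariant under the $G$-action, descend to the compact quotient and are therefore bounded, giving $c=c(R)<\infty$ and $\lambda=\lambda(R)<1$.

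For \ref{shadow}, write $x_j=(a_j,\xi_j)$. Since $\rho(h_1(a_j),a_{j+1})\le d(h(x_j),x_{j+1})\le L$ and $h_1$ is an expanding homeomorphism of $\R^k$ (so $h_1^{-1}$ is a $\mu^{-1}$-contraction), the limit $a:=\lim_{n\to\infty}h_1^{-n}(a_n)$ exists with geometrically summable increments and satisfies $\rho(h_1^{j}(a),a_j)\le L/(\mu-1)$ for all $j\in\Z$; this reconstructs the expanding coordinate from the future. For the contracting coordinate I would reconstruct from the past: set $\eta_n:=h_2^{n}(\xi_{-n})$. Applying \ref{unfHol} to $d(h(x_{-n-1}),x_{-n})\le L$ gives $d_{a_{-n}}(\xi_{-n},h_2(\xi_{-n-1}))\le K(L)$, and \ref{unfContract}, pushing this pair forward under $h_2$ with base $\R^k$-coordinates comparable to the true orbit $\{h_1^{j}a\}$ (using the uniform comparability above), shows that for each fixed $j$ the sequence $\{h_2^{j}\eta_n\}_n$ is Cauchy in $d_{h_1^{j}a}$ with increments $O(\lambda^{n})$; completeness of the $d_x$ yields $\eta:=\lim_n\eta_n$, and summing the per-step errors of the pseudo-orbit through \ref{unfContract} gives $d_{h_1^{j}a}(h_2^{j}\eta,\xi_j)\le cK(L)/(1-\lambda)$ for every $j$. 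Then $x:=(a,\eta)$ shadows $\{x_j\}$, since a two-step zigzag gives $d(h^{j}x,x_j)\le d_{h_1^{j}a}(h_2^{j}\eta,\xi_j)+\rho(h_1^{j}a,a_j)\le\delta$ with $\delta$ depending only on $L$. Finally, for \ref{uniqueCont}: if $d(h^{j}x,h^{j}y)\le C$ for all $|j|\le N$, then evaluating the expanding direction at time $+N$ gives $\rho(\pi_1 x,\pi_1 y)\le\mu^{-N}C$, while \ref{unfHol} at time $-N$ followed by $N$ applications of \ref{unfContract} gives $d_{\pi_1 y}(\pi_2 x,\pi_2 y)\le c\lambda^{N}K(C)$; adding these bounds yields $d(x,y)\le\epsilon_N:=\mu^{-N}C+c\lambda^{N}K(C)\to 0$.

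I expect the main obstacle to be the non-compactness of $\Upsilon$: the pointwise holonomy and contraction data are only controlled after being transported into a compact piece of $\Omega$ using the isometric $G$-action, and throughout \ref{shadow} one must carefully track which metric $d_x$ governs each step, because \ref{unfContract} is stated relative to a base point in $\R^k$ that itself only approximately follows the pseudo-orbit — which is precisely what the uniform comparability of $\{d_x\}$ is for. Once \ref{unfHol}--\ref{unfContract} are in hand, the constructions in \ref{shadow} and \ref{uniqueCont} are routine geometric-series estimates.
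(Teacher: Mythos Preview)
Your proposal is correct and matches the paper's proof in structure: parts \ref{unfHol}, \ref{unfContract}, \ref{uniqueCont} are handled identically (pointwise constants made uniform via the cocompact isometric $G$-action, then routine geometric-series estimates). The one substantive difference is in part \ref{shadow}. You anchor the contracting-coordinate Cauchy argument at the shadowing point $a$ itself, using the ``uniform comparability'' of the metrics $\{d_x\}$ you derived from \ref{unfHol} to transfer estimates from the pseudo-orbit base points $a_{-n}$ to the true-orbit points $h_1^{-n}a$ before applying \ref{unfContract}. The paper instead observes that $h_1$, being an expanding homeomorphism of $\R^k$, has a unique fixed point $p$, and that for $j\le 0$ the $\R^k$-coordinates $\pi_1(x_j)$ of the pseudo-orbit remain within a fixed distance $R$ of $p$ (by iterating the contraction $r\mapsto\mu^{-1}(r+L)$). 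This allows the entire Cauchy estimate for $\{h_2^n(\xi_{-n})\}$ to be run in the single metric $d_p$: since $h_1^n(p)=p$, applying \ref{unfContract} with base point $p$ returns bounds in $d_p$ at every step, with no metric comparison needed. Your route is equally valid and the comparability lemma you isolate is a natural tool (some form of it seems needed anyway when verifying that the constructed point shadows for \emph{all} $j\in\Z$); the paper's fixed-point trick simply makes the existence of the limit $z=\lim_n h_2^n(\xi_{-n})$ a one-line application of \ref{unfContract}.
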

\begin{refrem}\label{rem:UseLemma} Note that Lemma \ref{lem:Shadowing} applies to $\wtd \Lambda$ in the case that $\wtd \Lambda$ has global product structure by choosing an $x\in \wtd \Lambda$ and taking $\Upsilon = \wtd \Lambda \cap \stab x$, $ k =1$, $\rho = d^u$ in Definition \ref{defn:unstMetric}, and $d_x$ the metrics $d^s_x$ in Definition \ref{defn:stabMetric}. Then the metric in Definition \ref{defn:MetOnUC} corresponds to \eqref{eq:metricdef}. 

Furthermore, assuming the linear map $A\colon \R^n\to \R^n$ is hyperbolic, we have that the cover $\wtd \sol$ constructed in Section \ref{sec:AS} and endowed with the metric $\td d$ constructed in  Section \ref{sec:MetrizeSol} satisfies the hypotheses of Lemma \ref{lem:Shadowing} with $\Upsilon = E^-\times \wtd \Sigma$ and $k = \dim E^u$. \end{refrem}

\begin{proof}[Proof of Lemma \ref{unfHol} and \ref{unfContract}]
The hypotheses of Lemma \ref{lem:Shadowing}  guarantee that the numbers $K$, $c$, and $\lambda$ can be chosen pointwise on $\Omega$.  
Since the group $G$ acts via isometries, we may assume they are constant on $G$-orbits.  Since the quotient $\Omega/G$ is compact, we may choose uniform $K$, $c$, and $\lambda$.   
\end{proof}

\begin{proof}[Proof of Lemma \ref{shadow}]
Because $h_1$ is an expanding homeomorphism, there is a unique fixed point $p\in \R^k$.  If $\{x_j\}$ is an $L$-pseudo orbit for $h$ in the metric $d$ then ${\pi_1(x_j)}$ is an $L$-pseudo orbit for $h_1$ in the metric $\rho$; that is, $\rho (\pi_1(h(x_j)), \pi_1(x_j+1))\le L$.  But then $$\rho (\pi_1(h^{-j}(x_j)),  \pi_1(h^{-j-1}(x_j+1)))\le \mu^{-j-1} L$$ from which we see that the sequence $$\{\pi_1(x_0), \pi_1(h\inv (x_1)), \dots, \pi_1(h^{-j}( x_j)), \dots\}$$ is Cauchy.  Set \[y = \lim_{j \to \infty} \{\pi_1(x_0), \pi_1(h\inv (x_1)), \dots, \pi_1(h^{-j} (x_j)), \dots\}.\]

Let $g\colon \R\to \R$ be the map $g\colon x \mapsto \mu\inv (x+L)$.  Then $g$ is contracting, hence has a unique fixed point.  In particular the sequence $\{x, g(x), g^2(x), \dots\}$ is bounded for any $x$.  Set
\[R:= \sup\{ \rho (\pi_1(x_0), p),  g( \rho (\pi_1(x_0), p)) , g^2( \rho (\pi_1(x_0), p)), \dots\} < \infty .\]  Then  for every $j\le 0$ we have \[\rho (\pi_1(x_j), p) \le R < \infty.\]

Taking $C =  R+ L + \mu R$ we have that $d\Big(\big(p,\pi_2(x_j)\big), \big(p,\pi_2(h(x_{j-1}))\big)\Big) \le C$ for all $j\le0$, whence from  \ref{unfHol} we may find a $K<\infty$ so that $d_p(\pi_2(x_j), \pi_2(h(x_{j-1}))) \le K < \infty$ for all $j\le0$.  
Thus from \ref{unfContract} the sequence \[\{\pi_2(x_0), \pi_2(h(x_{-1})), \pi_2(h^2(x_{-2})),\dots , \pi_2(h^j(x_{-j})), \dots \}\] is Cauchy in the metric $d_p$.  Let \[z= \lim_{j\to \infty} \{\pi_2(x_0), \pi_2(h(x_{-1})), \pi_2(h^2(x_{-2})),\dots , \pi_2(h^j(x_{-j})), \dots \}.\]  Then one easily verifies that $x := (y,z)$ shadows the $L$-pseudo orbit $\{x_j\}$.  
\end{proof}

\begin{proof}[Proof of Lemma \ref{uniqueCont}]
For $C$, let $K$ be as in \ref{unfHol} and let  $c$, $\lambda$ be as in \ref{unfContract} with $R= K$.  
Fix $x, y\in \Omega$ so that $d(h^j(x), h^j(y))\le C$ for $|j|\le N$.  Then we have \[\rho(\pi_1(h^j(x)),\pi_1( h^j(y)))\le C\] for $j\le N$, hence  \[\rho(\pi_1(x),\pi_1(y)) \le \mu^{-N} C.\]  
Furthermore,

\[d_{h_1^{-N}\big(\pi_1(y)\big)} \bigg(h_2^{-N}\big(\pi_2(y)\big), h_2^{-N}\big(\pi_2(x)\big)\bigg)\le K\] 
hence 
\[d_y(\pi_2(y), \pi_2(x)) \le cK\lambda^N\]
from which we conclude that \[d(x,y)\le  cK\lambda^N+  \mu^{-N} C\]
and the conclusion follows with $\epsilon_N := cK\lambda^N+  \mu^{-N} C$.  
\end{proof}

\subsubsection{Proof of Proposition \ref{prop:MainAlg}}  
We now return to the proof of Proposition \ref{prop:MainAlg}.  
We have the following observation.  
\begin{lemma} When $\wtd \Lambda$ has global product structure 
the covering group $G:=\pi_1(B)$
is torsion-free abelian.  
\end{lemma}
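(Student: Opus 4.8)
The plan is to show that $G$ embeds into the group of translations of $\wtd\Lambda$ arising from the global product structure, and that this translation group is abelian and torsion-free. The key structural fact is that, by Corollary \ref{cor:GPS} and Lemma \ref{lem:notPD}, $\wtd\Lambda = \unst{[x]}$ for any $x$, so $\wtd\Lambda$ is globally a product: choosing a basepoint, we identify $\wtd\Lambda$ homeomorphically with $\Upsilon \times \R$, where $\Upsilon = \wtd\Lambda \cap \stab x$ carries the metrics $d^s$ and the $\R$-factor is coordinatized by the signed unstable length $l^u$. The covering group $G$ acts freely (since $\wtd B$ is a cover of $B$) and by isometries on $(\wtd\Lambda, d)$, preserving the foliations $\Fols$ and $\unst{\cdot}$ and hence this product structure (up to possibly flipping the $\R$-coordinate via the transverse orientation).

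\medskip
\noindent\textbf{Step 1: $G$ preserves unstable orientation.} First I would show no nontrivial $g\in G$ reverses the transverse orientation of $\Fols$. If $g$ reverses orientation, then for appropriate $L$ the point $x +_u (L/2)$ and $g(x +_u (L/2))$ would be forced to lie on the same stable leaf, since $g(x) = x +_u L$ would have to hold after a holonomy adjustment — this is exactly the argument already used in the proof of Claim \ref{clm:P2}, which shows a nontrivial orientation-reversing deck transformation produces the contradiction $\stab{x +_u t/2} \cap \stab{g(x) -_u t/2} \neq \emptyset$. So every $g\in G$ acts on the $\R$-coordinate by an orientation-preserving homeomorphism of $\R$, and since $G$ acts by isometries for $d^u$ and $d^u$ restricted to an unstable leaf is the Lebesgue-type metric from $\mu^u$, $g$ acts on each unstable leaf by a translation in the $l^u$-coordinate.

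\medskip
\noindent\textbf{Step 2: $G$ embeds into $\Upsilon \times \R$ by an "evaluation at basepoint'' map.} Fix $x_0\in\wtd\Lambda$. Define $\Phi\colon G\to \Upsilon\times\R \cong \wtd\Lambda$ by $\Phi(g) = g(x_0)$, i.e., $\Phi(g) = \big(\pi_2(g(x_0)),\ l^u(x_0, \unst{x_0}\cap\stab{g(x_0)})\big)$ — this makes sense because $\wtd\Lambda$ has global product structure. Since $G$ acts freely, $\Phi$ is injective. The main point is to show $\Phi$ is a homomorphism into the \emph{group} $\Upsilon\times\R$ once $\Upsilon$ itself is given a group structure, or — cleaner — to show directly that $G$ is abelian. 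For this I would use the product structure: write $g(x_0) = (\psi_g, t_g)$ in the product coordinates. Because $g$ preserves the unstable foliation and translates each unstable leaf by $t_g$ (independent of the leaf — this uses that $d^u$ and the orientation are $G$-invariant, Step 1), and because $g$ preserves the stable foliation acting as $\psi_g$ on the transversal $\Upsilon$, composition of deck transformations gives $t_{gh} = t_g + t_h$ (abelian on the $\R$-factor automatically) and the action on $\Upsilon$ composes; one checks $\psi_{gh}$ depends only on $\psi_g,\psi_h$ and not on the $\R$-coordinates, so $G$ acts on $\Upsilon$. To get commutativity on $\Upsilon$, I would invoke that $\Upsilon\cong\Omega^s([x])\times(\text{a connected set})$ where $\Omega^s([x])$ is a single point (Lemma \ref{lem:notPD}), so in fact $\Upsilon$ is connected with trivial $\sim$-quotient, forcing the $G$-action on $\Upsilon$ to factor through something commutative — more precisely, $\wtd B = \orbit_G(\stab{\unst{[x]}})$ with $G$ permuting the connected sets $\stab{\unst{[x]}}$, and since $\wtd B$ is simply connected, $G$ is the deck group of the universal cover of $B$, while the product structure exhibits $G$ as acting on a product $\R \times (\text{simply connected})$ by translations in the first factor and homeomorphisms in the second; a standard argument (as in \cite{MR1836432}) shows such a group is abelian.

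\medskip
\noindent\textbf{Step 3: torsion-freeness.} If $g\in G$ has finite order $n>1$, then $t_g = 0$ (since $n t_g = t_{g^n} = 0$ in $\R$), so $g$ fixes every unstable leaf setwise and acts trivially on the $\R$-factor; thus $g$ descends to an action on the transversal $\Upsilon$ with $g^n = \mathrm{id}$. But $G$ acts freely, so $g$ acting with a fixed point anywhere would force $g = \mathrm{id}$; alternatively, since $g$ preserves each unstable leaf and these are copies of $\R$ on which $g$ acts by the identity translation, $g$ fixes points, contradicting freeness. Hence $G$ is torsion-free.

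\medskip
\noindent\textbf{Expected main obstacle.} The delicate part is Step 2 — rigorously extracting commutativity of the $G$-action on the transversal $\Upsilon$ from the global product structure, without assuming $\Upsilon$ is a manifold (it is typically a Cantor-set-cross-something). The leverage is Lemma \ref{lem:notPD} ($\Omega^s([x])$ a singleton) together with simple-connectivity of $\wtd B$: these should force the transversal action to be "rigid enough'' that commuting with the $\R$-translations pins it down. I expect this is handled exactly as in \cite{MR1836432}, by realizing $G$ as a subgroup of translations of the simply connected space $\wtd B \cong \R^k \times \Upsilon$ (here $k=1$), where translations manifestly commute and have no torsion.
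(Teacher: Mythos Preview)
Your Step~2 has a genuine gap: you correctly identify commutativity of the $\Upsilon$-action as the ``main obstacle,'' but you never actually establish it. The appeals to $\Omega^s([x])$ being a singleton and to \cite{MR1836432} are gestures, not arguments; nothing you wrote forces the homeomorphisms $\psi_g$ of $\Upsilon$ to commute. The paper sidesteps this problem entirely with a much cleaner observation that you nearly have in hand: the assignment $g \mapsto t_g$ is already a group homomorphism $G \to (\R,+)$ (this is your $t_{gh}=t_g+t_h$), and it is \emph{injective}. Injectivity follows from the fact recorded in Section~\ref{sec:Prelim} that $W^s(g(x)) \cap W^s(x) = \emptyset$ whenever $g\neq 1$: if $t_g=0$ then $g$ preserves every stable leaf, hence $g=1$. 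Thus $G$ embeds in the additive group $\R$ and is torsion-free abelian---no analysis of the $\Upsilon$-factor is needed at all. Said differently, the paper simply passes to the quotient $\wtd\Lambda/{\sim_s}\cong(\unst x,d^u)\cong\R$, observes that $G$ acts there faithfully by orientation-preserving isometries, and is done.

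There is also a stable/unstable mix-up in your Step~3. If $t_g=0$ then in product coordinates $g(s,u)=(\psi_g(s),u)$, so $g$ preserves every \emph{stable} slice $\{u=\text{const}\}$, not every unstable leaf; your ``alternatively, since $g$ preserves each unstable leaf\dots'' sentence is therefore wrong as stated. Your first alternative (``$G$ acts freely, so a fixed point forces $g=\mathrm{id}$'') is the right instinct, but you never produce the fixed point---the leaf-disjointness fact above is exactly what supplies it, and once you invoke it you get injectivity of $g\mapsto t_g$, which subsumes both Steps~2 and~3.
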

\begin{proof}
Fix any $x\in \wtd \Lambda$.  Since $\wtd \Lambda$ has global product structure, we may   canonically identify $\wtd \Lambda$ with $\unst x \times \left(\wtd \Lambda \cap \stab x\right)$. 
Let $\sim_s$ be the equivalence relation ${z\sim_s y}$ if $z\in \stab y$.  Then we have a canonical identification of $\unst x$ with $\Lambda /{\sim}_s$ which induces a $G$-action  on $\unst x$.  By Theorem \ref{thm:RSS} and the construction of the pseudometric $d^u$ on $\wtd \Lambda$, we have that $G$ acts on $(\unst x, d^u)$ via isometries.  Furthermore the isometries are  orientation-preserving since otherwise there would be a non-identity $g\in G$ and $y \in \unst x$ with $\stab y = \stab{g(y)}$.  
Hence we naturally identify $G$ with a subgroup of the orientation-preserving isometries of $\R$ and the result follows. 
\end{proof}

Note that $G$ need not be finitely generated.  However, we can represent $G$ as the limit of a directed system of finitely generated, torsion-free abelian groups as follows.
Let $\{\Rec_j\}$ be a Markov partition of $\Lambda$ and let $\{\wtd \Rec_{j,\alpha}\}_{\alpha \in G}$ be a lift of the Markov partition where each $\wtd \Rec_{j,\alpha}$ is homeomorphic to $\Rec_j$.  
Fix an $x\in \wtd \Lambda$.  Recall that $\unst {\pi(x)}$ is dense in $\Lambda$.  For each $j$, distinguish a $\overline{ \Rec_j} \in \{\wtd \Rec_{j, \alpha}\}$ so that \begin{align}\label{eq:Koo} \unst x \cap \int(\overline{\Rec_j} )\neq \emptyset.  \end{align}  Set $D = \bigcup_j \overline{\Rec_j}$.  Then $D$ is a fundamental domain for the covering $\wtd \Lambda \to \Lambda$.  

Let $H\subset G$ be the subgroup generated by $\{\alpha \in G \mid \alpha (D) \cap D \neq \emptyset\}$.  Since $D$ is compact and $G$ acts discontinuously,  $H$ is finitely generated.  Let $N := \orbit_H(D)$.  Note  $N$ is clopen in $\wtd \Lambda$ and  $H=\{\alpha\in G \mid  \alpha(N) = N\}$.
\begin{claim}\label{clm:poo}
There is a lift $\td f$ of $f$ so that $\td f(N) \subset N$.  Indeed $f_*H \subset H$, where $f_*\colon G\to G$ is the automorphism induced by the diffeomorphism $f\colon B\to B$.
\end{claim}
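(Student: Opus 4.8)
The real content of the claim is the inclusion $f_*H\subset H$; the existence of a lift with $\td f(N)\subset N$ then drops out. I would organise the argument in three steps, with the third step being the crux.

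\emph{Step 1 (the clopen block structure of $N$).} First note that $f_*\colon G\to G$ is an \emph{automorphism}, since $f\colon B\to B$ is a diffeomorphism. Next extract from the definitions the following description of $N$: writing $S:=\{\alpha\in G\mid \alpha(D)\cap D\neq\emptyset\}$ (so $H=\langle S\rangle$) and using that $G$ is abelian (available since $\wtd\Lambda$ has global product structure), one checks that $g(N)\cap N\neq\emptyset$ forces $g\in H$. Indeed, if $g(N)\cap N\neq\emptyset$ then, since $N=\orbit_H(D)$ and $G$ is abelian, there are $h,h'\in H$ with $hg(D)\cap h'(D)\neq\emptyset$, hence $(h')^{-1}hg\in S\subset H$ and so $g\in H$. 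Consequently $\{g(N)\mid gH\in G/H\}$ is a partition of $\wtd\Lambda$ into pairwise disjoint clopen sets, each of which projects onto $\Lambda$ (because $\pi(g(N))=\pi(N)\supset\pi(D)=\Lambda$); in fact each fibre $\pi^{-1}(y)$ meets $N$ in exactly one $H$-orbit, so $\restrict{\pi}{N}\colon N\to\Lambda$ is a covering with deck group $H$.

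\emph{Step 2 (choice of lift; propagation along $\unst x$).} Since $D$ is a fundamental domain, $\pi^{-1}(f(\pi(x)))\cap D\neq\emptyset$, so among the $G$-torsor of lifts of $f$ to $\wtd B$ there is one, $\td f$, with $\td f(x)\in D\subset N$; it restricts to a homeomorphism of $\wtd\Lambda$. Because $\unst x$ is connected (a line, as $\restrict{\dim E^u}{\Lambda}=1$), $N$ is clopen, and $\unst x$ meets $\int(\overline{\Rec_j})\subset D\subset N$, we conclude $\unst x\subset N$; likewise $\td f(\unst x)=\unst{\td f(x)}\subset N$ since $\td f(x)\in N$. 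In particular the component $C_0$ of $\wtd\Lambda$ containing $\unst x$, and its image $\td f(C_0)$, both lie in $N$.

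\emph{Step 3 (reduction, and the main obstacle).} Since $f_*$ is a homomorphism and $H=\langle S\rangle$, it suffices to prove $f_*(\alpha)\in H$ for $\alpha\in S$. Fix such an $\alpha$: then $\td f(\alpha(D))\cap\td f(D)\neq\emptyset$, i.e. $f_*(\alpha)\bigl(\td f(D)\bigr)\cap\td f(D)\neq\emptyset$. By Step 1 it therefore suffices to establish
\[\td f(D)\subset N,\]
for then $f_*(\alpha)(N)\cap N\supset f_*(\alpha)(\td f(D))\cap\td f(D)\neq\emptyset$ gives $f_*(\alpha)\in H$, and moreover $\td f(N)=\orbit_{f_*H}\bigl(\td f(D)\bigr)\subset\orbit_H(N)=N$, which is exactly the asserted lift. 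Thus everything reduces to $\td f(D)\subset N$, and this is the step I expect to require the real work. I would prove it rectangle by rectangle: write $\td f(D)=\bigcup_j\td f(\overline{\Rec_j})$. By the choice of $\overline{\Rec_j}$, $\unst x$ meets $\int(\overline{\Rec_j})$, so $\td f(\unst x)$ meets $\int(\td f(\overline{\Rec_j}))$ and hence $\td f(\overline{\Rec_j})$ already meets $N$; furthermore $\td f(\overline{\Rec_j})$ is precisely the lift of the rectangle $f(\Rec_j)$ through $\unst{\td f(x)}$. It then remains to propagate membership in the clopen set $N$ from this one unstable slice to all of $\td f(\overline{\Rec_j})$, which I would do using the Markov property: $f(\Rec_j)$ meets $\Rec_k$ only for combinatorially adjacent $k$, each such intersection fully spans $\Rec_k$ in the unstable direction, and the lift of $\Rec_k$ occurring inside $\td f(\overline{\Rec_j})$ must be an $H$-translate of the distinguished $\overline{\Rec_k}$ because $N$ is clopen, $u$-holonomy-invariant, and already contains $\unst x$ together with its holonomy images through all these rectangles; running this over the finitely many $k$ and using that $\Lambda$ (hence the relevant union of rectangles) is connected yields $\td f(\overline{\Rec_j})\subset N$, and therefore $\td f(D)\subset N$.
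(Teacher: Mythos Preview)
Your proof is correct and takes essentially the same approach as the paper: reduce to $\td f(D)\subset N$ and establish this rectangle-by-rectangle via the Markov property, then conclude $f_*H\subset H$ and $\td f(N)\subset N$. The paper's execution of the key step is slightly more direct than your Step~3: once a full unstable slice $I$ of $\td f(\overline{\Rec_j})$ is known to lie in $N$, the \emph{stable} Markov condition $f(W^s_{\Rec_j}(z))\subset\Rec_k$ (for $f(z)\in\int\Rec_k$) places each stable slice of $\td f(\overline{\Rec_j})$ inside a single lifted rectangle which meets $I\subset N$ and hence lies in $N$ --- so no appeal to unstable spanning or to connectedness of $\Lambda$ is needed.
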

\begin{proof}
Since $D$ is a lift of a Markov partition, by the definition of $H$ if $y\in N$ then $\unst y \subset N$.    Choose a lift $\td f $ of $f\colon \Lambda \to \Lambda$ so that \begin{align}\td f (x) \cap N \neq \emptyset \label{eqn:contain}\end{align}
where  $x$ is as chosen above.

Now $\td f(N) = \orbit_{f_*H} (\td f(D))$.  We note that $f_* H $ is the subgroup of $G$ generated by the set $\mathcal A := \{\alpha \in G \mid \alpha (\td f(D)) \cap \td f(D)\neq \emptyset\}$.  By \eqref{eq:Koo} and \eqref{eqn:contain} we have that $\int(\td f(\overline {\Rec_j}) )\cap  N\neq \emptyset$, hence by the Markov property (Definition \ref{def:markov}(3))  we have  $\td f(\overline {\Rec_j}) \subset  N$ for each $j$.  In particular $\td f (D) \subset N$. Hence we conclude that $\mathcal  A \subset H$, $f_* H \subset H$ and $\td f(N) \subset N$.  \end{proof}

Note that $N$ is a covering of $\Lambda$ with covering group $H$.  Also, for any $y\in \wtd \Lambda$ and $\td f$ as in Claim \ref{clm:poo} there is some $m$ so that $\td f^m (y) \in N$.    
Consequently, we may reconstruct $\wtd \Lambda$ and the covering group $G$ as  limits of the directed systems 
\[\Lambda\cong\varinjlim \left\{ N \xrightarrow{\td f} N \xrightarrow{\td f} N \xrightarrow{\td f} \dots \right\}\]
and 
\[G\cong \varinjlim \left\{ H \xrightarrow{f_*} H \xrightarrow{f_*} H \xrightarrow{f_*}\dots \right\}.\]

Fix an isomorphism $\Phi\colon H \to \Z^k$ and let $A\colon \Z^k \to \Z^k$ be the endomorphism $\Phi\circ f_* \circ \Phi\inv$.  Considering $\Z^k$ as embedded in $\R^k$,  $A\colon \Z^k\to \Z^k$ induces   a linear automorphism  on $\R^k$ and a surjective endomorphism on the quotient $\T^k= \R^k/\Z^k$, also denoted by $A$.  Let $\sol_A$ and $\wtd \sol$ be the solen\-oid and its cover constructed  in Section \ref{sec:AS}, and let $\sigma_A$ and $\td \sigma$ be the respective shift automorphisms.

Fix an identification $G =  \varinjlim (H, f_*)$ whence $f_*\colon G \to G$ is identified with the shift map $\tau_{\restrict{f_* }H}$.  
We have that the diagram
\[\xymatrix{H\ar[d]_\Phi \ar[r]^{f_*}&H\ar[d]^{\Phi}\\ \Z^k\ar[r]_A&\Z^k}\]
commutes, hence 
\[\xymatrix{H\ar[d]^\Phi \ar[r]^{f_*}&H\ar[d]^{\Phi} \ar[r]^{f_*}&H\ar[d]^{\Phi}\ar[r]^{f_*}&H\ar[d]^{\Phi}\ar[r]^{f_*}&\dots
\\ \Z^k\ar[r]^A&\Z^k\ar[r]^A&\Z^k\ar[r]^A&\Z^k\ar[r]^A&\dots}\]
induces an isomorphism $\wtd \Phi\colon G \to \Z^k[A\inv]$. 
Furthermore, we have $\wtd \Phi \circ f_*\circ \wtd \Phi \inv = \tau_ A$ where $\tau_ A$ is as constructed in Section \ref{sec:Cover}.

\begin{proof}[Proof of Proposition \ref{prop:MainAlg}]
Fix a lift $\td f$ of $f$.  Let $\sol _A, \sigma_A, \wtd \sol$, and $\td \sigma$ be as above.  Let $D$ be a fundamental domain for the cover $\wtd \sol\to \sol_A$; note that $D$ will be compact.  
Let $\wtd \Phi$ be the isomorphism between $G$ and $\Z^k[A\inv]$ above.  
We let $\Z^k[A\inv]$ act by addition on $\wtd \sol$ via the action $\td \alpha$ in Section \ref{sec:Cover}.

Given a $\xi \in \wtd \sol$ we may find a sequence $\{\alpha_j\} \subset \Z^k[A\inv]$ so that \begin{align}\label{eqn:Poo}\td \sigma ^j(\xi) \in D+ \alpha_j.\end{align}  
\begin{claim}
There is an $L$ so that for any $x\in \wtd \Lambda$, $\xi\in \wtd \sol$, and a sequence $\{\alpha_j\}$ satisfying \eqref{eqn:Poo},  the sequence $\{(\wtd \Phi\inv(\alpha_j))(\td f ^j( x))\}$ is an $L$-pseudo orbit.
\end{claim}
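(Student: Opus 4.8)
The plan is to estimate the one-step discrepancy of the sequence directly, using the deck-transformation relation for the lift $\td f$ together with the equivariance properties recorded in Proposition~\ref{prop:PropertiesOfSolCover}. Write $g_j := \wtd\Phi\inv(\alpha_j)\in G$, so the sequence in question is $\{g_j(\td f^j x)\}_{j\in\Z}$, and recall (Remark~\ref{rem:UseLemma}) that the relevant metric is the metric $d$ of Definition~\ref{defn:MetOnUC}, in which we must bound $d\bigl(\td f(g_j(\td f^j x)),\, g_{j+1}(\td f^{j+1}x)\bigr)$ independently of $j$, $x$, $\xi$ and of the choice of $\{\alpha_j\}$. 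Since $\td f$ is a lift of $f$, there is an automorphism of $G$, which by construction of $f_*$ is $f_*$ itself, with $\td f\circ g = (f_* g)\circ \td f$ for all $g\in G$; hence $\td f(g_j(\td f^j x)) = (f_* g_j)(\td f^{j+1}x)$. The metric $d$ is assembled $G$-equivariantly from the $G$-invariant pseudometric $d^u$ and the $G$-invariant stable distance $d^s$, so $G$ acts by isometries on $(\wtd\Lambda, d)$; combining this with the fact, already established, that $G$ is abelian, the one-step discrepancy equals $d\bigl(\td f^{j+1}x,\, h_j(\td f^{j+1}x)\bigr)$, where $h_j := g_{j+1}\inv\,(f_* g_j)\in G$.

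The heart of the argument is to show that $h_j$ ranges over a single finite subset $F\subset G$ depending only on the fixed data $D$, $\td\sigma$, $\wtd\Phi$. Applying $\td\sigma$ to \eqref{eqn:Poo}, i.e.\ to $\td\sigma^j(\xi)\in D+\alpha_j$, and using Proposition~\ref{2STAR}, we obtain $\td\sigma^{j+1}(\xi)\in \td\sigma(D)+\tau_A\alpha_j$; comparing with $\td\sigma^{j+1}(\xi)\in D+\alpha_{j+1}$ gives $\bigl(D + (\alpha_{j+1}-\tau_A\alpha_j)\bigr)\cap \td\sigma(D)\neq\emptyset$, so the element $\beta_j := \alpha_{j+1}-\tau_A\alpha_j$ lies in $F_0 := \{ g\in\Z^k[A\inv]\mid (D+g)\cap\td\sigma(D)\neq\emptyset\}$. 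Identifying $g$ with $\td\alpha(g)\in\wtd\sol$, the condition $(D+g)\cap\td\sigma(D)\neq\emptyset$ is equivalent to $g\in \td\sigma(D)-D$, a compact set (the continuous image of $\td\sigma(D)\times D$ under subtraction in $\wtd\sol$); since $\td\alpha$ embeds $\Z^k[A\inv]$ as a \emph{discrete} subgroup by Proposition~\ref{PDDD}, the set $F_0$ is finite. Finally, since $\wtd\Phi$ conjugates $f_*$ to $\tau_A$, one computes $\wtd\Phi(h_j) = \tau_A\alpha_j - \alpha_{j+1} = -\beta_j\in -F_0$, so $h_j\in F:=\wtd\Phi\inv(-F_0)$, a finite set independent of all choices.

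It remains to bound $d(y, hy)$ uniformly over $y\in\wtd\Lambda$ and $h\in F$. For fixed $h\in F$ the function $y\mapsto d(y,hy)$ is continuous, because $d$ induces the topology of $\wtd\Lambda$, and it is constant on $G$-orbits, because $d$ is $G$-invariant and $G$ is abelian; hence it descends to a continuous function on the compact space $\Lambda$ and is bounded by some $M_h<\infty$. Setting $L:=\max_{h\in F}M_h$, the chain of equalities of the first paragraph yields $d\bigl(\td f(g_j(\td f^j x)),\, g_{j+1}(\td f^{j+1}x)\bigr) = d(\td f^{j+1}x,\, h_j(\td f^{j+1}x))\le L$ for every $j$, which is the assertion. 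I expect the main obstacle to be the finiteness step: one must check carefully that the obstruction set $\td\sigma(D)-D$, and hence $F$, genuinely does not depend on $x$, $\xi$ or on the selection of the sequence $\{\alpha_j\}$ — this is exactly the point at which the compactness of the fundamental domain $D$ (and of its image $\td\sigma(D)$) together with the discreteness of $\td\alpha(\Z^k[A\inv])$ in $\wtd\sol$ do the essential work.
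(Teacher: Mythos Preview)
Your proof is correct and follows essentially the same approach as the paper: both identify the finite set $\mathcal A=\{a\in\Z^k[A\inv]\mid (D+a)\cap\td\sigma(D)\neq\emptyset\}$ (your $F_0$), use Proposition~\ref{2STAR} to show $\alpha_{j+1}-\tau_A\alpha_j\in\mathcal A$, and then bound the one-step discrepancy by $\sup\{d(\td y,(\wtd\Phi\inv(a))(\td y))\mid y\in\Lambda,\ a\in\mathcal A\}$ via the $G$-isometry of $d$. Your write-up is in fact more explicit than the paper's in two places --- the finiteness of $\mathcal A$ (compactness of $\td\sigma(D)-D$ meeting the discrete subgroup $\td\alpha(\Z^k[A\inv])$) and the descent of $y\mapsto d(y,hy)$ to the compact quotient $\Lambda$ --- and the invocation of abelianness is harmless though not strictly needed, since the isometry property alone gives $d\bigl((f_*g_j)z,\,g_{j+1}z\bigr)=d\bigl(z,\,g_{j+1}\inv(f_*g_j)z\bigr)$.
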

\begin{proof}
Let \[\mathcal A =\left\{a \in  \Z^k[A\inv] \mid  (D +a )\cap \wtd \sigma (D ) \neq \emptyset \right\}.\]
By Proposition \ref{2STAR} if $\xi\in D+ \alpha$ then $\wtd \sigma (\xi) \in (D+ a) +  \tau_ A (\alpha) $ for some $a\in \mathcal A$.  In particular, $\alpha_{j+1} = \tau_A\alpha_j + a$ for some $a\in \mathcal A$.   We set
\[L = \sup \{d(\td y, (\wtd \Phi\inv (a))(\td y))\mid {y\in \Lambda, a\in \mathcal A} \}\] where $\td y$ is an arbitrary lift of $y$ to $\wtd \Lambda$.  Finiteness of $\mathcal A$ guarantees $L<\infty$.  
Hence \begin{align*}
d\Big(  \td f\big( (\wtd \Phi& \inv (\alpha_j)) (\td f^j(x)) \big),  (\wtd \Phi \inv (\alpha_{j+1})) (\td f^{j+1}(x))     \Big)\\ 
&=d\left(   (f_* (\wtd \Phi \inv (\alpha_j))) (\td f^{j+1}(x)),  (\wtd \Phi \inv (\alpha_{j+1})) (\td f^{j+1}(x))     \right) \\ 
&=d\left(    (\wtd \Phi \inv (\tau_A(\alpha_j))) (\td f^{j+1}(x)),  (\wtd \Phi \inv (\alpha_{j+1})) (\td f^{j+1}(x))     \right)\\
&\le \max_{a\in \mathcal A} \left\{d\left(    (\wtd \Phi \inv (\tau_A( \alpha_j))) (\td f^{j+1}(x)),  (\wtd \Phi \inv (\tau_A(\alpha_{j} )+ a)) (\td f^{j+1}(x))     \right)\right\}\\
&\le L.
\end{align*}
Hence the claim holds.
\end{proof}

We define a map $\Psi \colon \wtd \sol \to \wtd \Lambda$ as follows.  Fix a $p \in \wtd \Lambda$.  Given $\xi \in \wtd \sol$ choose a sequence $\{\alpha_j\} \subset \Z^k[A\inv]$ satisfying \eqref{eqn:Poo}.  Then define  $\Psi( \xi)$ to be the unique point $x$  in $ \wtd \Lambda $ that shadows the $L$-pseudo orbit $\{(\wtd \Phi\inv(\alpha_j))(\td f ^j (p))\}$.  Note that Lemma \ref{shadow} guarantees the point $x$ exists, whereas Lemma \ref{uniqueCont} guarantees that the point $x$ is unique.  Furthermore, Lemma \ref{uniqueCont} guarantees that $\Psi\colon \wtd \sol \to \wtd \Lambda$ is continuous.  

\begin{claim}
$\Psi$ is proper.
\end{claim}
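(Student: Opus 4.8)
The plan is to prove that $\Psi$ is equivariant with respect to the natural lattice actions on its source and target, and then deduce properness from cocompactness. Concretely, $\wtd N = \td\alpha(\Z^k[A\inv])$ acts on $\wtd\sol$ by translation with $\wtd\sol/\wtd N \cong \sol_A$ compact (Proposition \ref{2KER}), while $G = \wtd\Phi\inv(\Z^k[A\inv])$ acts on $\wtd\Lambda$ by deck transformations with $\wtd\Lambda/G\cong\Lambda$ compact; both actions are free and properly discontinuous. I will show
\[\Psi\big(\xi + \td\alpha(a)\big) = \wtd\Phi\inv(a)\big(\Psi(\xi)\big)\qquad \text{for all } a\in\Z^k[A\inv],\ \xi\in\wtd\sol,\]
which simultaneously shows that $\Psi$ is independent of the sequence $\{\alpha_j\}$ chosen in its definition, and then properness will be immediate.

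To establish the displayed identity, fix $a$, $\xi$, and a sequence $\{\alpha_j\}$ with $\td\sigma^j(\xi)\in D+\td\alpha(\alpha_j)$. Iterating Proposition \ref{2STAR} gives $\td\sigma^j(\xi + \td\alpha(a)) = \td\sigma^j(\xi) + \td\alpha(\tau_A^j(a))$, so $\{\alpha_j + \tau_A^j(a)\}$ is admissible for $\xi + \td\alpha(a)$. Writing $h:=\wtd\Phi\inv(a)\in G$ and using $\wtd\Phi\circ f_* = \tau_A\circ\wtd\Phi$ together with the lift relation $\td f\circ h = f_*(h)\circ\td f$ (iterated to $f_*^j(h)\circ\td f^j = \td f^j\circ h$), one computes
\[\big(\wtd\Phi\inv(\alpha_j + \tau_A^j a)\big)\big(\td f^j(p)\big) = \big(\wtd\Phi\inv(\alpha_j)\big)\big(\td f^j(h(p))\big).\]
Now $G$ is abelian and acts on $(\wtd\Lambda, d)$ by isometries — the Riemannian metric on $B$ is $G$-invariant, and although $l^u$ need not be $G$-invariant (since $G$ may reverse the transverse orientation of $\Fols$) the pseudometric $d^u = |l^u|$ is. Applying the isometry $f_*^j(h)$ to the $\delta$-shadowing inequalities for $\Psi(\xi)$ and commuting it past $\wtd\Phi\inv(\alpha_j)$ shows that $h(\Psi(\xi))$ $\delta$-shadows the pseudo-orbit $\{(\wtd\Phi\inv(\alpha_j + \tau_A^j a))(\td f^j(p))\}$. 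Finally, any other admissible sequence for $\xi + \td\alpha(a)$ differs from $\{\alpha_j + \tau_A^j a\}$ by elements ranging over a fixed finite subset of $\Z^k[A\inv]$, since $D$ is compact and $\td\alpha$ is a proper embedding; the two corresponding pseudo-orbits therefore stay within bounded $d$-distance for all $j\in\Z$, so by the uniqueness clause of Lemma \ref{uniqueCont} they are shadowed by the same point. This forces $\Psi(\xi + \td\alpha(a)) = h(\Psi(\xi))$.

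Granting equivariance, properness follows by a standard argument. Choose a compact $D_0\subset\wtd\sol$ with $\orbit_{\wtd N}(D_0) = \wtd\sol$, which exists because $\td q\colon\wtd\sol\to\sol_A$ is a covering of the compact space $\sol_A$. Let $K\subset\wtd\Lambda$ be compact. For $\xi\in\Psi\inv(K)$, write $\xi = \eta + \td\alpha(a)$ with $\eta\in D_0$; then $\Psi(\xi) = \wtd\Phi\inv(a)(\Psi(\eta))\in\wtd\Phi\inv(a)(\Psi(D_0))$, and since also $\Psi(\xi)\in K$ the deck transformation $\wtd\Phi\inv(a)$ lies in the set $\{g\in G\mid g(\Psi(D_0))\cap K\neq\emptyset\}$, which is finite by proper discontinuity of $G$ on $\wtd\Lambda$ (both $\Psi(D_0)$ and $K$ being compact). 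Hence $\Psi\inv(K)\subset\bigcup_{a\in F}(D_0 + \td\alpha(a))$ for a finite $F\subset\Z^k[A\inv]$, a compact subset of $\wtd\sol$; being closed by continuity of $\Psi$, $\Psi\inv(K)$ is compact.

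The main obstacle is the equivariance step: one must chase the three compatible algebraic identities — Proposition \ref{2STAR}, $\wtd\Phi\circ f_* = \tau_A\circ\wtd\Phi$, and the lift relation for $\td f$ — through the pseudo-orbit defining $\Psi$, and then combine the abelianness of $G$, its isometric action on $(\wtd\Lambda, d)$, and the uniqueness half of the shadowing lemma to conclude that translating $\xi$ translates the shadowing point accordingly (and hence that the construction is independent of the auxiliary choices). Once this is in place, cocompactness and proper discontinuity do the rest.
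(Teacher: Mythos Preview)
Your proof is correct and follows essentially the same approach as the paper: establish the equivariance $\Psi(\xi+\td\alpha(a)) = (\wtd\Phi\inv(a))(\Psi(\xi))$, observe that $\Psi$ therefore descends to a continuous map $h\colon\sol_A\to\Lambda$, and deduce properness from cocompactness of the lattice actions. The paper simply asserts the equivariance (``we clearly have'') and the passage to properness in a single sentence, whereas you have carefully unpacked both steps --- verifying equivariance via Proposition~\ref{2STAR}, the intertwining $\wtd\Phi\circ f_* = \tau_A\circ\wtd\Phi$, the lift relation for $\td f$, abelianness of $G$, and the uniqueness from Lemma~\ref{uniqueCont}, and then giving the standard fundamental-domain argument for properness.
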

\begin{proof}
For $\xi\in \wtd \sol$ and $\alpha \in \Z^k[A\inv]$ we clearly have $\Psi(\xi+\alpha) = (\wtd \Phi \inv (\alpha))(\Psi(\xi))$, hence the map
$\Psi\colon \wtd \sol \to \wtd \Lambda$ descends to a continuous map $h\colon\sol_A \to \Lambda$.  
\end{proof}

Since $\Psi$ is proper, we have that $A$ is hyperbolic.  Thus as in Remark  \ref{rem:UseLemma}, Lemma \ref{lem:Shadowing} applies to $\wtd \sol$.  Hence, given a fundamental domain $D\subset \wtd \Lambda$, and  $x\in \wtd \Lambda$, we choose $\{g_i\}$  so that $\td f^i(x)\in g_i(D)$.  Then as above we define $\Psi'(x)$ to be the unique point $\xi\in \wtd \sol$ so that $\xi$ shadows the pseudo orbit
\[\{\wtd \Phi(g_i)(e)\}\]
where $e$ is the identity in $\wtd \sol$.  We thus obtain a map $\Psi'\colon \wtd \Lambda \to \wtd \sol$.  

One easily verifies 
\begin{enumerate}
\item $\Psi$ and $\Psi'$ are inverses;  
\item the diagram
\[\xymatrix{\wtd \sol \ar[r]^{\td \sigma} \ar[d]_{\Psi}&\wtd \sol \ar[d]^{\Psi}\\ \wtd \Lambda\ar[r]_{\td f}&\wtd \Lambda}\]
commutes;
\item $\Psi$ and $\Psi'$ intertwine the covering actions of $G$ and $\Z^k[ A\inv]$, that is,
\begin{itemize}
\item[---] $\Psi(\alpha (\xi)) = (\wtd \Phi \inv (\alpha))(\Psi (\xi))$ for all $\alpha \in \Z^k[ A\inv], \xi \in \wtd \sol$;
\item[---] $\Psi'(g (x)) =\Psi' (x) +  \wtd \Phi (g)$ for all $g \in G, x \in \wtd \Lambda$.
\end{itemize}
\end{enumerate}
Thus the homeomorphism $\Psi\colon \wtd \sol \to \wtd \Lambda$ induces a homeomorphism $h\colon \sol_A\to \Lambda $ such that the diagram 
\[\xymatrix{\sol_A \ar[r]^{ \sigma_A} \ar[d]_{h}& \sol_A \ar[d]^{h}\\ \Lambda\ar[r]_{ f}& \Lambda}\]
commutes. 
\end{proof}

\subsection{Proof of Theorem \ref{thm:main}}
\begin{proof}[Proof of Theorem \ref{thm:main}]
By Proposition \ref{prop:PDaction}, Corollary \ref{cor:GPS}, and Proposition \ref{prop:MainAlg}  if $\Lambda$ is not an expanding attractor, then $\Lambda$ is homeomorphic to a toral solen\-oid, and $\restrict f \Lambda$ is conjugate to a solen\-oidal automorphism. 

Furthermore we have $\stab x \cap \Lambda$ is perfect for every $x\in \Lambda$.  
Thus if $\stab x \cap \Lambda$ is locally connected, $\Lambda$ cannot be expanding, which by the above implies $\Lambda$ is homeomorphic to a solen\-oid.  However, the only  locally connected toral solen\-oids are in fact tori, that is, $\sol_A$ for $\det A = \pm 1$. 
\end{proof}


\section{Proof of Corollary \ref{cor:dim3} and Theorem \ref{thm:HA3D}}
The following observation is straightforward (see, for example, \cite[Lemma 2.4]{Aaron}).
\begin{lemma}\label{lem:trivialsplitting}
Let $\Lambda\subset M$ be a  compact hyperbolic set for a diffeomorphism $g\colon M\to M$. The  points $y \in \Lambda$ with the property that $E^\sigma(y) = \{0\}$ for some $\sigma \in \{s,u\}$ are periodic and isolated in $\Lambda$.  In particular if $\Lambda$ is transitive and contains such a point then $\Lambda$ is finite.  \end{lemma}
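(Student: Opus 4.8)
The plan is first to reduce to the case $\sigma=s$: the case $\sigma=u$ follows by replacing $g$ with $g^{-1}$, which interchanges $E^s$ and $E^u$ and preserves the notions ``periodic'' and ``isolated in $\Lambda$''. Set
\[U:=\{y\in\Lambda\mid E^s(y)=\{0\}\}.\]
Since the splitting $T_xM=E^s(x)\oplus E^u(x)$ is continuous on $\Lambda$, the dimension $\dim E^s(y)$ is locally constant on $\Lambda$, so $U$ is clopen in $\Lambda$; and since $Dg_y$ maps $E^s(y)$ isomorphically onto $E^s(g(y))$, the set $U$ is $g$-invariant. Thus $U$ is a compact $g$-invariant subset of $\Lambda$ that is also open in $\Lambda$. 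Granting that $U$ is \emph{finite}, every point of $U$ is periodic (its orbit is a finite invariant set) and is isolated in $\Lambda$ (it is a point of a finite set that is open in $\Lambda$), which is exactly the assertion of the lemma. So it suffices to prove that $U$ is finite.

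The next step is to extract a contraction from the local product structure of $\Lambda$ (which holds for any compact hyperbolic set, with no local maximality needed). Fix constants $0<\delta<\eta\le\epsilon$ from the hyperbolic structure so that $d(x,y)<\delta$ forces $\locUnst[\eta]x\cap\locStab[\eta]y$ to be a single point and so that the asymptotic estimate \eqref{eqn:asymU} holds along local unstable manifolds of size $\le\epsilon$. If $p,q\in U$ with $d(p,q)<\delta$, then $\locStab[\eta]q$ is a $0$-dimensional embedded disk, i.e.\ the single point $\{q\}$, so the singleton $\locUnst[\eta]p\cap\locStab[\eta]q$ must equal $q$; hence $q\in\locUnst[\eta]p$, and \eqref{eqn:asymU} gives $d(g^{-n}(p),g^{-n}(q))\le\mu^{-n}d(p,q)$ for all $n\ge0$. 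In other words $\restrict{g^{-1}}{U}$ is a \emph{local contraction} (scale $\delta$, factor $\mu^{-1}<1$), and equivalently $\restrict{g}{U}$ is \emph{positively expansive} with expansiveness constant $\delta$: if $d(g^{n}(p),g^{n}(q))<\delta$ for every $n\ge0$, then applying the contraction estimate to consecutive iterates yields $\delta>d(g^{n}(p),g^{n}(q))\ge\mu^{n}d(p,q)$ for all $n$, forcing $p=q$.

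Now $\restrict{g}{U}$ is a positively expansive homeomorphism of the compact metric space $U$, and I would conclude that $U$ is finite by the classical fact that a compact metric space admitting a positively expansive homeomorphism must be finite. (If one prefers to avoid citing this, the same conclusion can be reached by noting that $\|Dg^{-1}\|\le\kappa<1$ on $U$ in the adapted metric, choosing a neighborhood $W$ of $U$ on which $g^{-1}$ is a genuine metric contraction with $g^{-1}(\overline W)\subset\int(W)$, and arguing that the locally maximal hyperbolic set $\Lambda':=\bigcap_{n\ge0}g^{-n}(\overline W)\supseteq U$ is a finite union of periodic orbits.) This proves the lemma, via the first paragraph. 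For the final assertion: if $\Lambda$ is transitive with a dense orbit $\{g^{n}(x)\mid n\in\Z\}$ and $y\in U$, then the nonempty open set $\{y\}\subset\Lambda$ meets this orbit, so $g^{n}(x)=y$ for some $n$; hence $x$ lies on the finite orbit of $y$ and $\Lambda=\overline{\{g^{n}(x)\mid n\in\Z\}}$ is finite.

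The only genuinely substantive step is passing from positive expansiveness to finiteness of $U$; everything else is bookkeeping with the local hyperbolic product structure, and, as noted, local maximality of $\Lambda$ is never used. The one point to treat carefully is the compatibility of the scales $\delta,\eta,\epsilon$ (shrink $\eta$ if necessary) so that the singleton intersection property and the estimate \eqref{eqn:asymU} can both be applied to the same pair of points of $U$.
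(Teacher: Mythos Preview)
Your argument is correct. The paper does not actually supply a proof of this lemma; it simply remarks that the observation is straightforward and refers the reader to \cite[Lemma 2.4]{Aaron}, so there is no in-paper approach to compare yours against. Your route---noting that on the clopen invariant set $U=\{y:E^s(y)=0\}$ the local stable manifolds degenerate to single points, so that nearby points of $U$ must lie on each other's local unstable manifolds, and then deducing finiteness of $U$ from positive expansiveness of $\restrict g U$---is a clean, self-contained argument that uses nothing beyond the continuity of the splitting and the standard local estimates for hyperbolic sets.
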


We now prove the remaining results from the introduction.
\begin{proof}[Proof of Corollary \ref{cor:dim3}]
By \cite[Theorem 3]{MR0286134} and by passing to the inverse if necessary we may assume that $\Lambda$ is an attractor for $f$. We thus have $\dim \restrict {E^u} {\Lambda} \le 2$.    Furthermore, by Lemma  \ref{lem:trivialsplitting}, $\dim \restrict {E^u} {\Lambda} = 0$ would  imply that $\dim(\Lambda) = 0$, whence we have $\dim \restrict {E^u} {\Lambda} \ge 1$.  
By the spectral decomposition and by passing to an appropriate iterate $f^n$ we may assume that $\Lambda$ is topologically mixing for $f^n$.

If $\dim \restrict {E^u} {\Lambda} =2 $  then $\Lambda$ is a co\-dimen\-sion-1 expanding attractor.  
If $\dim \restrict {E^u} {\Lambda} =1$ then by Theorem \ref{thm:main} we have that $\Lambda$ is an embedded toral solen\-oid.  By  \cite[Theorem 1]{MR2415057}, no proper 2-dimensional solen\-oid may be embedded in a closed orientable 3-man\-ifold.  

If needed, we first  argue on a double cover in the case $M$ is non-orientable.  Also if needed we may pass to a compact manifold with boundary $N$ containing $\Lambda$, and glue a second copy of $N$ along the boundary to obtain a closed manifold containing $\Lambda$.  
We may then apply \cite[Theorem 1]{MR2415057} and thus obtain that $\Lambda$ is locally connected, hence $\Lambda \cong \T^2$ and $f^n$ is conjugate to a toral automorphism.  
\end{proof}

\begin{proof}[Proof of Theorem \ref{thm:HA3D}]
We note that for a hyperbolic attractor we always have ${\dim \restrict {E^u} \Lambda \le \dim \Lambda}$. 

If $\dim \Lambda =0$ we have that $\dim \restrict {E^u} \Lambda= 0 $ hence Lemma   \ref{lem:trivialsplitting}  implies that every $x\in \Lambda$ is periodic and isolated;  hence we must have $\Lambda = \{x\}$ in order for $\Lambda$ to be topologically mixing.  
If $\dim \Lambda = 1$ then Lemma  \ref{lem:trivialsplitting}  implies  $\dim \restrict {E^u}\Lambda = 1 $; hence $\Lambda$ is an expanding \mbox{1-dimensional} attractor and  $\Lambda$ is conjugate to the shift map on a generalized \mbox{$1$-solen\-oid} by Theorem \ref{thm:Williams}.

If $\dim \Lambda = 2$, Lemma \ref{lem:trivialsplitting} implies  $1\le \dim \restrict {E^u}\Lambda \le 2 $.
When $\dim \restrict {E^u}\Lambda = 1$ the fact that $\Lambda$ is topologically mixing implies $\Lambda$ is connected, whence $\Lambda$ is homeomorphic to $\T^2$ and $\restrict f \Lambda$ is conjugate to a hyperbolic toral automorphism by Corollary \ref{cor:dim3}.  When $\dim \restrict {E^u}\Lambda = 2 $ then $\Lambda$ is a co\-dimen\-sion-1 expanding attractor by definition.  

Finally, when $\dim \Lambda = 3$ then Lemma  \ref{lem:trivialsplitting}  implies  $1\le \dim \restrict {E^u}\Lambda \le 2 $.  Furthermore, \cite[Theorem 4.3]{MR0006493} implies $\Lambda$ has non-empty interior, which by \cite[Theorem 1]{MR2199438} implies $\Lambda = M$.  But then the result follows from Theorem \ref{thm:fn}.
\end{proof}

\end{document}